\documentclass[a4paper,10pt]{amsart}
\usepackage[english]{babel}
\usepackage[utf8]{inputenc}
\usepackage[T1]{fontenc}
\usepackage{csquotes}
\usepackage[style=numeric,
	useprefix,%
	giveninits=true,%
	hyperref,%
	doi=false,%
	url=false,%
	isbn=false,%
	backend=bibtex,%
	maxbibnames=99%
	]{biblatex}
\bibliography{./BIB}

\usepackage{amssymb}
\usepackage{mathrsfs}
\usepackage{hyperref}
\usepackage[usenames,dvipsnames]{xcolor}
\hypersetup{colorlinks,%
citecolor=Black,%
filecolor=Black,%
linkcolor=Black,%
urlcolor=Black}
\usepackage{enumitem}	
\usepackage{mathtools}	
\mathtoolsset{showonlyrefs=true} 
\usepackage[normalem]{ulem}

\newcommand{\scr}[1]{\mathscr{#1}}
\newcommand{\frk}[1]{\mathfrak{#1}}
\newcommand{\N}{\mathbb{N}}	
\newcommand{\R}{\mathbb{R}}	
\newcommand{\C}{\mathbb{C}}	
\newcommand{\K}{\mathbb{K}}	
\newcommand{\Id}{\mathrm{Id}}	
\newcommand{\Diff}{\mathrm{D}}	
\newcommand{\diff}{\mathrm{d}}	
\newcommand{\did}{\,\mathrm{d}} 
\newcommand{\de}{\partial}		
\renewcommand{\div}{\operatorname{div}}	
\newcommand{\THEN}{\Rightarrow}	
\newcommand{\Lin}{\mathtt{Lin}}
\newcommand{\Ad}{\operatorname{Ad}}
\newcommand{\ad}{\operatorname{ad}}
\newcommand{\tr}{\operatorname{tr}}
\newcommand{\Ann}{\operatorname{Ann}}
\newcommand{\Hom}{\operatorname{Hom}}
\newcommand{\grad}{\nabla}
\newcommand{\laplacian}{\triangle}
\newcommand{\Lie}{\mathrm{Lie}}
\newcommand{\vol}{\mathrm{vol}}
\newcommand{\Energy}{\scr E}		
\newcommand{\Ee}{V(H)}			
\newcommand{\Qq}{\frk q}			
\newcommand{\mG}{\frk m_G}			
\newcommand{\mH}{\frk m_H}			
\newcommand{\kae}{\kappa_{\Ee}}
\newcommand{\kaq}{\kappa_{\Qq}}
\newcommand{\Tt}{\scr T}			

\theoremstyle{plain}
\newtheorem{proposition}{Proposition}[section]
\newtheorem{theorem}[proposition]{Theorem}
\newtheorem{lemma}[proposition]{Lemma}
\newtheorem{corollary}[proposition]{Corollary}
\newtheorem{thm}{Theorem}[section]

\theoremstyle{definition}
\newtheorem{definition}[proposition]{Definition}
\newtheorem{remark}[proposition]{Remark}

\theoremstyle{remark}
\newtheorem{question}{Question}

\title[Harmonic morphisms]{Harmonic morphisms\\ of sub-Riemannian Lie groups}
\author[Nicolussi Golo]{Sebastiano Nicolussi Golo}
\address[Nicolussi Golo]{Independent Researcher}
\author[Pinamonti]{Andrea Pinamonti}
\address[Andrea Pinamonti]{Department of Mathematics, University of Trento, Via Sommarive 14, 38123 Povo (Trento), Italy}
	\email[Andrea Pinamonti]{Andrea.Pinamonti@unitn.it}
\author[Warhurst]{Ben Warhurst}
\address[Ben Warhurst]{Institute of Mathematics, University of Warsaw, ul. Banacha 2, 02-097 Warsaw, Poland}
\email[Ben Warhurst]{b.warhurst@mimuw.edu.pl}
\date{\today. \IfFileExists{./.gittex}{\input{./.gittex}}{}}

\subjclass[2020]{
35B06, 
53C17, 
35H20, 
53C30, 
22F30, 
22E25.} 

\keywords{sub-Laplacian, symmetries of PDEs, sub-Riemannian Lie group, Carnot group, Heisenberg group.}

\begin{document}
\maketitle

\begin{abstract}
	We prove that harmonic morphisms between sub-Riemannian Lie groups are smooth, are symmetries of the sub-Riemannian Laplacian and  are conformal submersions that satisfy a particular PDE.
	Moreover, we give some partial results for harmonic morphisms beyond Lie groups.
	We show the existence of harmonic coordinates for harmonically homogeneous spaces, and that smooth harmonic morphisms are symmetries of the Laplacian in all sub-Riemannian manifolds.
	Finally, we compute the first variation of the horizontal energy along a harmonic morphism of sub-Riemannian Lie groups:
	it is given by a linear form on the Lie algebra of the target, the modular mismatch, which vanishes for Riemannian and Carnot targets, but surprisingly not in general.
	Therefore, unlike in the Riemannian case, harmonic morphisms of sub-Riemannian Lie groups need not be harmonic maps.
\end{abstract}

\setcounter{tocdepth}{2}
\phantomsection
\addcontentsline{toc}{section}{Contents}
\tableofcontents

\section{Introduction}

A \emph{harmonic morphism} is a continuous map that pulls back harmonic functions to harmonic functions.
Harmonic morphisms between Riemannian manifolds have been studied for a long time: we invite the reader to consult Baird and Wood's monograph \cite{zbMATH02019737} and Gudmundsson's “The Bibliography of Harmonic Morphisms”.\footnote{Available
at \url{https://www.matematik.lu.se/matematiklu/personal/sigma/harmonic/bibliography.html}
}

Harmonic morphisms between Riemannian manifolds
 are smooth maps that solve a certain PDE.
Fuglede \cite{zbMATH03530589} and Ishihara \cite{MR545705} proved that
{\it a smooth map between Riemannian manifolds is a harmonic morphism if and only if it is both a harmonic map and a conformal submersion map}.

A \emph{harmonic map} is a smooth map that is a critical point of the Dirichlet energy (see \cite[Definition 3.3.1]{zbMATH02019737}).
Harmonic functions are real-valued harmonic maps.
Harmonic maps between Riemannian manifolds are characterized by the vanishing of the \emph{tension field} (see \cite[Definition 3.2.4]{zbMATH02019737} and \cite[Theorem 3.3.3]{zbMATH02019737}).

A map $\phi:M\to N$ between Riemannian manifolds is a \emph{conformal submersion} (a.k.a., \emph{semiconformal map}) if, for every $x\in M$, the differential $\diff\phi(x):T_xM\to T_{\phi(x)}N$ is either zero, or surjective with conformal restriction $\diff\phi(x) |_{\ker(\diff\phi(x))^\perp}$ from the subspace of $T_xM$ orthogonal to the kernel of $\diff\phi(x)$ onto $T_{\phi(x)}N$
(see \cite[Definition 2.4.2]{zbMATH02019737}).
Conformal maps are conformal submersions, but also orthogonal projections and constant maps are conformal submersions.
See Section~\ref{sec670d0abe} for the definition for maps between sub-Riemannian Lie groups.
We stress that we include the degenerate case $\diff\phi(x)=0$, even though the map is not a submersion at those points.

Our goal is to extend the Fuglede--Ishihara characterization to sub-Riemannian Lie groups; see Theorem~\ref{thm6a7ae38c}.
Subelliptic and pseudoharmonic maps have been studied in several settings, including Hörmander systems, pseudohermitian and contact manifolds, and sub-Riemannian Lie groups; see, among others, \cite{zbMATH05019613, BarlettaDragomirUrakawa2001, Dong2021, zbMATH05578937, zbMATH01198583, zbMATH07887823}.
Harmonic morphisms associated with subelliptic operators were investigated in \cite{zbMATH05019613,zbMATH05578937}, and, more recently, in the pseudohermitian and Fefferman settings in \cite{DragomirEspositoLoubeau2024}.

A \emph{(measured) sub-Riemannian manifold} is determined by $(M,\tilde V,\rho,\mu)$
where $M$ is a smooth manifold, $\tilde V\subset TM$ is a bracket-generating subbundle, $\rho$ is a scalar product on $\tilde V$ and $\mu$ is a smooth measure on $M$.
We call $\tilde V$ the \emph{horizontal distribution}, and a curve $\gamma$ in $M$ is a \emph{horizontal curve} if it is absolutely continuous and $\dot\gamma(t)\in\tilde V$ for almost every $t$.
The measured sub-Riemannian structure determines a second-order hypoelliptic differential operator $\laplacian_M$, the \emph{sub-Laplacian} of $M$; see Section~\ref{subs6a7c4364}.
When $\tilde V=TM$, $\rho$ is a Riemannian metric, and $\mu$ is its Riemannian volume measure, $\laplacian_M$ is the Laplace--Beltrami operator.
Sub-Riemannian \emph{harmonic functions} are (smooth) functions $u:M\to\R$ with $\laplacian_Mu=0$.

Apart from continuity, the definition of harmonic morphism does not assume any a priori regularity of the map; our first two results concern the regularity of harmonic morphisms.

An application of~\cite{MR3912638} allows us to prove that harmonic morphisms are smooth whenever they preserve horizontal curves.

\begin{thm}\label{thm69959b8b}
	Let $M$ and $N$ be measured sub-Riemannian manifolds and let $F:M\to N$ be a continuous harmonic morphism.
	Suppose that $F$ maps horizontal curves to horizontal curves.
	Then $F$ is $C^\infty$-smooth.
\end{thm}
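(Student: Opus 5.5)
The plan is to reduce smoothness of $F$ to smoothness of real-valued functions, using hypoellipticity of the sub-Laplacians. Fix $p\in M$ and set $q=F(p)$. If $u$ is harmonic on a neighbourhood $W$ of $q$, then $u\circ F$ is harmonic on $F^{-1}(W)$ by the definition of harmonic morphism. Since $\laplacian_M$ is hypoelliptic, $u\circ F$ is then $C^\infty$. So it suffices to find enough local harmonic functions on $N$ near $q$ that the smoothness of all the compositions $u\circ F$ forces the smoothness of $F$.

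\textbf{Step 1: horizontal harmonic coordinates.} Construct harmonic functions $u_1,\dots,u_r$ on a neighbourhood $W$ of $q$, where $r=\operatorname{rank}\tilde V_N$, whose differentials restricted to the horizontal distribution are linearly independent at $q$. That is, the covectors $\diff u_i(q)|_{\tilde V_{N,q}}$ form a basis of $\tilde V_{N,q}^*$. I would do this by perturbation.
\begin{itemize}
\item Take privileged coordinates at $q$ and the degree-one coordinate functions $x_1,\dots,x_r$.
\item Since $\laplacian_N x_i$ is bounded near $q$, solve a local Dirichlet problem $\laplacian_N w_i=\laplacian_N x_i$ on a small regular domain, with $w_i=0$ on the boundary.
\item Use the scaling of the sub-Laplacian on small balls, via the nilpotent approximation, to show that $w_i$ and its horizontal derivatives at $q$ are small relative to those of $x_i$.
\item Set $u_i=x_i-w_i$.
\end{itemize}
A transversality or generic-choice argument can also replace this smallness estimate.

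Step 1 only controls horizontal derivatives. This is why the hypothesis on horizontal curves is needed. In general there need not be harmonic functions giving full coordinates near $q$; the paper obtains these only for harmonically homogeneous spaces.

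\textbf{Step 2: invoking \cite{MR3912638}.} The maps $u_i\circ F$ are smooth. Along any horizontal curve $\gamma$ in $M$, the curve $F\circ\gamma$ is horizontal in $N$, and its horizontal velocity is determined by the derivatives $\frac{\diff}{\diff t}(u_i\circ F\circ\gamma)$, by the nondegeneracy from Step 1. Hence $F$ maps horizontal curves to horizontal curves with a control that is smooth in terms of the horizontal derivatives of the smooth functions $u_i\circ F$. The result of \cite{MR3912638} states, roughly, that such a horizontal-curve-preserving map, whose horizontal differential is expressed through smooth data, is $C^\infty$. The idea behind it is that the full differential is recovered from the horizontal one by bracket generation, since vertical directions are reached by concatenating horizontal flows. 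I would verify its hypotheses for $F$ in these terms and conclude that $F$ is smooth near $p$.

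The main obstacle is Step 1 together with matching its output to the exact hypotheses of \cite{MR3912638}. One needs harmonic functions whose horizontal differentials are nondegenerate at $q$, with estimates uniform enough that $F$ becomes a map whose horizontal components are smooth. If the Dirichlet-problem perturbation is delicate, I would instead use a harmonic approximation (Runge-type) argument for solutions of $\laplacian_N$, applied to the horizontal-linear polynomials of the nilpotent approximation.
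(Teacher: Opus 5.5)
Your proposal follows the paper's proof. The paper takes harmonic first-layer coordinates $y_1,\dots,y_r$ at $q$ directly from \cite[Theorem~4.4]{MR3912638}, which is exactly your Step~1, so your perturbation/scaling sketch can be replaced by that citation; it then gets smoothness of $y_k\circ F$ by hypoellipticity and concludes with \cite[Proposition~4.14]{MR3912638}, using the horizontal-curve hypothesis as in your Step~2. Note that the cited Theorem~4.4 is proved for equiregular manifolds, which is why the paper states the result as Theorem~\ref{thm6a7ee1f4} with that extra assumption, so your nilpotent-approximation argument should be run under it as well, or else you must justify the uniform estimates at non-equiregular points.
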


Our second regularity result relies on some particular assumptions concerning the target space.
A sub-Riemannian manifold $(M,\tilde V,\rho,\mu)$ is said to be \emph{harmonically homogeneous}
if there is a transitive Lie group action on $M$ by harmonic morphisms; see Section~\ref{subs6a7eff81}.
For instance, isometries that preserve the measure $\mu$ are harmonic morphisms.
So, sub-Riemannian Lie groups or, more generally, isometrically homogeneous sub-Riemannian manifolds, are harmonically homogeneous.

On harmonically homogeneous sub-Riemannian manifolds we can prove that harmonic coordinates exist, and therefore that harmonic morphisms are smooth.
We stress that the existence of harmonic coordinates on sub-Riemannian manifolds is an open problem (see \cite{MR3912638,MR3553396}) and that the definition of harmonic morphism does not assume any a-priori regularity.

Theorems~\ref{thm6a7b21cf} and~\ref{thm6a7b21d7} are proved in Section~\ref{subs6a7b219a}.

\begin{thm}\label{thm6a7b21cf}
	Let $M$ be a measured sub-Riemannian manifold that is harmonically homogeneous.
	Then every point of $M$ has harmonic coordinates, that is, a system of local coordinates whose coordinate functions are harmonic.
\end{thm}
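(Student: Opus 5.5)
The idea is to produce harmonic functions by transporting a single one along the transitive group action, and then pick $n=\dim M$ of them whose differentials are independent at a given point.

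Let $G$ be a Lie group acting transitively on $M$ by harmonic morphisms, and fix $p\in M$.

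\emph{Step 1: a single non-degenerate harmonic function.} By hypoellipticity of $\laplacian_M$ and the solvability of the Dirichlet problem on small regular domains (e.g.\ Bony's theory for Hörmander operators), there are harmonic functions on a neighbourhood $U$ of any point with prescribed boundary values. Approximating a coordinate function $x^i$ by harmonic functions in $C^1$ near some point $q$ is not available in general, and this is exactly the open problem. Instead I only aim for one harmonic function $u$ on some open set $U$ with $\diff u(q)\neq0$ at some $q\in U$. Such a $u$ exists: otherwise every local harmonic function would be locally constant. Solving the Dirichlet problem with non-constant continuous boundary data then gives a contradiction with the maximum principle.

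\emph{Step 2: transport by the group.} For each $g\in G$, the map $x\mapsto u(g\cdot x)$ is harmonic on $g^{-1}U$, because $g$ acts as a harmonic morphism. Since the action is transitive, for every point $p$ and every covector direction obtained from the orbit of $\diff u(q)$ under the isotropy-twisted action, we get harmonic functions near $p$. Concretely, choose $g_0$ with $g_0 p=q$. The differentials at $p$ of $u\circ g$, with $g$ close to $g_0$ and $gp$ ranging near $q$, are $\diff(g)^*_p\,\diff u(gp)$. I then consider the linear span $W_p\subset T^*_pM$ of the differentials at $p$ of all harmonic functions defined near $p$. This span is $G$-equivariant, so $\dim W_p$ is constant in $p$ and $W=\bigcup_p W_p$ is a smooth subbundle of $T^*M$. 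Step 1 gives $W\neq0$.

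\emph{Step 3: $W=T^*M$.} Suppose $W\neq T^*M$. Then the annihilator $W^\perp\subset TM$ is a nonzero distribution tangent to the level sets of all local harmonic functions. Moreover, $W^\perp$ is involutive: pick harmonic $u_1,\dots,u_k$ near $p$ with independent differentials spanning $W_p$. By constancy of rank they span $W$ nearby, so $W^\perp$ is locally the common kernel of $\diff u_1,\dots,\diff u_k$, and its leaves are the joint level sets $\{u=c\}$. Every local harmonic function is therefore locally constant on these leaves. Now take a small regular domain $\Omega$ and continuous boundary data $\varphi$ that differ at two boundary points on the same leaf, arranged so that the leaf meets $\Omega$ near those points. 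The Perron solution is continuous up to the boundary at regular points and constant on the leaf. This contradicts the assumption that it attains the two different values of $\varphi$. Hence $W_p=T^*_pM$.

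\emph{Step 4: conclusion.} Choose harmonic $u_1,\dots,u_n$ near $p$ whose differentials at $p$ form a basis of $T^*_pM$. By the inverse function theorem, $(u_1,\dots,u_n)$ is a system of harmonic coordinates near $p$. The smoothness of the $u_i$ used here comes from hypoellipticity.

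The main obstacle is Step 3: making the boundary-regularity argument rigorous. One needs to choose $\Omega$ so that a leaf of $W^\perp$ reaches two regular boundary points while staying inside $\Omega$ near them. A fallback is to use a Green function $G_\Omega(\cdot,y)$. It is harmonic off $y$ and has a pole at $y$ in every direction, so it cannot be constant along a leaf through $y$. Constancy on leaves would therefore also force constancy along $W^\perp$ near the pole, which is incompatible with its singularity.
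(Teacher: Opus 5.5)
Your route is essentially the paper's. Your $W^\perp$ is exactly the paper's $\tilde W_p=\{v\in T_pM:\diff u(p)[v]=0 \text{ for every harmonic } u \text{ near } p\}$. Its constant rank comes from equivariance under the transitive action, and the contradiction comes from solving the Dirichlet problem on a regular domain. Your use of $\diff u_1,\dots,\diff u_k$ to see that $W$ is smooth is a clean justification of a point the paper only asserts; the integrability of $W^\perp$ is not actually needed.

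The one loose point is the step you flag yourself, and it has a short fix. As you phrase it, boundary data differing at two boundary points on the same leaf is not enough. The two points must lie in the closure of a single connected piece of the leaf contained in the domain; otherwise the harmonic extension need not take equal values there.

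The paper handles this with a curve instead of a leaf:
\begin{itemize}
\item Take a nonzero $X\in\Gamma(\tilde W)$, a point $p$ with $X(p)\neq0$, and the integral curve $\gamma$ of $X$ with $\gamma(0)=p$.
\item By a flow box, take a smooth $f$ on an open set $\Omega'\ni p$ with $Xf\equiv1$.
\item Take a regular domain $U\Subset\Omega'$ containing $p$, small enough that $\gamma$ leaves $U$ in both time directions.
\item Let $(t_-,t_+)$ be the connected component of $\gamma^{-1}(U)$ containing $0$, so that $\gamma(t_\pm)\in\de U$.
\end{itemize}
The harmonic extension $u$ of $f|_{\de U}$ satisfies $Xu=0$ in $U$, so it is constant on $\gamma((t_-,t_+))$. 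By continuity on $\bar U$ this gives $u(\gamma(t_-))=u(\gamma(t_+))$, whereas $f(\gamma(t_+))-f(\gamma(t_-))=t_+-t_->0$.

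There is no boundary-regularity issue. In a Brelot space, a regular domain comes by definition with harmonic extensions continuous on all of $\bar U$, and such domains form a basis (Proposition~\ref{prop69947463}).

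For the same reason your Step 1 is redundant: the curve argument applies verbatim when $W=0$, i.e.\ $W^\perp=TM$. The Green-function fallback is also unnecessary, and it would need extra input, namely the existence of the Green function and its blow-up at the pole.
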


\begin{thm}\label{thm6a7b21d7}
	Let $M$ and $N$ be measured sub-Riemannian manifolds and let $F:M\to N$ be a harmonic morphism.
	Suppose that $N$ is harmonically homogeneous.
	Then $F$ is $C^\infty$-smooth.
\end{thm}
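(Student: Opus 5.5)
The plan is to derive Theorem~\ref{thm6a7b21d7} directly from Theorem~\ref{thm6a7b21cf}. By that theorem, every point $q\in N$ has harmonic coordinates: an open neighbourhood $W\ni q$ and harmonic functions $u_1,\dots,u_n:W\to\R$ such that $u=(u_1,\dots,u_n):W\to\R^n$ is a diffeomorphism onto its image. Smoothness is a local property, so fix $p\in M$ and set $q=F(p)$. Since $F$ is continuous, $U:=F^{-1}(W)$ is an open neighbourhood of $p$.

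Next, I use the definition of harmonic morphism, which is local: a harmonic function on an open set $W\subset N$ pulls back to a harmonic function on the open set $F^{-1}(W)$. Applied to the coordinates, each $u_j\circ F$ is harmonic on $U$. Since $\laplacian_M$ is hypoelliptic (Hörmander), harmonic functions are $C^\infty$, even if they are a priori only continuous or distributional solutions. Hence each $u_j\circ F$ is smooth on $U$. Therefore $F|_U=u^{-1}\circ(u_1\circ F,\dots,u_n\circ F)$ is a composition of smooth maps, so it is smooth near $p$. As $p$ was arbitrary, $F$ is $C^\infty$.

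The main point to check is that the notion of harmonic morphism in Section~\ref{subs6a7eff81} really is local. It must apply to harmonic functions defined only on open subsets of $N$, not just to global harmonic functions on $N$. The harmonic coordinates from Theorem~\ref{thm6a7b21cf} are only local, so this matters. If the definition turns out to be global, I would first prove a localization lemma. The first approach I would try is to approximate local harmonic functions by global ones, using the transitive group of harmonic morphisms. In any case, the real difficulty is the existence of harmonic coordinates, and that is supplied by Theorem~\ref{thm6a7b21cf}.
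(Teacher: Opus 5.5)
Your proposal is correct and is essentially the paper's proof. The paper takes harmonic coordinates $u_1,\dots,u_n$ on a neighbourhood $W$ of $F(p)$ from Theorem~\ref{thm6a7b21cf}, pulls them back to harmonic (hence, by hypoellipticity, smooth) functions $u_k\circ F$ on $F^{-1}(W)$, and concludes because $(u_1,\dots,u_n)$ is a chart. Your concern about locality does not arise: the definition of harmonic morphism in Section~\ref{subs699474ca} (restated in Section~\ref{subs6a7b1f91}) already quantifies over harmonic functions on arbitrary open subsets of $N$, so no localization lemma is needed.
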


Our second set of results describes the structure of harmonic morphisms.
In sub-Riemannian manifolds, we show that harmonic morphisms are symmetries of the sub-Riemannian Laplacian.
Focusing on sub-Riemannian Lie groups, we can give a geometric description of harmonic morphisms.

\begin{thm}\label{thm699465f3}
	Let $M$ and $N$ be measured sub-Riemannian manifolds and let $F:M\to N$ be a $C^k$-map, with $k\ge2$.
	The following statements are equivalent:
	\begin{enumerate}[label=(\roman*)]
	\item\label{thm699465f3_item1}
	$F$ is a harmonic morphism.
	\item\label{thm699465f3_item4}
	There exists $f:M\to\R$ such that, for every  $v\in C^\infty(N)$,
	\begin{equation}\label{eq699465b3}
		\laplacian_M(v\circ F) = f \cdot (\laplacian_Nv)\circ F .
	\end{equation}
	\end{enumerate}
	Moreover, the function $f$ in~\eqref{eq699465b3} is non-negative and of class $C^{k-2}$,
	and~\eqref{eq699465b3} holds for $v\in C^2(N)$.
\end{thm}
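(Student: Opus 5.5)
Both directions rest on a pointwise expansion of $\laplacian_M(v\circ F)$. In local coordinates $y=(y^1,\dots,y^n)$ on $N$, write $F=(F^1,\dots,F^n)$. Writing $\laplacian_M=\sum_j X_j^2+X_0$ with a local $\rho$-orthonormal frame $X_j$ of $\tilde V$ and a first-order correction $X_0$ coming from $\mu$, the chain rule for a $C^2$ function $v$ gives
\begin{equation}
	\laplacian_M(v\circ F) = \sum_{a,b}(\de_a\de_b v)\circ F\cdot \Gamma(F^a,F^b) + \sum_a (\de_a v)\circ F\cdot \laplacian_M F^a ,
\end{equation}
where $\Gamma(g,h)=\sum_j X_jg\,X_jh$ is the carré du champ. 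Since $F\in C^k$ with $k\ge2$, both $\Gamma(F^a,F^b)$ and $\laplacian_M F^a$ are continuous (indeed $C^{k-2}$, $C^{k-1}$ respectively, at worst $C^{k-2}$). Similarly $\laplacian_N v=\sum_{a,b}\sigma^{ab}\de_a\de_b v+\sum_a\beta^a\de_a v$, where $\sigma$ is the (degenerate) cometric of $N$ and $\beta$ is the drift.

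\textbf{(ii)$\Rightarrow$(i).} If $v$ is harmonic on an open $U\subset N$, then \eqref{eq699465b3} gives $\laplacian_M(v\circ F)=0$ on $F^{-1}(U)$. The only caveat is that \eqref{eq699465b3} is stated for $v\in C^\infty(N)$ while harmonic functions are defined locally. I will handle this by localization: multiply $v$ by a cutoff equal to $1$ near a given point, which does not affect the identity near that point. Because the left-hand side is a classical computation for a $C^2$ function, $v\circ F$ is a classical solution. It is therefore a distributional solution, and hence smooth by hypoellipticity.

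\textbf{(i)$\Rightarrow$(ii).} The key is to produce enough local harmonic functions on $N$ with prescribed $2$-jets. Fix $y_0\in N$. I claim that for any quadratic polynomial data $(c,\ell_a,q_{ab})$ satisfying the single linear constraint $\sum\sigma^{ab}(y_0)q_{ab}+\sum\beta^a(y_0)\ell_a=0$, there is a harmonic $v$ near $y_0$ with exactly that $2$-jet at $y_0$. I would obtain this by solving $\laplacian_N w=-\laplacian_N P$ with $w$ vanishing to second order at $y_0$, where $P$ is the polynomial. The plan is to use local solvability of hypoelliptic operators together with a correction argument, or to cite a known jet-approximation result for Hörmander operators (Bony's work). \emph{This is the main obstacle}: in the sub-Riemannian setting one only controls jets in an anisotropic sense. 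A fallback I would try is to prove only what is needed, namely that the linear map $J_{y_0}$ from harmonic germs to $2$-jets at $y_0$ has image exactly the kernel of the constraint functional $L(q,\ell)=\sigma q+\beta\ell$.

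Granting the claim, apply (i) to such $v$ at points $x$ with $F(x)=y_0$. The expansion shows that the linear functional $\Lambda_x(q,\ell)=\sum\Gamma(F^a,F^b)(x)q_{ab}+\sum\laplacian_MF^a(x)\ell_a$ vanishes on $\ker L$. Hence $\Lambda_x=f(x)L$ for a unique scalar $f(x)$, provided $L\neq0$. This holds because $\sigma(y_0)\neq0$, since $\tilde V\neq0$. Thus \eqref{eq699465b3} holds at $x$ for every $C^2$ function $v$, because both sides depend only on the $2$-jet of $v$ at $y_0$.

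Regularity and sign of $f$ follow from this identification. Choose $q$ with $\sigma(y_0)q=1$, for instance $q=\xi\otimes\xi$ with $\xi$ a nonzero covector in the image of $\sigma$, so that $\sigma(y_0)(\xi,\xi)>0$. Then $f(x)=\Gamma(\xi\cdot F,\xi\cdot F)(x)/\sigma(\xi,\xi)\ge0$, a sum of squares. Locally this expresses $f$ as a quotient of $C^{k-2}$ functions with nonvanishing denominator, with $\xi$ chosen smoothly near $y_0$, so $f\in C^{k-2}$. Finally, independence of the chart follows from uniqueness of $f(x)$.
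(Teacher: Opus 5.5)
Your proof of (ii)$\Rightarrow$(i) is fine, and so are the linear-algebra step $\Lambda_x=f(x)L$ and the sign/regularity computation with $q=\xi\otimes\xi$. The gap is the claim on which all of (i)$\Rightarrow$(ii) rests: that harmonic germs at $y_0$ realize every $2$-jet in $\ker L$. You do not prove it, and it is not a technicality. Since $\sigma(y_0)\neq0$, any $1$-jet $\ell$ can be completed to an element $(0,\ell,t\,\xi\otimes\xi)$ of $\ker L$. Your claim would therefore give harmonic functions with arbitrary prescribed differentials at $y_0$, i.e.\ harmonic coordinates at every point of every measured sub-Riemannian manifold. The paper stresses that this is an open problem: the cited results only give $r$ harmonic coordinates ($r$ the rank), under equiregularity, and the paper gets full harmonic coordinates only for harmonically homogeneous spaces (Theorem~\ref{thm6a7ee20d}). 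Bony's work supplies the Brelot structure and the characterization of $C^2$ sub/superharmonic functions, not jet prescription. Your ``correction argument'' is circular: a local solution of $\laplacian_N w=-\laplacian_N P$ has a $2$-jet at $y_0$ that is some uncontrolled element of $\ker L$ (its vertical derivatives are exactly what anisotropic estimates fail to control), and cancelling it needs a harmonic function with that jet. Your fallback is the same claim restated.

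The missing idea is to test with sub- and superharmonic functions instead of harmonic ones, so that the constraint becomes an open condition. A nonconstant harmonic morphism of Brelot spaces pulls back superharmonic functions (Proposition~\ref{prop6947b0c3}; constant maps are trivial, with $f=0$). By Bony, a $C^2$ function is subharmonic if and only if its sub-Laplacian is $\ge0$ (Proposition~\ref{prop69947448}). Since $F\in C^2$, it follows that $\laplacian_N u\ge0$ near $F(x)$ implies $\laplacian_M(u\circ F)\ge0$ near $x$. Now let $u$ be any $C^2$ germ at $y_0=F(x)$ with $\laplacian_N u(y_0)=0$, and take $w=\frac12(\xi\cdot(y-y_0))^2$, so that $\laplacian_N w>0$ near $y_0$. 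For each $\epsilon>0$, $\laplacian_N(u\pm\epsilon w)$ has strict sign $\pm$ on a neighborhood of $y_0$, hence $\pm\laplacian_M((u\pm\epsilon w)\circ F)(x)\ge0$. Letting $\epsilon\to0$ gives $\laplacian_M(u\circ F)(x)=0$, i.e.\ $\ker L\subset\ker\Lambda_x$ on all $2$-jets. This is exactly the inclusion you needed, with no existence theory for harmonic functions, and the rest of your argument then goes through; it is the paper's Proposition~\ref{prop68fbbcbb}.
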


The proof of Theorem~\ref{thm699465f3} is in Section~\ref{sec699472d5}.
Our approach is based on the theory of Brelot harmonic spaces; see~\cite{MR2363343, zbMATH03224930, Fuglede2011, zbMATH05174426, zbMATH00179616}.

Our main result is a geometric characterization of harmonic morphisms between sub-Riemannian Lie groups.
A \emph{sub-Riemannian Lie group} is a Lie group endowed with a left-invariant sub-Riemannian structure.
Sub-Riemannian Lie groups are harmonically homogeneous, via the action of left-translations.

\begin{thm}\label{thm6a7ae38c}
	Let $G$ and $H$ be sub-Riemannian Lie groups, $\Omega\subset G$ open and $F:\Omega\to H$.
	The following statements are equivalent:
	\begin{enumerate}[label=(\roman*)]
	\item\label{thm6a7ae38c_item1}
	$F$ is a harmonic morphism.
	
	\item\label{thm6a7ae38c_item2}
	$F$ is $C^\infty$-smooth and there exists $\lambda:\Omega\to[0,+\infty)$
	such that, for every $v\in C^\infty(H)$,
	\begin{equation}\label{eq6a7ae3d9}
		\laplacian_G(v\circ F) = \lambda^2 \cdot (\laplacian_Hv)\circ F .
	\end{equation}

	\item\label{thm6a7ae38c_item3}
	$F$ is $C^\infty$-smooth and there exists $\lambda:\Omega\to[0,+\infty)$ such that
	$F$ is a conformal submersion of factor $\lambda$, and, for every $p\in\Omega$,

\begin{equation}\label{eq6a7c3003}
		\tr_G\bigl(\Diff^2F(p)\bigr)
		- \Diff F(p)[\mG]
		+ \lambda(p)^2\,\mH = 0 ,
	\end{equation}
	where $\mG\in V(G)$ and $\mH\in V(H)$ are the modular vectors of $G$ and $H$, respectively.
	\end{enumerate}
\end{thm}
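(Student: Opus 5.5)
The plan is to chain the earlier results and then finish with a computation in left-invariant frames. For (i)$\Rightarrow$(ii): since $H$ is a sub-Riemannian Lie group, left translations act transitively by measure-preserving isometries, so $H$ is harmonically homogeneous. Theorem~\ref{thm6a7b21d7} then makes $F$ $C^\infty$-smooth. Theorem~\ref{thm699465f3}, applied with $k=\infty$, gives a non-negative smooth $f$ satisfying~\eqref{eq699465b3}. We set $\lambda=\sqrt f$. For (ii)$\Rightarrow$(i), Theorem~\ref{thm699465f3} applies directly.

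The core is (ii)$\Leftrightarrow$(iii). Fix orthonormal left-invariant frames $X_1,\dots,X_m$ of $V(G)$ and $Y_1,\dots,Y_n$ of $V(H)$. With respect to the Haar measure, the sub-Laplacian should take the form
\begin{equation}
\laplacian_G u=\sum_i X_iX_iu-\mG u,
\end{equation}
with $\mG$ the modular vector; the sign convention is the one fixed in the paper's section on sub-Laplacians. Similarly $\laplacian_H v=\sum_j Y_jY_jv-\mH v$. For $v\in C^\infty(H)$ the chain rule gives
\begin{equation}
X_iX_i(v\circ F)=\Diff^2v(F)[\Diff F X_i,\Diff F X_i]+\Diff v(F)[\Diff^2F(X_i,X_i)],
\end{equation}
where $\Diff^2F$ is the second derivative in the left-invariant sense, so that $\tr_G\Diff^2F(p)=\sum_i X_iX_iF$ read in exponential or left-trivialized coordinates. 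Hence
\begin{equation}
\laplacian_G(v\circ F)=\sum_i\Diff^2v(F)[\Diff F X_i,\Diff F X_i]+\Diff v(F)\bigl[\tr_G\Diff^2F-\Diff F[\mG]\bigr].
\end{equation}

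Next, test~\eqref{eq6a7ae3d9} on functions $v$ with prescribed first and second derivatives at a point $q=F(p)$; for instance, use $v$ equal to a left-translated polynomial in exponential coordinates of the first or second kind.

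First-order information: testing with $v$ whose horizontal second derivatives vanish at $q$ and whose first derivative is arbitrary should give the first-order identity, namely~\eqref{eq6a7c3003}.

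Second-order information: the purely second-order part then gives $\sum_i \Diff F X_i\otimes\Diff F X_i=\lambda^2\sum_j Y_j\otimes Y_j$ as symmetric tensors acting on $\Diff^2 v$. There is a caveat here. $\laplacian_H$ involves only horizontal second derivatives, while the left-hand side involves $\Diff F X_i$, which a priori could be non-horizontal. Since $v$ may have arbitrary symmetric Hessian at $q$ in any coordinates, the identity forces $\sum_i\Diff FX_i\otimes \Diff FX_i=\lambda^2\sum_jY_j\otimes Y_j$ in $\mathrm{Sym}^2T_qH$. From this identity:
\begin{itemize}
\item the image of $\Diff F|_{V(G)}$ is contained in $V(H)$, since the range of the left side is the span of the $\Diff FX_i$;
\item $\Diff F|_{V(G)}\circ(\Diff F|_{V(G)})^*=\lambda^2\,\Id_{V(H)}$, which is exactly the definition of conformal submersion of factor $\lambda$.
\end{itemize}
Conversely, given (iii), the same displayed formula collapses to $\lambda^2\laplacian_H v\circ F$. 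Care is needed with the mixed second-order terms coming from non-commuting frames. Their contribution is $\Diff v$ applied to something like $\Diff F(\nabla_{X_i}X_i)$, and this should be exactly what the definitions of $\tr_G\Diff^2F$ and $\mG$ absorb.

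The main obstacle I expect is this second-order identification: proving that the tensor identity holds in all of $\mathrm{Sym}^2 T_qH$ and not just on horizontal directions, and making sure the first-order remainder matches~\eqref{eq6a7c3003} exactly. That requires fixing precisely the conventions for $\Diff^2F$ (left-trivialized second derivative) and for the modular vectors, and checking that the $\lambda^2\mH$ term arises from the drift $-\mH v$ in $\laplacian_H$. I would first verify everything using first-order polynomials $v$ in exponential coordinates centered at $q$, where $Y_jY_jv(q)$ is computable explicitly.
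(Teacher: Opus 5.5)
Your proof is correct, and for the core equivalence (ii)$\Leftrightarrow$(iii) it follows a genuinely different route from the paper.

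\textbf{How the paper argues.} It computes nothing for (ii)$\Rightarrow$(iii). It notes that both sides of the identity depend only on $2$-jets, checks separately that $\lambda$ is continuous, and then invokes the external classification \cite[Theorem A]{2025arXiv250100576K}, restated as Proposition~\ref{prop6763089e}, with $b=0$ and $c=0$. It closes the cycle with (iii)$\Rightarrow$(i), via the composition formula of Corollary~\ref{cor6a7ec445}, which is again that proposition.

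\textbf{How you argue.} You reprove the relevant content by hand. The left-trivialized chain rule together with Lemma~\ref{hh:lem:modular}\ref{hh:it:mod3} gives
\[
\laplacian_G(v\circ F)(p)=\sum_{i=1}^m \tilde A_i^2 v(q)+\Diff v(q)\bigl[\tr_G\Diff^2F(p)-\Diff F(p)[\mG]\bigr],
\qquad A_i=\Diff F(p)[X_i],\ q=F(p),
\]
where $\tilde A_i$ is the left-invariant field with frozen value $A_i$. Testing against $v(q\exp x)=\ell(x)+\tfrac12 Q(x,x)$, with $\ell\in\frk h^*$ and $Q$ an arbitrary symmetric form on all of $\frk h$, separates the orders. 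The $Q$-part gives $\sum_i A_i\otimes A_i=\lambda^2\sum_j Y_j\otimes Y_j$, hence contactness and homothety. The $\ell$-part gives \eqref{eq6a7c3003}.

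\textbf{What each route buys.} Your argument is self-contained and purely pointwise. No a priori continuity of $\lambda$ is needed, since it comes out as $n\lambda^2=\sum_i|A_i|_H^2$, and the composition formula falls out as a by-product. The paper's route is shorter and places the theorem inside a classification that also covers first- and zeroth-order perturbations ($b$, $c$), which it reuses in Section~\ref{hh:sec}.

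\textbf{Two small points to tighten.}
\begin{enumerate}
\item In your first-order step, killing only the \emph{horizontal} second derivatives is not enough: at that stage the $A_i$ are not yet known to lie in $V(H)$. You need $\tilde w^2v(q)=0$ for every $w\in\frk h$. Linear functions in exponential coordinates of the \emph{first} kind achieve this, because $\tilde w^2v(q)=\frac{\diff^2}{\diff t^2}\big|_{t=0}v(q\exp(tw))$ is exactly the coordinate Hessian there. In coordinates of the second kind this fails in general, because of the quadratic Baker--Campbell--Hausdorff term. Alternatively, do the second-order step first.
\item The worry about mixed terms like $\Diff F(\nabla_{X_i}X_i)$ is moot. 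No such term appears in the left-trivialized chain rule, since $\Diff^2v(q)[A,A]$ sees only the symmetric part. The only drift is $-\Diff F[\mG]$, which comes from $\div_{\vol_G}\tilde X_i=-\kappa_G(X_i)$.
\end{enumerate}
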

Here $V(G)\subset\frk g=T_{1_G}G$ is the polarization at the identity, whereas $\tilde V(G)\subset TG$ is the horizontal subbundle obtained by left translation of $V(G)$.
Thus the modular vectors $\mG$ and $\mH$ in~\eqref{eq6a7c3003} belong to $V(G)$ and $V(H)$, respectively.
For the definition of conformal submersion, see Section~\ref{sec670d0abe};
the modular vector is the horizontal gradient of the logarithm of the modular function, see Section~\ref{sec670d69d9}.
The left-hand side of~\eqref{eq6a7c3003} is the \emph{harmonic morphism operator} $\Tt(F)$ of Section~\ref{subs6a7ee082}.Theorem~\ref{thm6a7ae38c} follows from Theorem~\ref{thm699465f3} and \cite[Theorem A]{2025arXiv250100576K};
we give the proof in Section~\ref{sec6a7ee0f3}.
Our last set of results, in Section~\ref{hh:sec}, compares Theorem~\ref{thm6a7ae38c} with Fuglede and Ishihara's theorem.
In the Riemannian setting, a harmonic morphism is a conformal submersion and a \emph{harmonic map}, that is, a critical point of the Dirichlet energy.
Theorem~\ref{thm6a7ae38c} says that a harmonic morphism of sub-Riemannian Lie groups is a conformal submersion that satisfies the equation $\Tt(F)=0$;
the question is whether this equation is the Euler--Lagrange equation of the horizontal energy
\begin{equation}\label{eq6a7ee1c8}
	\Energy(F) = \frac12\int_\Omega\sum_{i=1}^m\bigl|\Diff F[X_i]\bigr|_H^2\did\vol_G ,
\end{equation}
where $X_1,\dots,X_m$ is an orthonormal basis of the polarization $V(G)$.
The critical points of $\Energy$ among contact maps have been studied by Grong and Markina in~\cite[Theorem 4.2]{zbMATH07887823}.
The answer is negative, and the obstruction is a linear form on the Lie algebra of the target alone.
Namely, for every complement $\Qq$ of the polarization $V(H)$ in $\frk h$ we define in~\eqref{hh:eq:chi} the \emph{modular mismatch} $\chi\in\frk h^*$ of the splitting $\frk h=\Ee\oplus\Qq$, and we prove:
\begin{thm}\label{thm6a7ee2b1}
	Let $G$ and $H$ be sub-Riemannian Lie groups, $\Omega\subset G$ open, and $F:\Omega\to H$ a harmonic morphism, with conformal factor $\lambda$ as in Theorem~\ref{thm6a7ae38c}.
	Let $\Qq$ be a complement of $V(H)$ in $\frk h$ and $\chi\in\frk h^*$ the modular mismatch of the splitting $\frk h=\Ee\oplus\Qq$.
	Let $\mathcal F:\Omega\times(-\epsilon,\epsilon)\to H$ be a smooth contact variation of $F$: writing $F_s(p):=\mathcal F(p,s)$, assume that each $F_s$ is a contact map, that $F_0=F$, and that all the maps $F_s$ agree with $F$ outside a fixed compact subset of $\Omega$.
	If
	\[
		\varphi(p):=\left.\frac{\diff}{\diff s}\right|_{s=0}F(p)^{-1}F_s(p)
	\]
	is its variation field, then
	\begin{equation}\label{eq6a7ee2ec}
		\left.\frac{\diff}{\diff s}\right|_{s=0}\Energy(F_s)
		= \int_\Omega \lambda^2\,\chi(\varphi)\did\vol_G .
	\end{equation}
	In particular, if $\chi=0$, then $F$ is a harmonic map.
\end{thm}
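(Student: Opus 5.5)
The strategy is to compute the first variation of $\Energy$ along a contact variation by hand, integrate by parts, and compare the result with the harmonic morphism operator $\Tt(F)$ from Theorem~\ref{thm6a7ae38c}.

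Write $F_s(p)=F(p)\exp(s\varphi(p)+O(s^2))$ in left-trivialized form. For a left-invariant horizontal field $X_i$ on $G$, let $\omega_F(X_i):=F^{-1}\Diff F[X_i]\in\frk h$ be the left-trivialized differential. Since $F$ is contact, $\omega_F(X_i)\in\Ee$. Differentiating at $s=0$ gives
\[
\frac{\diff}{\diff s}\Big|_{s=0}\omega_{F_s}(X_i)=X_i\varphi+[\omega_F(X_i),\varphi].
\]
The energy density is $\frac12\sum_i|\omega_{F_s}(X_i)|_H^2$, and each $F_s$ is contact, so $\omega_{F_s}(X_i)$ stays in $\Ee$. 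We extend the scalar product to $\frk h$ by declaring $\Qq\perp\Ee$, with the projection $\pi_V:\frk h\to\Ee$ along $\Qq$. The derivative of the energy is then
\[
\int_\Omega\sum_i\bigl\langle \omega_F(X_i),\,\pi_V\bigl(X_i\varphi+[\omega_F(X_i),\varphi]\bigr)\bigr\rangle\did\vol_G.
\]
The contact constraint on the variation means that $\varphi$ is not arbitrary: its $\Qq$-component is determined by its $\Ee$-component through a first-order linear equation. We do not need to solve this equation. It is enough to keep $\varphi$ general and to use the fact that the $\frk h$-valued quantity $X_i\varphi+[\omega_F(X_i),\varphi]$ has $\Qq$-part prescribed by the linearized contact condition.

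The key step is integration by parts on $G$. For left-invariant $X_i$ we have $\int X_i u\did\vol_G=\int u\,\langle X_i,\mG\rangle\did\vol_G$, with the sign fixed by the paper's definition of modular vector; this is exactly where $\mG$ enters. Moving $X_i$ off $\varphi$ yields
\[
-\int\bigl\langle \sum_i X_i\omega_F(X_i)-\omega_F(\mG),\,\varphi\bigr\rangle\did\vol_G
\]
plus bracket terms of the form $\langle\omega_F(X_i),\pi_V[\omega_F(X_i),\varphi]\rangle$, which can be rewritten as $\langle \ad^*_{\omega_F(X_i)}\omega_F(X_i),\varphi\rangle$. Here $\sum_iX_i\omega_F(X_i)$ is the left-trivialized $\tr_G\Diff^2F$. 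Now use the conformal submersion property from Theorem~\ref{thm6a7ae38c}(iii): $\sum_i\omega_F(X_i)\otimes\omega_F(X_i)=\lambda^2\sum_a Y_a\otimes Y_a$ for an orthonormal basis $Y_a$ of $\Ee$. With this, the bracket term becomes $\lambda^2\sum_a\langle Y_a,\pi_V[Y_a,\varphi]\rangle$, which is $\lambda^2$ times a linear form in $\varphi$ depending only on $\frk h$, $\Ee$, $\Qq$. Using $\Tt(F)=0$ we replace $\tr_G\Diff^2F-\Diff F[\mG]$ by $-\lambda^2\mH$. The whole integrand is then $\lambda^2$ times a linear form on $\frk h$ evaluated at $\varphi$:
\[
\langle\mH,\pi_V\varphi\rangle-\sum_a\langle Y_a,\pi_V[Y_a,\varphi]\rangle,
\]
with signs to be tracked. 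This should match the definition \eqref{hh:eq:chi} of $\chi$; I would expect $\chi$ to be built precisely from $\mH$ and $\tr(\pi_V\circ\ad|_{\Ee})$-type terms. Note that $\mH$ equals the horizontal part of the dual of $\tr\ad$, so the form vanishes on $\Ee$ and only the mismatch on $\Qq$ survives.

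The main obstacle is the treatment of the $\Qq$-component of $\varphi$. The integration by parts naturally pairs against $\pi_V\varphi$, but the bracket terms involve the full $\varphi$, including its $\Qq$-part. I would first try to show directly that the bracket contribution with $\varphi\in\Ee$ cancels against $\mH$. This is the identity $\sum_a\langle Y_a,[Y_a,Z]\rangle=\langle\mH,Z\rangle$ for $Z\in\Ee$, which should follow from $\tr\ad_Z$ computed in a basis adapted to $\Ee\oplus\Qq$. The remaining terms involving the $\Qq$-part are then, by definition, $\chi(\varphi)$. If $\chi$ is defined differently in \eqref{hh:eq:chi}, the last step is a linear-algebra identification. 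The final claim follows at once: when $\chi=0$, every contact variation has zero first variation, so $F$ is a harmonic map.
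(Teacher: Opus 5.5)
\paragraph{Review.} Your route is the paper's: the derivative formula of Lemma~\ref{hh:lem:firstvar}, inserting $\pi_{\Ee}$ via the linearized contact condition, integrating by parts with \eqref{eq6a7f1a2b}, the tightness identity of Lemma~\ref{hh:lem:tight} for the bracket term, and $\Tt(F)=0$. This is the key identity of Proposition~\ref{hh:prop:key}, integrated as in Theorem~\ref{hh:thm:firstvariation}. Your computation is correct up to the last displayed linear form. However, the closing step, as you plan it, would fail.

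\emph{The sign.} The bracket term enters the first variation with a plus sign. Since $[Y_a,\varphi]=-\ad_\varphi Y_a$, you get
\[
\sum_a\bigl\langle Y_a,\pi_{\Ee}[Y_a,\varphi]\bigr\rangle_H=-\tr\bigl(\pi_{\Ee}\circ\ad_\varphi|_{\Ee}\bigr)=-\kae(\varphi).
\]
So the integrand is $\lambda^2\bigl(\kappa_H(\pi_{\Ee}\varphi)-\kae(\varphi)\bigr)$, which is $\lambda^2\chi(\varphi)$ directly by \eqref{hh:eq:chi}. Nothing more is needed.

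\emph{The final paragraph is false.} The form does not vanish on $\Ee$. For $Z\in\Ee$ it equals $\kappa_H(Z)-\kae(Z)=\kaq(Z)=\tr(\pi_{\Qq}\ad_Z|_{\Qq})$. A basis adapted to $\Ee\oplus\Qq$ gives $\tr\ad_Z=\kae(Z)+\kaq(Z)$, and the second summand is the one you dropped. Hence the identity you propose to prove (the bracket contribution cancelling $\mH$ for horizontal $\varphi$) amounts to $\kaq|_{\Ee}=0$, and this fails in general. In Lemma~\ref{hh:lem:example} one has $\kaq(Y_3)=2$. The variation in Theorem~\ref{hh:thm:counterexample} has the purely horizontal field $\varphi=\varphi_3Y_3$, yet its first variation is $2\int\varphi_3\did\vol_H\neq0$. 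If you pursued your plan, you would either get stuck or ``prove'' a false statement that contradicts this example.

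\emph{No obstacle from the $\Qq$-component.} The $\Qq$-component of $\varphi$ is not the main obstacle you identified. The tightness trace \eqref{hh:eq:tighttrace} applies to the endomorphism $\pi_{\Ee}\ad_\varphi|_{\Ee}$ of $\Ee$ for any $\varphi\in\frk h$. The integration by parts only sees $\pi_{\Ee}\varphi$. So no splitting of $\varphi$ into components is required. Replace your last paragraph by the direct identification above.
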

The quantity $\chi$ involves neither $F$, nor $G$, nor the scalar products: it only depends on the polarized Lie algebra $(\frk h,\Ee)$ and on the choice of $\Qq$.
It vanishes for a suitable $\Qq$ when $H$ is Riemannian and when $H$ is a Carnot group, among others, and in these cases harmonic morphisms are harmonic maps, as in the Riemannian theory: see Theorem~\ref{hh:thm:balanced} and Corollary~\ref{hh:cor:examples}.
It does not vanish in general.
Indeed, let $AN$ be the simply transitive solvable Iwasawa group model of the complex hyperbolic plane $\C H^2\simeq PU(2,1)/U(2)$, and polarize $AN$ by a bracket-generating hyperplane of its Lie algebra.
In Theorem~\ref{hh:thm:counterexample} we show that the identity map of this polarized Lie group is a harmonic morphism whose horizontal energy is strictly decreased by an explicit compactly supported variation; see Section~\ref{hh:subs:counterexample}.
Thus harmonic morphisms of sub-Riemannian Lie groups need not be harmonic maps.
We close the introduction with an open question: are harmonic morphisms smooth in general?
One possible approach would be to establish the existence of harmonic coordinates on every measured sub-Riemannian manifold.

Two further questions concerning the results of Section~\ref{hh:sec} are stated in Section~\ref{hh:subs:questions}.

\medskip
\noindent\textbf{Acknowledgments.}
A.P.'s research was supported in part by the University of Trento and by the INdAM--GNAMPA 2026 Project \emph{Variational, Geometric, and Analytic Perspectives on Regularity}, CUP E53C25002010001.

\medskip
\noindent\textbf{AI disclosure.}
The authors used OpenAI's ChatGPT during the preparation of this manuscript
for language editing, literature assistance, and preliminary mathematical
discussion. Moreover, Anthropic's Claude played an important role in devising Theorem~\ref{hh:thm:counterexample}.
The authors independently verified all mathematical content and
references and assume full responsibility for the final manuscript.

\section{Brelot Harmonic spaces}
We recall the standard foundamentals of the theory of Brelot harmonic spaces.
Our main references are~\cite[Chapter 6]{MR2363343}, ~\cite{zbMATH03224930} and \cite{Fuglede2011}.
See also~\cite{zbMATH05174426}

\subsection{Brelot harmonic space}\label{subs6a7c1901}

A \emph{harmonic space} is a pair $(X,\scr H)$ where $X$ is a locally compact, Hausdorff topological space and $\scr H$ is a linear sheaf of real-valued continuous functions.

In a harmonic space $(X,\scr H)$,
an open set $U\subset X$ is \emph{$\scr H$-regular} if 
\begin{enumerate}
\item
	(Dirichlet principle)
	for every $f\in C^0(\de U)$ there exists $h\in C^0(\bar U)$ with $h|_U\in\scr H(U)$ and $h|_{\de U} = f$; and
\item
	(comparison principle)
	for every $f,g\in C^0(\de U)$ and every $h_f,h_g\in C^0(\bar U)$ with $h_f|_U\in\scr H(U)$, $h_f|_{\de U} = f$, $h_g|_U\in\scr H(U)$, and $h_g|_{\de U} = g$, if $f\le g$ then $h_f\le h_g$.
\end{enumerate}
Notice that the second condition implies the uniqueness of the function $h$ in the first condition.
The function $h$ is called the \emph{harmonic extension} of $f$ to $U$ and denoted by $H_U^f$.
The operator $C^0(\de U)\to C^0(U)$, $f\mapsto H_U^f$, is easily shown to be linear and monotone.
It follows that there is a family  $\{\mu^U_x\}_{x\in U}$ of positive measures supported on $\de U$ such that, 
for every $f\in C^0(\de U)$ and $x\in U$, we have
$H_U^f(x) = \int_{\de U} f \did\mu^U_x$.
Such measures are called \emph{harmonic measures}.

A harmonic space $(X,\scr H)$ is a \emph{Brelot harmonic space} if
\begin{enumerate}
\item
	$\scr H$-regular pre-compact domains form a basis of the topology of $X$;
\item
	(Harnack's principle)
	the limit of every increasing sequence of harmonic functions on a domain is either harmonic or identically infinite.
\end{enumerate}

%

\subsection{Hyperharmonic and superharmonic functions}

Let $(X,\scr H)$ be a Brelot harmonic space. A function
\begin{equation}
    f : X \to (-\infty,+\infty]
\end{equation}
is called \emph{hyperharmonic} if it is lower semicontinuous and, for
every relatively compact $\scr H$-regular open set $U\subset X$,
\begin{equation}
    f(x) \ge \int_{\de U} f \did\mu^U_x
    \qquad\text{for every }x\in U.
\end{equation}
A hyperharmonic function $f$ is called \emph{superharmonic} if it is
finite on a dense subset of $X$. 

\subsection{Harmonic morphism}\label{subs699474ca}
Let $(X,\scr H)$ and $(Y,\scr K)$ be two harmonic spaces.
A continuous function $F:X\to Y$ is a \emph{harmonic morphism} if, for every $V\subset Y$ open and $v\in\scr K(V)$, we have $v\circ F\in\scr H(F^{-1}V)$, where $F^{-1}V$ denotes the preimage of $V$.

\begin{proposition}[{\cite[Cor. 3.2]{zbMATH03224930}}]\label{prop6947b0c3}
	Let $M$ and $N$ be harmonic spaces, with $M$ connected, and $F:M\to N$ a non-constant harmonic morphism.
	If $V\subset N$ is open and $f:V\to\R$ is superharmonic, then $f\circ F:F^{-1}(V)\to\R$ is also superharmonic.
\end{proposition}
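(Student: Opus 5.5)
The plan is to check the hyperharmonic inequality for $g := f\circ F$ directly, and then deal separately with finiteness on a dense set. The key input is that a harmonic morphism pushes harmonic measures forward into something the target's harmonic functions can test.

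\textbf{Lower semicontinuity and the mean-value inequality.} Set $W:=F^{-1}(V)$ and $g:=f\circ F$ on $W$. Lower semicontinuity is immediate, since $f$ is lower semicontinuous and $F$ is continuous. For the inequality, fix a relatively compact $\scr H$-regular $U$ with $\bar U\subset W$ and a point $x\in U$. I would use the standard characterization of hyperharmonic functions on Brelot spaces: a lower semicontinuous $f$ is hyperharmonic on $V$ if and only if, for every relatively compact regular $D\Subset V$ and every $h\in C^0(\bar D)$ harmonic in $D$ with $h\le f$ on $\de D$, one has $h\le f$ in $D$. One also has $f=\sup$ of continuous functions, approximated from below on compacts.

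\textbf{Transporting to the target.} The idea is to push forward the harmonic measure $\mu^U_x$ by $F$. For $v$ harmonic on a neighbourhood of $F(\bar U)$, $v\circ F$ is harmonic on $U$ and continuous on $\bar U$, so
\[
v(F(x))=\int_{\de U} v\circ F\did\mu^U_x = \int v\did(F_*\mu^U_x).
\]
So $F_*\mu^U_x$ is a positive measure on the compact set $F(\bar U)$ that represents $\varepsilon_{F(x)}$ with respect to local harmonic functions. Equivalently, it is a Jensen-type measure. I would then show $\int f\did(F_*\mu^U_x)\le f(F(x))$ for hyperharmonic $f$, which is exactly the required inequality $g(x)\ge\int_{\de U} g\did\mu^U_x$.

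\textbf{The main obstacle.} The difficulty is the Jensen property: harmonic representation for a measure only gives the inequality for superharmonic functions that are envelopes of harmonic minorants. Locally this is fine, since on a small regular neighbourhood every hyperharmonic function is the increasing limit of continuous potentials of the form $\inf$ of harmonic functions. The issue is that $F(\bar U)$ need not be small. I would first try to handle this by a chaining argument. Cover $F(\bar U)$ by small regular sets $D_j$. Shrink $U$ to regular sets whose image lies in one $D_j$, where $f=\lim\uparrow\inf_k h_k$ with $h_k$ harmonic on $D_j$, and the inequality follows from monotone convergence. This proves that $g$ is locally hyperharmonic. Hyperharmonicity is a sheaf property in Brelot spaces (Constantinescu--Cornea), so $g$ is hyperharmonic on $W$.

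\textbf{Finiteness on a dense set.} It remains to show that $g<\infty$ on a dense subset, and this is where connectedness and non-constancy of $F$ enter. Suppose $g=+\infty$ on some open $O\subset W$. Then $F(O)\subset\{f=+\infty\}$, which is polar and has empty interior. On the other hand, a nonconstant harmonic morphism on a connected space is open on a dense set, or at least cannot map an open set into a polar set, because harmonic functions pulled back would become constant there and the identity principle would force $F$ to be constant. I expect this last point to need care: I would argue that if $F(O)$ is polar, then a suitable superharmonic $s$ equal to $+\infty$ on $F(O)$ pulls back to a function identically $+\infty$ on $O$. By the minimum principle in the connected space, this propagates until it contradicts the non-constancy of $F$.
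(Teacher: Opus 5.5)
Your setup is fine: $f\circ F$ is lower semicontinuous, and $F_*\mu^U_x$ represents $F(x)$ on harmonic functions near $F(\bar U)$. The gap is the step you call ``locally fine''. The claim is false, and shrinking the neighbourhoods cannot repair it.

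\textbf{Why the local step fails.} Suppose $g=\lim\uparrow g_j$ with each $g_j=\inf_k h_{j,k}$ continuous and each $h_{j,k}$ harmonic on $D$. Then $\int g\did\nu\le g(y)$ for every positive measure $\nu$ with compact support in $D$ that represents $y$ on harmonic functions of $D$. But such representing measures need not be Jensen measures. A concrete example in $\R^2$: given any neighbourhood $D$ of $0$, choose $\overline{B(0,\rho)}\subset D$ and $0<|z_0|=r<\rho$, and let $\omega_z$ be the harmonic measure of $B(0,\rho)$ at $z$. The Poisson kernel gives $\omega_{z_0}\ge t\,\omega_0$ with $t=(\rho-r)/(\rho+r)$, so
\[
	\nu:=t\,\varepsilon_0+\omega_{z_0}-t\,\omega_0
\]
is a probability measure on $\overline{B(0,\rho)}$ with $\int h\did\nu=h(z_0)$ for all $h$ harmonic on $D$. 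Yet $f(z)=\log(1/|z|)$ has $\int f\did\nu=+\infty>f(z_0)$.

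Moving the poles to $a_k=2^{-k}$, say $f=\sum_k k^{-3}\log(1/|z-a_k|)$, gives $f(0)<\infty$. Every neighbourhood of $0$ still carries a measure of the same kind representing $0$ with an atom at some $a_k$. So even the pointwise statement $f(0)=\inf\{h(0): h \text{ harmonic on } D,\ h\ge f\}$ fails for every $D$.

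The only property of $F_*\mu^U_x$ you use is harmonic representation, so your argument cannot reach $\int f\did(F_*\mu^U_x)\le f(F(x))$. What must be shown is that these particular measures are Jensen measures.

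\textbf{What would work instead.} Reverse the quantifiers: approximate first, then localize on $M$ separately for each approximant. In $\R^d$, for instance, take $g_\epsilon:=f*\theta_\epsilon-\epsilon|z|^2$ with $\theta_\epsilon$ a radial mollifier. It increases to $f$ as $\epsilon\downarrow0$ and satisfies $\laplacian g_\epsilon\le-2d\epsilon$. Hence it is \emph{locally}, at a scale depending on $\epsilon$, an infimum of harmonic quadratic polynomials. So each $g_\epsilon\circ F$ is locally hyperharmonic, hence hyperharmonic by the sheaf property, and so is $f\circ F=\lim\uparrow g_\epsilon\circ F$. Some genuinely potential-theoretic input of this kind is missing from your plan. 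Note that the paper does not prove the proposition; it imports it from the cited reference.

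\textbf{The finiteness step is only asserted.} Several of your reasons do not hold up:
\begin{enumerate}[label=(\roman*)]
\item A polar set can be a continuum (a segment in $\R^3$), so nothing forces pulled-back harmonic functions to be constant on $O$.
\item No identity principle is available in a general Brelot space.
\item Openness of non-constant harmonic morphisms is a non-trivial theorem, not something you may invoke.
\end{enumerate}
Your fallback stalls as well. Once $f\circ F$ is hyperharmonic, the minimum principle only gives $f\circ F\equiv+\infty$ on the component $W_0$ of $F^{-1}(V)$ containing $O$, i.e.\ $F(W_0)\subset\{f=+\infty\}$. That is not yet a contradiction.

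You need two facts:
\begin{enumerate}[label=(\alph*)]
\item $F$ cannot map the domain $W_0$ into a polar set without being constant on $W_0$. This is the substantive point, and your sketch gives no argument for it.
\item Constancy on $W_0$ forces constancy on $M$.
\end{enumerate}
Part (b) is easy. If $F\equiv y_0$ on $W_0$, then $F\equiv y_0\in V$ on $\overline{W_0}$, so $\partial W_0\subset F^{-1}(V)$. But a boundary point of a component of the open set $F^{-1}(V)$ cannot lie in $F^{-1}(V)$; components are open because $M$ has a base of domains. Hence $\partial W_0=\emptyset$, and $W_0=M$ by connectedness.
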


\section{sub-Riemannian manifolds}

\subsection{General notations}
If $M$ is a smooth manifold, we denote by $TM$ its tangent bundle
and by $\Gamma(TM)$ the space of all $C^\infty$-smooth sections of $TM$.
If $F:M\to N$ is a $C^1$ map between smooth manifolds, we denote by $\diff F:TM\to TN$ its standard differential-geometric differential.

\subsection{Measured sub-Riemannian manifold}\label{subs6a829499}
A \emph{measured sub-Riemannian manifold} is given by $(M,\tilde V,\rho,\mu)$
where $M$ is a smooth manifold, $\tilde V\subset TM$ is a bracket-generating subbundle, $\rho$ is a scalar product on $\tilde V$ and $\mu$ is a smooth measure on~$M$.
We note that there are more general definitions of sub-Riemannian manifolds, such as \cite[Definition 3.2]{MR3971262}.
In our definition, the rank of $\tilde V$, i.e., the dimension of the fiber $\tilde V|_{p}$ for $p\in M$, is constant.

\subsection{Gradient}\label{subs6a7eb8de}
Let $(M,\tilde V,\rho,\mu)$ be a measured sub-Riemannian manifold.
The \emph{(horizontal) gradient} of a $C^1$ function $f:M\to\R$ at $x\in M$ is the unique 
vector field $\tilde\grad_Mf \in \Gamma(\tilde V) \subset \Gamma(TM)$ such that 
\begin{equation}\label{eq6904c16c}
	\langle \tilde\grad_Mf(p),v \rangle_\rho = \diff f(p)[v] ,
	\quad\text{for all $v\in \tilde V_p$.}
\end{equation}

\subsection{Divergence}
Let $M$ be a smooth manifold and $\mu$ a smooth measure on $M$.
The \emph{$\mu$-divergence} of a vector field $\tilde v\in\Gamma(TM)$ is the function $\div_\mu \tilde v\in C^\infty(M)$ such that, for every $\phi\in C^\infty_c(M)$,
\begin{equation}\label{eq6904c1b2}
	\int_M (\div_\mu \tilde v) \phi \did\mu = - \int_M \diff\phi[\tilde v] \did \mu .
\end{equation}
One can express the divergence of a smooth vector field $\tilde v\in\Gamma(TM)$ 
using the Lie derivative $\mathcal L_{\tilde v}$ of the volume form $\did\mu$ as
\begin{equation}\label{eq6a8d1a70}
	(\div_{\vol_\mu}\tilde v)\did\mu
	=\mathcal L_{\tilde v}\did\mu
	=\left.\frac{\diff}{\diff t}\right|_{t=0}\Phi_t^*\did\mu ,
\end{equation}
where $\Phi_t$ is the flow of $\tilde v$, see~\cite{2025arXiv250100576K}.

We recall that, if $\tilde X\in \Gamma(TM)$ is a vector field and $f\in C^\infty(M)$, then (see~\cite[Proposition 12.32]{MR2954043} and~\cite{2025arXiv250100576K})
\begin{equation}\label{eq670d3315}
    \div_\mu(f\tilde X) = df[\tilde X] + f \div_\mu(\tilde X) .
\end{equation}

\subsection{Sub-Riemannian Laplacian}\label{subs6a7c4364}
Let $M=(M,\tilde V,\rho,\mu)$ be a measured sub-Riemannian manifold.
The \emph{sub-Riemannian Laplacian on $M$} is the differential operator 
\begin{equation}\label{eq6904c1de}
	\laplacian_M u := \div_\mu(\tilde\grad_M u),
	\qquad\forall u\in C^2(M) .
\end{equation}

\subsection{Sub-Riemannian Harmonic function}
Let $M=(M,\tilde V,\rho,\mu)$ be a measured sub-Riemannian manifold.
A function $u:M\to\R$ is \emph{harmonic} or \emph{$M$-harmonic} if $\laplacian_Mu=0$.
By the hypoellipticity of $\laplacian_M$, see~\cite{MR0222474}, harmonic functions are smooth.

\subsection{Sub-Riemannian manifolds as Brelot harmonic spaces}\label{subs6a7b1f91}

\begin{proposition}[{\cite[Theorem~8.2]{zbMATH03281121}}]\label{prop69947463}
	Let $M$ be a measured sub-Riemannian manifold and $\scr H$ the sheaf of $\laplacian_M$-harmonic functions on $M$.
	Then $(M,\scr H)$ is a Brelot harmonic space.
\end{proposition}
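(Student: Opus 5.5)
The statement is cited from \cite[Theorem~8.2]{zbMATH03281121} (Bony's work on hypoelliptic operators). If I were to prove it directly, I would check the axioms of Subsection~\ref{subs6a7c1901} for the sheaf $\scr H$ of solutions of $\laplacian_M u=0$. The plan rests on three classical facts about sum-of-squares operators, all available because $\tilde V$ is bracket generating. First, $\laplacian_M$ can be written locally as $\sum_i X_i^2+X_0$, where $X_1,\dots,X_m$ is a local $\rho$-orthonormal frame of $\tilde V$ and $X_0=\sum_i(\div_\mu X_i)X_i$. Second, Hörmander's theorem \cite{MR0222474} gives hypoellipticity. Third, Bony's strong maximum principle holds via propagation along horizontal curves; these reach every point by the Chow--Rashevskii theorem.

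\emph{Step 1 (sheaf).} Being a classical solution of a linear PDE is a local condition preserved under linear combinations, so $\scr H$ is a linear sheaf of continuous functions. Hypoellipticity also shows that continuous distributional solutions are smooth, so there is no ambiguity about which solutions we mean.

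\emph{Step 2 (regular sets form a basis).} This is the main obstacle. Near each point $x_0$ I would construct small domains on which the Dirichlet problem is solvable with continuous boundary data. I would try Bony's approach first. Perturb the operator to the elliptic $L_\epsilon=\laplacian_M+\epsilon\Delta_{\mathrm{Riem}}$ and work on small balls, or on smooth domains whose boundary is non-characteristic at every point. Bony's trick is to take domains whose boundary points all admit exterior barriers, for instance because some $X_i$ is transversal there. Solve the elliptic problems, obtain uniform bounds from the maximum principle, and pass to the limit using hypoelliptic interior estimates. Boundary continuity comes from local barriers $w$ with $\laplacian_M w\le -c<0$; near $x_0$ such a $w$ can be obtained as $-e^{Ax_1}$ in coordinates where $X_1=\partial_{x_1}$. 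The comparison principle, and hence uniqueness, follows from the weak maximum principle for $\laplacian_M$, which holds wherever a function with $\laplacian_M w<0$ exists (the same exponential barrier supplies one). These regular domains can be chosen inside any neighbourhood of $x_0$.

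\emph{Step 3 (Harnack principle).} Let $(u_n)$ be an increasing sequence in $\scr H(D)$, with $D$ a domain. I would apply Bony's Harnack inequality: on compact $K\subset D$, nonnegative harmonic functions satisfy $\sup_K u\le C_K\inf_K u$, which follows from the strong maximum principle together with the compactness of harmonic measures. Applied to $u_n-u_1$, this shows the limit is either locally bounded everywhere or identically $+\infty$, using connectedness of $D$. In the bounded case, monotone convergence plus the representation $u_n(x)=\int u_n\,\diff\mu^U_x$ on regular $U$ shows that the limit is continuous and satisfies the same mean-value identity. By Step 2 and hypoellipticity, the limit is then harmonic.
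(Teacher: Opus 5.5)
Your proposal takes the same route as the paper: the paper gives no argument of its own and simply cites Bony's Theorem~8.2, and your outline (local form $\sum_i X_i^2+X_0$, Hörmander hypoellipticity, elliptic regularization with boundary barriers, Bony's strong maximum principle via Chow--Rashevskii, Bony's Harnack inequality) is a sketch of that reference. If you wanted the sketch to stand on its own, two steps need repair: $-e^{Ax_1}$ gives the weak maximum principle but is not a barrier at a boundary point, and the strong maximum principle alone does not give Harnack's inequality, since it only makes the harmonic measures $\mu^U_x$ mutually absolutely continuous without a uniform bound on their densities; for the barriers you need domains where every boundary point has a non-characteristic exterior normal (small smooth ball-like domains always have characteristic points when $\tilde V\neq TM$, but thin lenses $\{|x-x_0-ae_1|<2a\}\cap\{|x-x_0+ae_1|<2a\}$ in coordinates with $X_1=\de_{x_1}$ work).
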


\begin{proposition}[{\cite[Theorem 4.2]{zbMATH03262684}}]\label{prop69947448}
	Let $M$ be a measured sub-Riemannian manifold and $u\in C^2(M)$.
	The following statements are equivalent:
	\begin{enumerate}[label=(\roman*)]
	\item
	$u$ is superharmonic (or sub-harmonic, resp.) in the abstract sense of Brelot spaces;
	\item
	$\laplacian_M u\le 0$ (or $\laplacian_Mu\ge0$, resp.).
	\end{enumerate}
\end{proposition}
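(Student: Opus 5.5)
The proof rests on two facts. The first is a local strict maximum principle for $\laplacian_M$ on $C^2$ functions. The second is the existence of a basis of relatively compact $\scr H$-regular sets, given by Proposition~\ref{prop69947463}. I treat only the superharmonic case; the subharmonic one follows by replacing $u$ with $-u$.

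\emph{Structure of $\laplacian_M$.} Choose locally a $\rho$-orthonormal frame $X_1,\dots,X_m$ of $\tilde V$. By \eqref{eq670d3315},
\[
\laplacian_M u=\sum_i X_i^2u+\sum_i(\div_\mu X_i)\,X_iu .
\]
At an interior local minimum of a $C^2$ function $w$ we have $X_iw=0$ and $X_i^2w\ge0$ for all $i$, hence $\laplacian_Mw\ge0$. Consequently, a function with $\laplacian_Mw<0$ on an open set has no interior minimum there.

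\emph{Strictly negative auxiliary function.} Near any point, take a smooth $\varphi$ with $X_1\varphi\neq0$. Then
\[
\laplacian_M e^{k\varphi}=e^{k\varphi}\bigl(k^2|\tilde\grad_M\varphi|^2+k\laplacian_M\varphi\bigr)>0
\]
on a neighbourhood, for $k$ large. Covering a compact set by finitely many such neighbourhoods requires care. The simplest fix is to work only with small regular sets $U$ contained in one such neighbourhood, which suffices because these sets still form a basis.

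\emph{(ii)$\Rightarrow$(i).} Assume $\laplacian_Mu\le0$. Since $u$ is continuous, it is lower semicontinuous and finite. Let $U$ be a small relatively compact regular set as above, and let $h=H_U^{u|_{\de U}}$. For $\varepsilon>0$ set
\[
w_\varepsilon=u-h-\varepsilon e^{k\varphi}+\varepsilon\max_{\bar U}e^{k\varphi}.
\]
Then $\laplacian_Mw_\varepsilon<0$ in $U$ and $w_\varepsilon\ge0$ on $\de U$, so by the remark above $w_\varepsilon\ge0$ on $\bar U$. Letting $\varepsilon\to0$ gives $u\ge h=\int_{\de U}u\,\diff\mu^U_x$.

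This inequality holds on a basis of regular sets. To pass to all relatively compact regular sets, as the definition of hyperharmonic requires, I would use the standard Brelot-space fact that the local mean-value inequality on a basis implies hyperharmonicity (see \cite{zbMATH03224930}). If I wanted to avoid this step, I would remove the smallness restriction by building a global $w$ with $\laplacian_Mw<0$ on a neighbourhood of $\bar U$ via a partition of unity. I expect this localization step to be the main technical obstacle.

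\emph{(i)$\Rightarrow$(ii).} Suppose $u$ is superharmonic but $\laplacian_Mu(x_0)>0$. By continuity $\laplacian_Mu>0$ on a neighbourhood $W$ of $x_0$. Applying the previous argument to $-u$ on a small regular $U\ni x_0$ with $\bar U\subset W$ gives $u\le H_U^{u}$. Superharmonicity gives $u\ge H_U^{u}$. Hence $u=H_U^u$ is harmonic in $U$, so $\laplacian_Mu=0$ at $x_0$, which is a contradiction.
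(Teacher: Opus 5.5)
The paper gives no proof of Proposition~\ref{prop69947448}; it simply cites \cite[Theorem 4.2]{zbMATH03262684}. Your proposal is a correct, self-contained proof along classical lines, so the comparison is between a citation and a direct argument.

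Your ingredients check out:
\begin{itemize}
\item The local formula $\laplacian_M=\sum_iX_i^2+\sum_i(\div_\mu X_i)X_i$ gives the weak minimum principle for $C^2$ functions with $\laplacian_M w<0$.
\item The function $e^{k\varphi}$ with $X_1\varphi\neq0$ is a local strict subsolution.
\item The comparison $u\ge H_U^u$ on small regular sets follows from these two facts.
\item In (i)$\THEN$(ii) only small sets are needed, so that direction is complete as written.
\end{itemize}

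In (ii)$\THEN$(i) you actually prove more than ``the inequality on a basis''. Your comparison works for every relatively compact regular $U$ with $\bar U\subset N$, so $u$ is hyperharmonic, in the full sense of the definition, on every open set $N$ that carries a strict subsolution. The fact you need is therefore exactly that hyperharmonicity is a local property in a Brelot space, i.e.\ hyperharmonic functions form a sheaf. This is standard. Here it follows from a minimum principle for locally hyperharmonic functions, using two facts: harmonic measures are probability measures because constants are $\laplacian_M$-harmonic, and by Harnack they have full support on the boundary of regular domains.

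The partition-of-unity alternative you mention would not work as stated. The cutoffs $\chi$ produce cross terms $2k\,e^{k\varphi}\langle\tilde\grad_M\chi,\tilde\grad_M\varphi\rangle$ of indefinite sign, so strict subharmonicity is not preserved.

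If you want to avoid the sheaf property, use Bony's strong maximum principle instead. For any relatively compact regular $U$, the function $w=u-H_U^u$ satisfies $\laplacian_Mw\le0$ in $U$ and $w=0$ on $\de U$. A negative interior minimum would make $w$ constant on a component of $U$, whose boundary lies in $\de U$, which contradicts the boundary values.

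What your route buys is transparency. It shows that the equivalence needs only the weak maximum principle with local barriers plus Brelot-space locality (or, alternatively, Bony's strong principle). The paper's citation packages all of this into a single reference.
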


Let $M$ and $N$ be measured sub-Riemannian manifolds.
By the definition in Section~\ref{subs699474ca},
a continuous function $F:M\to N$ is a harmonic morphism if for every $\Omega_N\subset N$ and every $N$-harmonic function $u:\Omega_N\to \R$, the pull back $F^*u = u\circ F:F^{-1}(\Omega_N)\to\R$ is $M$-harmonic.
Propositions~\ref{prop6947b0c3} and~\ref{prop69947448} imply that, if $F:M\to N$ is a harmonic morphism, then whenever $\laplacian_Nu\ge0$ we have $\laplacian_M(F^*u)\ge0$.

\subsection{Harmonically homogeneous sub-Riemannian manifolds}\label{subs6a7eff81}
A measured sub-Riemannian manifold $M$, is \emph{harmonically homogeneous}, if there is a Lie group $K$ with a transitive action $K\times M\to M$, such that for every $k\in K$ the map $x\mapsto kx$ is a harmonic morphism of $M$.

Isometries of $M$ that preserve the measure of $M$ commute with $\laplacian_M$, see~\cite{2025arXiv250100576K}, and thus they are harmonic morphisms.
Therefore, isometrically homogeneous measured sub-Riemannian manifolds, and in particular sub-Riemannian Lie groups with their left translations, are harmonically homogeneous.

\section{sub-Riemannian Lie groups}

For sub-Riemannian Lie groups, we use the same notation as in~\cite{2025arXiv250100576K}.
Here we will only list the main objects.
For further details on the theory of sub-Riemannian Lie groups, see~\cite{zbMATH08074076}.

\subsection{Lie groups}
Given a Lie group $G$, we identify, as vector spaces, its Lie algebra $\frk g=\Lie(G)$ with the tangent space $T_{1_G}G$ of $G$ at the identity element $1_G$ of $G$.
We denote by $L_p$ the left translation $G\to G$ by $p$: $L_p(x) = px$.
If $v:E\to\frk g$ for some $E\subset G$, then we define  $\tilde v:E\to TG$ by the following formula 
\begin{equation}
\tilde v(p) := dL_p[v] \in T_pG,
\qquad \forall p\in E .
\end{equation}
If $v\in\frk g$, then $\tilde v$ is the left-invariant vector field with value $v$ at $1$.

If $f:\Omega\to W$ is a $C^1$-function from an open set $\Omega\subset G$ to a finite-dimensional vector space $W$,
and $v:\Omega\to\frk g$ is continuous, 
 then we define $\tilde vf:\Omega\to W$ by 
\begin{equation}
\tilde vf(p) := df(p)[\tilde v(p)] = \left.\frac{\diff}{\diff t}\right|_{t=0} f(p\exp(tv(p))) .
\end{equation}

\subsection{Lie differential}
Let $G$ and $H$ be Lie groups and $\Omega\subset G$ open.
The \emph{Lie differential of order one} of a smooth function $F:\Omega\to H$ at $p\in \Omega$ is the  map $\Diff F : \Omega \to \frk h \otimes \frk{g}^* $ defined by 
\begin{equation}\label{DFdef1}
	\Diff F(p)[v] := \left.\frac{\diff}{\diff t}\right|_{t=0} F(p)^{-1}F(p\exp(tv))  ,
	\qquad\forall v\in\frk g
\end{equation}

Let $G$ and $H$ be two polarized Lie groups, $\Omega\subset G$ open
and $F:\Omega\to H$ a $C^2$ function.
Notice that the function $\Diff F:\Omega\to\frk h\otimes\frk g^*$ is a $C^1$ function valued in the Abelian Lie group $\frk h\otimes\frk g^*$.
As such, we define the \emph{Lie differential of order two} of $F$ as the Lie differential $\Diff (\Diff F)$ of $\Diff F$,
that is, as the function $\Diff^2F: \Omega \to (\frk{h} \otimes \frk g^*) \otimes \frk g^*$,
\begin{equation}\label{eq67684517}
\begin{aligned}
	\Diff^2F(p)[v,w] 
	&:= (\Diff(\Diff F)(p)[w])[v] 
	\overset{\eqref{DFdef1}}= \left.\frac{\diff}{\diff t}\right|_{t=0} ( \Diff F(p \exp(t w))[v] - \Diff F(p)[v] )\\
	&= \left.\frac{\diff}{\diff t}\right|_{t=0} \Diff F(p \exp(t w))[v] \\ 
	&= \left.\frac{\diff}{\diff t}\right|_{t=0} \left.\frac{\diff}{\diff s}\right|_{s=0} F(p\exp(tw))^{-1}F(p\exp(tw)\exp(sv)) . 
\end{aligned}
\end{equation}

Notice that $D^2F(p)[v,w]$ need not be symmetric in $[v,w]$.

\subsection{Sub-Riemannian Lie groups}\label{sssec:num2.2}
A \emph{polarized Lie group} is a pair $(G,V(G))$ where $G$ is a connected Lie group and $V(G)\subset\frk g$ is a bracket-generating subspace of the Lie algebra $\frk g$ of $G$. We call $V(G)$ the \emph{polarization} of $G$.
By \emph{bracket-generating subspace} we mean a linear subspace that Lie generates the Lie algebra.


Consequently, the bundle of left translates
\begin{equation}
\tilde V(G) := \bigcup_{p \in G} d L_p(V(G))
\end{equation}
forms a \emph{horizontal subbundle} of $TG$, which is bracket generating.

A \emph{measured sub-Riemannian Lie group} is the measured sub-Riemannian manifold determined by $(G, \tilde V(G),\langle \cdot,\cdot \rangle_G,\vol_G)$ where $(G,V(G))$ is a polarized Lie group, $\langle \cdot,\cdot \rangle_G$ is a left-invariant scalar product on $\tilde V(G)$ and $\vol_G$ is a left Haar measure on $G$.
Notice that, by left-invariance, the scalar product is determined by its value on $V(G)\subset T_{1_G}G$.

If $\Omega\subset G$ is open and $f\in C^1(\Omega)$, then the horizontal gradient $\tilde\grad_G f\in \Gamma(\tilde V(G))$ from Section~\ref{subs6a7eb8de} defines a map $\grad_Gf:\Omega\to V(G)$ such that
\begin{equation}\label{eq6a7f0b3a}
	\tilde\grad_G f(p) = \diff L_p|_{1_G} [\grad_Gf(p)] ,
\end{equation}
that is, $\grad_Gf(p)$ is the unique vector of $V(G)$ such that
$\langle \grad_Gf(p),v\rangle_G = \Diff f(p)[v]$ for every $v\in V(G)$.

\subsection{Haar measure and the modular function}\label{sec670d69d9}
Let $G$ be a sub-Riemannian Lie group
with polarization $V(G)\subset\frk g$
and left-invariant Haar measure $\vol_G$.
%
The volume form~$\did\vol_G$ is left invariant, in the sense that
\begin{equation}\label{eq6762c2f8}
	\forall a,x\in G
	\qquad
	dL_a|_x^* \did\vol_G(ax) = \did\vol_G(x) .
\end{equation}

The \emph{modular function} of a Lie group $G$ is the function $\mu_G:G\to (0,\infty)$ given by 
\begin{equation}
\mu_G(g) := \det(\Ad_g) ,
\end{equation}
where $\Ad_g:\frk g\to\frk g$ is the adjoint representation of $G$.
More explicitly, $\Ad_g = DC_g$ where $C_g(x) = gxg^{-1}$ is the conjugation by $g$.
The defining property of the modular function is that, for all $p\in G$,
\begin{equation}\label{eq6762d704}
	C_p^*\did\vol_G = \mu_G(p) \did\vol_G ,
	\qquad\text{and}\qquad
	R_{p^{-1}}^*\did\vol_G = \mu_G(p) \did\vol_G .
\end{equation}
See~\cite[Chapter 11]{MR1681462} for further details.

We write
\begin{equation}
	\kappa_G:=\tr\circ\ad\in\frk g^*,
\end{equation}
that is, $\kappa_G(v)=\tr(\ad_v)$  for all $v\in\frk g$.
Since $\kappa_G$ is linear, there exists a unique vector $\mG\in V(G)$, which we call the \emph{modular vector} of $G$, such that
\begin{equation}\label{hh:eq:mG}
	\langle \mG , v\rangle_G = \kappa_G(v)
	\quad\forall v\in V(G).
\end{equation}
If $X_1,\dots,X_m$ is an orthonormal basis of $V(G)$, then
\begin{equation}\label{eq6a7ebdf0}
	\mG = \sum_{i=1}^m\kappa_G(X_i) X_i .
\end{equation}

\begin{lemma}\label{hh:lem:modular}
Let $G$ be a sub-Riemannian Lie group with polarization $V(G)\subset\frk g$, left Haar measure $\vol_G$, modular function $\mu_G$ and modular vector $\mG$.
Let $X_1,\dots,X_m$ be an orthonormal basis of $V(G)$.
\begin{enumerate}[leftmargin=*,label=(\alph*)]
\item\label{hh:it:mod1}
	$\log\mu_G:G\to(\R,+)$ is a Lie group morphism and
	\begin{equation}\label{eq6a8d1959}
		\Diff(\log\mu_G) = \kappa_G ,
	\end{equation}
	i.e., $\tilde v(\log\mu_G)\equiv\kappa_G(v)$ for every $v\in\frk g$.
	In particular,
	\begin{equation}\label{eq6a8d1982}
		\grad_G\log\mu_G=\mG \in V(G).
	\end{equation}
\item\label{hh:it:mod2}
	For every $v\in\frk g$ we have $\div_{\vol_G}\tilde v=-\kappa_G(v)$.
	Consequently, for every $\phi\in C^\infty_c(G)$ and every $v\in\frk g$,
	\begin{equation}\label{eq6a7f1a2b}
		\int_G \tilde v \phi\did\vol_G = \kappa_G(v)\int_G \phi \did\vol_G .
	\end{equation}
\item\label{hh:it:mod3}
	For every open set $\Omega\subset G$ and every $u\in C^2(\Omega)$,
	\begin{equation}\label{hh:eq:laplacian}
		\laplacian_G u=\sum_{i=1}^m\tilde X_i^2u-\widetilde{\mG}u .
	\end{equation}
\end{enumerate}
\end{lemma}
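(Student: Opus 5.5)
For part (a), I will first show that $\log\mu_G$ is a homomorphism into $(\R,+)$. Since $\Ad_{gh}=\Ad_g\Ad_h$, the determinant is multiplicative, $\mu_G(gh)=\mu_G(g)\mu_G(h)$, and $\mu_G>0$ because $G$ is connected. Next I differentiate along a left translate:
\[
\tilde v(\log\mu_G)(p)=\left.\frac{\diff}{\diff t}\right|_{t=0}\bigl(\log\mu_G(p)+\log\mu_G(\exp(tv))\bigr)=\left.\frac{\diff}{\diff t}\right|_{t=0}\log\det(e^{t\ad_v})=\tr(\ad_v)=\kappa_G(v),
\]
using $\Ad_{\exp(tv)}=e^{t\ad_v}$ and $\det e^{A}=e^{\tr A}$. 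Hence $\Diff(\log\mu_G)\equiv\kappa_G$. The gradient identity \eqref{eq6a8d1982} then follows directly from the characterization $\langle\grad_Gf(p),v\rangle_G=\Diff f(p)[v]$ for $v\in V(G)$, combined with the definition \eqref{hh:eq:mG} of $\mG$.

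For part (b), I will use \eqref{eq6a8d1a70}. The flow of the left-invariant field $\tilde v$ is right translation, $\Phi_t=R_{\exp(tv)}$. By \eqref{eq6762d704},
\[
R_{\exp(tv)}^*\did\vol_G=\mu_G(\exp(-tv))\did\vol_G=e^{-t\kappa_G(v)}\did\vol_G,
\]
where the last equality uses part (a). Differentiating at $t=0$ gives $\div_{\vol_G}\tilde v=-\kappa_G(v)$, a constant. Substituting this into the defining identity \eqref{eq6904c1b2} gives \eqref{eq6a7f1a2b}: the left side is $\int\diff\phi[\tilde v]\did\vol_G=-\int(\div\tilde v)\phi\did\vol_G=\kappa_G(v)\int\phi\did\vol_G$. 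The main point needing care is the sign convention in \eqref{eq6762d704}, namely that $R_{p^{-1}}^*$ produces the factor $\mu_G(p)$. I will double-check it against a simple example, such as the $ax+b$ group, to make sure the signs in (b) are right.

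For part (c), I write $\tilde\grad_Gu=\sum_i(\tilde X_iu)\tilde X_i$ with respect to the orthonormal basis. Applying the product rule \eqref{eq670d3315} together with (b),
\[
\laplacian_Gu=\sum_i\Bigl(\tilde X_i^2u-(\tilde X_iu)\,\kappa_G(X_i)\Bigr)=\sum_i\tilde X_i^2u-\widetilde{\mG}u,
\]
where the last step uses \eqref{eq6a7ebdf0} and the linearity of $v\mapsto\tilde vu$. For $u\in C^2$, \eqref{eq670d3315} is still valid in the distributional sense from \eqref{eq6904c1b2}, so this regularity level is enough.
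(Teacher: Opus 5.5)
Your proposal is correct and follows the paper's proof essentially step by step. For (a) you use multiplicativity of $\det\Ad$ together with $\det e^{t\ad_v}=e^{t\tr\ad_v}$; for (b) you use the flow $R_{\exp(tv)}$ with $R_{\exp(tv)}^*\did\vol_G=\mu_G(\exp(-tv))\did\vol_G$ (the paper's $R_a^*\did\vol_G=\mu_G(a)^{-1}\did\vol_G$), whose sign agrees with \eqref{eq6762d704} and with the $ax+b$ check $\div(a\de_a)=-1=-\kappa_G(X)$; and for (c) you use $\tilde\grad_Gu=\sum_i(\tilde X_iu)\tilde X_i$ with the product rule \eqref{eq670d3315}.
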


\begin{proof}
\ref{hh:it:mod1}
From $\Ad_{gh}=\Ad_g\Ad_h$ we get
\begin{equation}\label{eq6a8d179c}
	\mu_G(gh)=\mu_G(g)\mu_G(h)
	\qquad\forall g,h\in G .
\end{equation}
Using the Jacobi formula, we get
\begin{equation}\label{eq6a8d17a7}
\begin{aligned}
	\log\mu_G(g\exp(tv))
	&=\log\mu_G(g)+\log\det\left(e^{t\ad_v}\right) \\
	&=\log\mu_G(g)+\log\left(e^{t\tr(\ad_v})\right)
	=\log\mu_G(g)+t\,\kappa_G(v) .
\end{aligned}
\end{equation}
Differentiating at $t=0$ gives $\Diff(\log\mu_G)[v] = \tilde v(\log\mu_G)=\kappa_G(v)$,
which is~\eqref{eq6a8d1959}.
Then~\eqref{hh:eq:mG} is the definition~\eqref{eq6904c16c} of the horizontal gradient,
and so we get~\eqref{eq6a8d1982}.

\ref{hh:it:mod2}
From~\eqref{eq6762d704} and~\eqref{eq6a8d1959} we get $R_a^*\did\vol_G=\mu_G(a)^{-1}\did\vol_G$ for every $a\in G$.
The flow of $\tilde v$ is $\Phi_t=R_{\exp(tv)}$, hence
\begin{align}
	(\div_{\vol_G}\tilde v)\did\vol_G
	&\overset{\eqref{eq6a8d1a70}}=\left.\frac{\diff}{\diff t}\right|_{t=0}\Phi_t^*\did\vol_G \\
	&=\left.\frac{\diff}{\diff t}\right|_{t=0}\mu_G(\exp(tv))^{-1}\did\vol_G
	=-\kappa_G(v)\did\vol_G .
\end{align}
Identity~\eqref{eq6a7f1a2b} follows from~\eqref{eq6904c1b2}.

\ref{hh:it:mod3}
Since $\tilde\grad_Gu=\sum_i(\tilde X_iu)\tilde X_i$ and~\eqref{eq670d3315},
part~\ref{hh:it:mod2} gives
$\laplacian_Gu=\sum_i\tilde X_i^2u-\sum_i\kappa_G(X_i)\tilde X_iu$,
which is~\eqref{hh:eq:laplacian} by~\eqref{eq6a7ebdf0}.
\end{proof}

\begin{remark}\label{hh:rem:axb}
As an example, consider the affine group $G=\{(a,b):a>0\}$ with $(a,b)(a',b')=(aa',b+ab')$ and $V(G)=\frk g$:
here $\tilde X=a\de_a$ and $\tilde Y=a\de_b$ are orthonormal left-invariant fields, $[X,Y]=Y$, hence $\mG=X$ and
\begin{equation}
	\laplacian_G=(a\de_a)^2+(a\de_b)^2-a\de_a=a^2(\de_a^2+\de_b^2)
\end{equation}
is the Laplace--Beltrami operator of the hyperbolic plane, as it must be.
\end{remark}
 
\subsection{Contact maps}\label{subs6a7ee050} 
Let $G$ and $H$ be polarized Lie groups and $\Omega\subset G$ open.
A $C^1$-map $F:\Omega\to H$ is a \emph{contact map}, or a \emph{horizontal map}, if
\begin{equation}
\Diff F(p)[V(G)] \subseteq V(H),
\qquad\forall p\in\Omega.
\end{equation}

Let $G$ and $H$ be sub-Riemannian Lie groups, $\Omega\subset G$ open, $X_1,\dots,X_m$ an orthonormal basis of $V(G)$, and $F:\Omega\to H$ a $C^1$ contact map.
We shall use the functions
\begin{equation}\label{hh:eq:Ai}
	A_i:\Omega\to V(H)\subset\frk h ,
	\qquad
	A_i(p):=\Diff F(p)[X_i],
	\qquad i=1,\dots,m .
\end{equation}
If $F$ is a $C^2$ contact map, then $\Diff^2F(p)|_{V(G)}$ is valued in $V(H)$
and the \emph{trace} of the bilinear map $\Diff^2F(p)|_{V(G)}:V(G)\times V(G)\to V(H)$ is
\begin{equation}
\tr_G(\Diff^2F(p)) := \sum_{i=1}^m  \Diff^2F(p)[X_i,X_i] ,
\end{equation}
for one (thus every) orthonormal basis $X_1,\dots,X_m$ of $V(G)$.
By~\eqref{eq67684517}, we have $\tilde X_iA_i=\Diff^2F[X_i,X_i]$, and thus
\begin{equation}\label{eq6a7ee065}
	\tr_G(\Diff^2F) = \sum_{i=1}^m \tilde X_iA_i .
\end{equation}

\subsection{Homothetic projections}
Let $V,W$ be Hilbert spaces.
The \emph{transpose} of a linear map $L:V\to W$ is the linear map $L^T:W\to V$ such that
\begin{equation}
\langle L^Tw,v \rangle_V = \langle w,Lv \rangle_W
\end{equation}
for all $v\in V$ and $w\in W$.

A linear map $L:V\to W$ is a \emph{homothetic projection} of factor $\lambda\ge0$ if
\begin{equation}\label{eq6708401d}
\langle L^Tw_1,L^Tw_2 \rangle_V  = \lambda^2 \langle w_1,w_2 \rangle_W ,
\end{equation}
for all $w_1,w_2\in W$,
i.e., the transpose $L^T:W\to V$ is a \emph{homothetic embedding} of factor $\lambda$.
It follows that, if $\lambda=0$ then $L=0$, while if $\lambda>0$ then $L$ is surjective.


\begin{lemma}\label{hh:lem:tight}
	Let $V$ and $W$ be Hilbert spaces with $X_1,\dots,X_m$ an orthonormal basis of $V$, $L:V\to W$ linear, and $\lambda\in[0,+\infty)$.
	Then $L$ 
%
%
	is a homothetic projection of factor $\lambda$ if and only if
	\begin{equation}\label{hh:eq:tight}
		\sum_{i=1}^m\langle w,LX_i\rangle_W \cdot LX_i=\lambda^2\,w
		\quad\forall w\in W .
	\end{equation}
	In this case, for every linear map $P:W\to W$,
	\begin{equation}\label{hh:eq:tighttrace}
		\sum_{i=1}^m\langle PLX_i,LX_i\rangle_W = \lambda^2 \tr(P).
	\end{equation}
\end{lemma}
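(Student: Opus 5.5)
The plan is to compute $L^T$ explicitly in terms of the basis $X_1,\dots,X_m$ and then rewrite condition \eqref{eq6708401d} as the operator identity $LL^T=\lambda^2\Id_W$. First, since $X_1,\dots,X_m$ is an orthonormal basis of $V$, every $v\in V$ satisfies $v=\sum_i\langle v,X_i\rangle_V X_i$. Applying this to $v=L^Tw$ and using the definition of the transpose, $\langle L^Tw,X_i\rangle_V=\langle w,LX_i\rangle_W$, we get
\[
	L^Tw=\sum_{i=1}^m\langle w,LX_i\rangle_W\,X_i ,
	\qquad
	LL^Tw=\sum_{i=1}^m\langle w,LX_i\rangle_W\,LX_i .
\]
So the left-hand side of \eqref{hh:eq:tight} is exactly $LL^Tw$.

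Next I would show that $L$ is a homothetic projection of factor $\lambda$ if and only if $LL^T=\lambda^2\Id_W$. One direction uses only the definition of the transpose: $\langle L^Tw_1,L^Tw_2\rangle_V=\langle LL^Tw_1,w_2\rangle_W$. Hence \eqref{eq6708401d} says $\langle (LL^T-\lambda^2\Id_W)w_1,w_2\rangle_W=0$ for all $w_1,w_2\in W$. Taking $w_2=(LL^T-\lambda^2\Id_W)w_1$ shows this is equivalent to $LL^T=\lambda^2\Id_W$. Combined with the expansion above, this gives the equivalence with \eqref{hh:eq:tight}.

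For the trace identity \eqref{hh:eq:tighttrace}, write
\[
	\sum_i\langle PLX_i,LX_i\rangle_W=\sum_i\langle L^TPLX_i,X_i\rangle_V=\tr_V(L^TPL).
\]
Here I use that the trace of an operator on $V$ equals the sum of its diagonal entries in an orthonormal basis. By cyclicity of the trace, $\tr_V(L^TPL)=\tr_W(PLL^T)$, and $PLL^T=\lambda^2P$, so the sum equals $\lambda^2\tr(P)$.

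There is no real obstacle; the only points needing care are the finite-dimensionality implicit in having a finite orthonormal basis (so $W$ can be taken finite-dimensional wherever traces appear) and the cyclicity $\tr(AB)=\tr(BA)$ for $A=L^T P:W\to V$ and $B=L:V\to W$, which holds between different spaces. The degenerate case $\lambda=0$ needs no separate treatment, since everything above is linear in $\lambda^2$.
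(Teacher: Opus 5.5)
Your proof is correct and follows essentially the same route as the paper. The paper also expands $L^Tw$ in the basis $X_i$, which identifies the left-hand side of \eqref{hh:eq:tight} with $LL^Tw=\lambda^2 w$. Its trace computation, done in an orthonormal basis of $W$, is just your cyclicity step $\tr_V(L^TPL)=\tr_W(PLL^T)$ written out explicitly.
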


\begin{proof}
For $w\in W$ we have $L^Tw=\sum_i\langle L^Tw,X_i\rangle_V X_i=\sum_i\langle w,LX_i\rangle_W X_i$, hence
\begin{equation}
	\langle L^Tw_1,L^Tw_2\rangle_V
	=\sum_{i=1}^m\langle w_1,LX_i\rangle_W \langle w_2,LX_i\rangle_W ,
\end{equation}
and by~\eqref{eq6708401d} this equals $\lambda^2\langle w_1,w_2\rangle_W$ for all $w_1,w_2\in W$ if and only if~\eqref{hh:eq:tight} holds.
For~\eqref{hh:eq:tighttrace}, expand the trace of $P\circ\sum_iLX_i\otimes LX_i=\lambda^2P$ in an orthonormal basis $Y_1,\dots,Y_n$ of $W$:
\begin{equation}
	\lambda^2\tr(P)
	=\sum_{a=1}^n\Big\langle P\sum_{i=1}^m\langle Y_a,LX_i\rangle_W LX_i,Y_a\Big\rangle_H
	=\sum_{i=1}^m\langle PLX_i,LX_i\rangle_H . 
\end{equation}
\end{proof}

\subsection{Conformal submersion of sub-Riemannian Lie groups}\label{sec670d0abe}

Let $G$ and $H$ be sub-Riemannian Lie groups, and $\Omega\subset G$ open. 
A \emph{$C^1$-conformal submersion} of factor $\lambda:\Omega\to[0,+\infty)$ is a $C^1$ contact map $F:\Omega\to H$,
such that, for every $p\in\Omega$, the restriction $\Diff F(p)|_{V(G)}:V(G)\to V(H)$ is a linear homothetic projection of factor $\lambda(p)$.
Conformal submersions are also known as \emph{semiconformal maps}.

We stress that the term ``submersion'' is slightly abused since $\lambda(p)$ may be zero, in which case $\Diff F(p)|_{V(G)}=0$.
However, {\it if $F$ is a $C^\infty$-smooth conformal submersion of factor $\lambda$, then $F$ is a submersion on $\{\lambda\neq0\}$}.
Indeed, if $\lambda(p)\neq0$, then $\Diff F(p)[V(G)]=V(H)$.
Since $V(H)$ is bracket-generating, $F$ is a smooth submersion on $\{\lambda\neq0\}$, that is, the Lie differential $\Diff F(p):\frk g\to\frk h$ is surjective for all $p\in\Omega\cap\{\lambda\neq0\}$, see Lemma~\ref{lem6a7d950c}.

Let $X_1,\dots,X_m$ be an orthonormal basis of $V(G)$, $A_1,\dots,A_m$ as in~\eqref{hh:eq:Ai}, and $Y_1,\dots,Y_n$ an orthonormal basis of $V(H)$.
Taking $w=Y_a$ in~\eqref{hh:eq:tight} and summing over $a$, we see that the factor $\lambda$ of a conformal submersion $F$ is determined by $F$, because
\begin{equation}\label{eq6a7ee0a1}
	\sum_{i=1}^m|A_i|_H^2 = n\,\lambda^2 .
\end{equation}
%

\subsection{Horizontal energy and harmonic maps}\label{subs6a7ee0c8}
Let $G$ and $H$ be sub-Riemannian Lie groups, $\Omega\subset G$ open, and $X_1,\dots,X_m$ an orthonormal basis of $V(G)$.
The \emph{horizontal energy} of a $C^1$ contact map $F:\Omega\to H$ is
\begin{equation}\label{hh:eq:energy}
	\Energy(F):=\frac12\int_\Omega\sum_{i=1}^m\bigl|\Diff F[X_i]\bigr|_H^2\did\vol_G
	\ \in[0,+\infty] ,
\end{equation}
which does not depend on the choice of the orthonormal basis.

%
%

A \emph{contact variation} of a smooth contact map $F:\Omega\to H$ is a smooth family $(F_s)_{s\in(-\epsilon,\epsilon)}$ of contact maps $\Omega\to H$ with $F_0=F$ and such that $F_s=F$ on $\Omega\setminus K$, for every $s$, for some compact set $K\subset\Omega$;
the \emph{variation field} of $(F_s)_s$ is
\begin{equation}\label{hh:eq:variationfield}
	\varphi\in C^\infty_c(\Omega;\frk h),
	\qquad
	\varphi(p):=\left.\frac{\diff}{\diff s}\right|_{s=0}F(p)^{-1}F_s(p) ,
	\qquad\forall p\in\Omega.
\end{equation}
Since $F_s=F$ outside $K$, the derivative at $s=0$ of $s\mapsto\frac12\int_K\sum_i|\Diff F_s[X_i]|_H^2\did\vol_G$ does not depend on the choice of $K$:
we denote it by $\left.\frac{\diff}{\diff s}\right|_{s=0}\Energy(F_s)$, also when $\Energy(F)=+\infty$.
The map $F$ is a \emph{harmonic map} if
\begin{equation}
	\left.\frac{\diff}{\diff s}\right|_{s=0}\Energy(F_s) = 0
	\qquad\text{for every variation $(F_s)_s$ of $F$.}
\end{equation}

We stress that every $F_s$ is required to be a contact map.

\section{Regularity of harmonic morphisms}\label{sec6a7b20bc}

\subsection{Smoothness of horizontal harmonic morphisms}\label{subs6a7b21a4}

In the following theorem, we show that harmonic morphisms are smooth if we assume continuity and a weak condition on the preservation of the horizontal structures.
We stress that the definition of harmonic morphism does not assume the map to be differentiable or to preserve the horizontal distributions.

A sub-Riemannian manifold $M$ with horizontal bundle $\tilde V\subset TM$ is \emph{equiregular} if, 
for every $k\in\N$, the set $\tilde V^{(k)}\subset TM$ is a subbundle of $TM$, where $\tilde V^{(k)}$ is the set generated  $k$-iterated brackets of sections of $\tilde V$.
We use equiregularity only in the following Theorem~\ref{thm6a7ee1f4}, because it is required in \cite[Theorem 4.4]{MR3912638}.

\begin{theorem}[Theorem~\ref{thm69959b8b}]\label{thm6a7ee1f4}
	Let $M$ and $N$ be measured equiregular sub-Riemannian manifolds and let $F:M\to N$ be a continuous harmonic morphism.
	Suppose that $F$ maps horizontal curves to horizontal curves,
	that is, if $\gamma$ is an AC horizontal curve in $M$, then $F\circ\gamma$ is an AC horizontal curve in $N$.
	Then $F$ is $C^\infty$-smooth.
\end{theorem}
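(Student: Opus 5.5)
The plan is to reduce the statement to the regularity theorem of~\cite{MR3912638} for maps that preserve horizontal curves and pull back harmonic functions. The argument is local, so we fix $p\in M$ and an open neighbourhood $W$ of $F(p)$. The aim is to produce, near $F(p)$, finitely many $N$-harmonic functions $u_1,\dots,u_k$ with two properties: (a) they separate points, and (b) the map $u=(u_1,\dots,u_k):W\to\R^k$ is a smooth embedding of a neighbourhood of $F(p)$. Granting this, $u\circ F$ is smooth because each $u_j\circ F$ is $M$-harmonic (this is the definition of harmonic morphism, with smoothness coming from hypoellipticity). Since $F$ is continuous, $F$ maps a neighbourhood of $p$ into the neighbourhood where $u$ is an embedding. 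There we can write $F=u^{-1}\circ(u\circ F)$, which is smooth.

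So the whole difficulty is the existence of such harmonic functions, and this is exactly where the hypotheses enter. Without further assumptions we do not know that harmonic coordinates exist, so we cannot simply take $u$ to be a coordinate chart. The approach of~\cite[Theorem 4.4]{MR3912638} for equiregular manifolds is the following. Suppose $F$ is continuous, maps horizontal curves to horizontal curves, and $u\circ F$ is harmonic for all harmonic $u$. Then $F$ is locally Lipschitz (or Sobolev) with respect to the Carnot--Carathéodory distances, and this regularity can be bootstrapped. I would therefore check that our hypotheses match that theorem's hypotheses and quote it.

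Concretely, the steps are as follows.
\begin{enumerate}
\item Horizontal-curve preservation together with equiregularity gives that $F$ is a contact-type map, in the weak sense, at the level of the metric.
\item For each smooth function $v$ on $N$, solve the Dirichlet problem on a small regular domain $U\ni F(p)$, which is possible by Proposition~\ref{prop69947463}. This produces harmonic $u$ with $\diff u(F(p))$ approximating $\diff v(F(p))$ on the horizontal layer. Here Harnack and Schauder-type interior estimates are needed to control $u-v$ in $C^1$ at the scale of $U$.
\item Choosing $v$ among a set of horizontal coordinates, obtain harmonic functions whose horizontal differentials are independent at $F(p)$.
\item The functions $u_j\circ F$ are smooth. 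Because $F$ is horizontal, the horizontal derivatives of $F$ can be expressed through those of $u_j\circ F$, and hypoelliptic bootstrapping then yields full smoothness.
\end{enumerate}

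I expect step (iv) to be the main obstacle. Harmonic functions only control the horizontal directions, so recovering the vertical components of $F$ requires using that $F$ preserves horizontality; this in turn determines the vertical components via iterated brackets. This is precisely the content of~\cite[Theorem 4.4]{MR3912638}, and in writing the proof I would invoke that theorem rather than redo the argument, verifying only that harmonic morphisms satisfy its hypotheses through Propositions~\ref{prop69947463} and~\ref{prop6947b0c3}.
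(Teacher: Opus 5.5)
Your steps (ii)--(iv) are the paper's argument. Take coordinates $y_1,\dots,y_n$ of $N$ at $F(p)$ with $y_1,\dots,y_r$ harmonic ($r$ the rank of $N$), note that each $y_k\circ F$ with $k\le r$ is harmonic and hence smooth by hypoellipticity, and conclude from the preservation of horizontal curves that $F$ is smooth. What needs fixing is the citations, which here are the whole proof. As the paper uses \cite{MR3912638}, the existence of these partial harmonic coordinates (your (ii)--(iii)) is \cite[Theorem 4.4]{MR3912638}, not a regularity theorem for maps that pull back harmonic functions. The regularity step (iv) is \cite[Proposition 4.14]{MR3912638}, and it rests on horizontality of $F$, not on hypoelliptic bootstrapping. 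You therefore need neither a Lipschitz/Sobolev estimate, nor your step (i), nor Propositions~\ref{prop69947463} and~\ref{prop6947b0c3}.
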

\begin{proof}
	The proof is a direct application of results from~\cite{MR3912638}.
	
	Fix $p\in M$ and $q=F(p)\in N$.
	First, by \cite[Theorem 4.4]{MR3912638}, there is a system of coordinates $y_1,\dots,y_n$ for $N$ at $q$ so that $y_1,\dots,y_r$ are $\laplacian_N$-harmonic functions, where $r$ is the rank of $N$.
	Since $F$ is harmonic, the functions $F_k = y_k\circ F$ are $\laplacian_M$-harmonic for $k\in\{1,\dots,r\}$, and thus smooth.
	Since $F$ maps horizontal curves to horizontal curves,
	we can apply~\cite[Proposition 4.14]{MR3912638} to obtain that $F$ is $C^\infty$-smooth.
\end{proof}

\subsection{Harmonic coordinates}\label{subs6a7b219a}

Recall from Section~\ref{subs6a7eff81} that a measured sub-Rie\-man\-nian manifold is harmonically homogeneous if it carries a transitive Lie group action by harmonic morphisms, and that sub-Riemannian Lie groups are harmonically homogeneous.

\begin{theorem}[Theorem~\ref{thm6a7b21cf}]\label{thm6a7ee20d}
	Let $M$ be a measured sub-Riemannian manifold that is harmonically homogeneous.
	Then every point of $M$ has harmonic coordinates, that is, the coordinate functions are harmonic functions.
\end{theorem}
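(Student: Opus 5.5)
Fix $p\in M$ and let $n=\dim M$. We aim to produce $n$ harmonic functions on a neighbourhood of $p$ whose differentials at $p$ are linearly independent; by the inverse function theorem they then form a coordinate system. Since harmonic functions are smooth by hypoellipticity, only the independence of the differentials needs proof. Homogeneity will move independence from one point to every point.

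\emph{Step 1: at each point, harmonic differentials are not all zero in a prescribed direction.} Take any $q\in M$ and any nonzero covector $\xi\in T_q^*M$. We claim there is a harmonic function $h$ near $q$ with $\diff h(q)[w]\neq0$ for some $w$ with $\xi(w)\ne0$. More precisely, the set $E_q=\{\diff h(q): h \text{ harmonic near } q\}$ is a linear subspace of $T_q^*M$, and we claim $E_q=T_q^*M$.

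Argue by contradiction. Suppose $E_q$ is proper. Then there is a nonzero $w\in T_qM$ annihilated by every harmonic differential at $q$. Take a small $\scr H$-regular neighbourhood $U$ of $q$ (Proposition~\ref{prop69947463}) and a smooth chart in which $w=\de_1$. For each $f\in C^0(\de U)$ we have $\de_1 H_U^f(q)=0$. Choose $f$ to be the boundary restriction of the coordinate $x_1$. The function $u=x_1-H_U^{x_1}$ vanishes on $\de U$, and $\de_1 u(q)=1$. Since $H_U$ is given by harmonic measures, one expects to show instead that harmonic functions separate points near $q$ along every direction. For this we use the Brelot property: harmonic measures $\mu^U_x$ depend continuously on $x$, and $x\mapsto\mu^U_x$ is injective. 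Injectivity follows because $\laplacian_M$ has a Green function, and boundary data approximating a Dirac mass distinguish points. Injectivity alone gives separation of points, not independence of derivatives, so the cleaner route is by dimension count. Consider the map $\Phi:U\to\R^N$, $x\mapsto(H_U^{f_1}(x),\dots,H_U^{f_N}(x))$, for many boundary data $f_j$. It is smooth and injective on a compact subset, provided $N$ is large and the $f_j$ are chosen to separate harmonic measures. By Sard's theorem together with injectivity, its rank equals $n$ on an open dense set. So there is a point $q'$ near $q$ with $\dim E_{q'}=n$.

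\emph{Step 2: transport by the group.} Let $K$ act transitively by harmonic morphisms, and choose $k\in K$ with $k\cdot p=q'$. Let $h_1,\dots,h_n$ be harmonic near $q'$ with independent differentials. Then $h_j\circ k$ is harmonic near $p$ by the definition of harmonic morphism. Since $x\mapsto kx$ is a diffeomorphism (the action is smooth, being a Lie group action, and $k^{-1}$ gives its inverse), the differentials $\diff(h_j\circ k)(p)=\diff h_j(q')\circ \diff k(p)$ remain independent. This yields harmonic coordinates at $p$.

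\emph{Main obstacle.} The hard step is Step 1: producing \emph{one} point where harmonic functions have full rank, without knowing harmonic coordinates anywhere. I would try the injectivity-plus-rank argument above, with the $f_j$ taken as continuous boundary data. Separation of points follows from Brelot's axioms, using resolutivity and the fact that constants are harmonic. If the rank were everywhere less than $n$ on an open set, the constant rank theorem would give a positive-dimensional level set of $\Phi$ near a maximal-rank point. That contradicts injectivity once $N$ is large enough, by compactness and a Baire-category choice of the $f_j$ countable and dense.
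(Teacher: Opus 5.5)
Your Step~2 is correct: once some point $q'$ carries harmonic functions $h_1,\dots,h_n$ with independent differentials, composing them with $x\mapsto kx$, where $kp=q'$, gives harmonic coordinates at $p$. The gap is Step~1, which carries the whole content of the theorem.

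Its opening, a single vector $w$ at a single point $q$ together with $u=x_1-H^{x_1}_U$, gives no contradiction. A harmonic function may well have zero derivative in a given direction at one point. You then switch to two claims. The first is that harmonic functions on a regular domain $U$ separate points, i.e.\ that $x\mapsto\mu^U_x$ is injective. The second is that finitely many $H^{f_j}_U$ give a map that is injective on an open set. Neither claim is proved. Harmonicity of constants separates nothing. The remark about a Green function and boundary data approximating a Dirac mass is a heuristic, not an argument. The Baire-category choice of the $f_j$ is not explained. More fundamentally, your own constant-rank argument shows that ``harmonic functions separate the points of some open set'' is \emph{equivalent} to ``harmonic differentials span $T^*_{q'}M$ at some $q'$''. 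So Step~1 assumes, rather than proves, exactly what it is meant to establish.

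The idea you are missing is the one you started with, applied to a vector \emph{field} on an open set instead of a single vector.
\begin{itemize}
\item Let $\tilde W_x\subset T_xM$ be the common kernel of $\diff u(x)$ over all harmonic $u$ defined near $x$.
\item Each $k\in K$ is a harmonic morphism, and so is $k^{-1}$. Hence $\diff k(x)[\tilde W_x]=\tilde W_{kx}$, and by transitivity $\tilde W$ is a smooth $K$-invariant subbundle of constant rank.
\item Suppose $\tilde W\neq0$. Pick a nonvanishing section $X$ near $p$ and a function $f$ with $Xf\equiv1$ (say $f=x_1$ in a flow box). Pick a small regular domain $U\ni p$ such that the integral curve $\gamma$ of $X$ through $p$ stays in $U$ for $t\in(t_-,t_+)$, with $\gamma(t_\pm)\in\de U$.
\item Then $u:=H^f_U$ satisfies $Xu\equiv0$ on all of $U$, so $u\circ\gamma$ is constant on $(t_-,t_+)$. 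By continuity on $\bar U$, $f(\gamma(t_-))=f(\gamma(t_+))$. This contradicts $f(\gamma(t_+))-f(\gamma(t_-))=t_+-t_->0$.
\end{itemize}
Hence $\tilde W=0$, which is the theorem.

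This argument also repairs your two-step plan. Near a point where $\dim E_x$ is maximal, $E_x$ is spanned by the differentials of finitely many fixed harmonic functions. So its annihilator is a smooth subbundle there, and the same Dirichlet argument forces $\dim E_x=n$. Your Step~2 then transports the coordinates to every point.
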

\begin{proof}
	Let $K$ be a Lie group and let $K\times M\to M$ be a transitive action by harmonic morphisms of $M$.
	For every $p\in M$, define
	\begin{equation}
		\tilde W_p := \{v\in T_pM : \diff u(p)[v] = 0 \text{ for every harmonic function $u$ defined near $p$}\} .
	\end{equation}
	If $k\in K$ and $u$ is harmonic near $kp$, then $u\circ k$ is harmonic near $p$, because $k$ acts as a harmonic morphism;
	hence $\diff u(kp)[\diff k(p)[v]] = \diff (u\circ k)(p)[v] = 0$ for every $v\in \tilde W_p$, that is, $\diff k(p)[\tilde W_p]\subseteq \tilde W_{kp}$.
	Applying the same argument to $k^{-1}$, we get $\diff k(p)[\tilde W_p]= \tilde W_{kp}$.
	Since the action is transitive, the set $\tilde W:=\bigsqcup_{p\in M}\tilde W_p \subset TM$ is a $K$-invariant smooth vector subbundle of $TM$.

	By construction, for every $X\in\Gamma(\tilde W)$, we have that $Xu=0$ for every harmonic function $u$ on $M$.

	Arguing by contradiction, assume that $\tilde W\neq0$, and thus there is a non-zero smooth vector field $X\in\Gamma(\tilde W)$.
	Fix $p\in M$ with $X(p)\neq0$ and $\gamma:(-\epsilon,\epsilon)\to M$ with $\gamma(0)=p$ and $\dot\gamma(t) = X(\gamma(t))$ for every $t\in(-\epsilon,\epsilon)$, for some $\epsilon>0$.

	Let $f$ be a smooth map defined on some open subset $\Omega$ of $M$ such that $Xf\equiv 1$ and $p\in\Omega$.

	Since measured sub-Riemannian manifolds are Brelot harmonic spaces, see Proposition~\ref{prop69947463}, we can find a $\laplacian_M$-regular domain $U\Subset\Omega$ with $p\in U$ and $\gamma((-\epsilon,\epsilon))\cap M\setminus U \neq \emptyset$.
	In particular, since $\gamma^{-1}(U)$ is an open subset of $\R$ containing $0$, there are $t_-<0$ and $t_+>0$ such that $\gamma(t_-)\in\de U$, $\gamma(t_+)\in\de U$, and $\gamma(t) \in U$ for every $t\in (t_-,t_+)$.

	Let $u\in C(\bar U)$ be a harmonic function in $U$ and so that $u=f$ on $\de U$.
	Since $u$ is harmonic, then $Xu=0$ and thus $u(\gamma(t_-)) = u(\gamma(t_+))$.
	Since $Xf=1$, then $f(\gamma(t_+)) - f(\gamma(t_-)) = t_+ - t_- >0$:
	contradiction.

	We conclude that $\tilde W=0$ and thus, for every $p\in M$, there are harmonic functions $u_1,\dots,u_n$ so that $\diff u_1(p),\dots,\diff u_n(p)$ is a basis of $T_p^*M$, and thus these functions form a system of coordinates at $p$.
\end{proof}

\begin{theorem}[Theorem~\ref{thm6a7b21d7}]\label{thm6a7ee21f}
	Let $M$ and $N$ be measured sub-Riemannian manifolds and $F:M\to N$ a harmonic morphism.
	Suppose that $N$ is harmonically homogeneous.
	Then $F$ is $C^\infty$-smooth.
\end{theorem}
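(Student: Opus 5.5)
The plan is to combine Theorem~\ref{thm6a7ee20d} with the definition of harmonic morphism. By definition a harmonic morphism is continuous, so smoothness is a local question: fix $p\in M$ and set $q=F(p)\in N$. Since $N$ is harmonically homogeneous, Theorem~\ref{thm6a7ee20d} gives an open neighbourhood $W\subset N$ of $q$ and $\laplacian_N$-harmonic functions $u_1,\dots,u_n\colon W\to\R$, $n=\dim N$, such that
\[
\psi:=(u_1,\dots,u_n)\colon W\to\R^n
\]
is a chart; that is, $\psi$ is a diffeomorphism onto an open subset of $\R^n$. Shrinking $W$ if necessary, we may assume this.

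Next use that $F$ is a harmonic morphism. Continuity of $F$ makes $U:=F^{-1}(W)$ an open neighbourhood of $p$. For each $k$, the pullback $u_k\circ F$ is $\laplacian_M$-harmonic on $U$. By the hypoellipticity of $\laplacian_M$ (Section~\ref{subs6a7c4364}), harmonic functions are $C^\infty$, so each $u_k\circ F$ is smooth on $U$. Hence $\psi\circ F|_U=(u_1\circ F,\dots,u_n\circ F)\colon U\to\R^n$ is smooth. Because $\psi$ is a diffeomorphism onto its image, it follows that $F|_U=\psi^{-1}\circ(\psi\circ F|_U)$ is $C^\infty$. Since $p$ was arbitrary, $F$ is smooth.

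There is essentially no obstacle left once Theorem~\ref{thm6a7ee20d} is available; all the difficulty is in the existence of harmonic coordinates. The points to check are small. First, the harmonic coordinates must be defined on a neighbourhood of $q$ itself. Theorem~\ref{thm6a7ee20d} provides them at every point, with $\diff u_k(q)$ forming a basis of $T_q^*N$, and the inverse function theorem then yields the chart. Second, harmonicity of $u_k\circ F$ is only claimed on the open set $F^{-1}(W)$, which is exactly what the definition in Section~\ref{subs699474ca} provides. No regularity of $F$ beyond continuity is used, which matches the fact that a harmonic morphism is a priori only continuous.
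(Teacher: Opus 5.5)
Your proposal is correct and follows the paper's proof essentially verbatim. Both use the harmonic coordinates of Theorem~\ref{thm6a7ee20d} near $F(p)$, pull them back to harmonic (hence smooth, by hypoellipticity) functions on $F^{-1}(W)$, and conclude that $F=\psi^{-1}\circ(\psi\circ F)$ is smooth. Your appeal to the inverse function theorem only makes explicit the chart property that the paper takes for granted.
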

\begin{proof}
	Fix $p\in M$.
	By Theorem~\ref{thm6a7ee20d}, there are an open neighborhood $\Omega_N$ of $F(p)$ in $N$ and harmonic functions $u_1,\dots,u_n$ on $\Omega_N$ that form a system of coordinates on $\Omega_N$.
	Since $F$ is a harmonic morphism, the functions $u_k\circ F$ are harmonic on the open set $F^{-1}(\Omega_N)$, and thus they are smooth, by the hypoellipticity of $\laplacian_M$.
	Since $(u_1,\dots,u_n)$ is a chart of $N$, we conclude that $F$ is smooth on $F^{-1}(\Omega_N)$.
\end{proof}

Since sub-Riemannian Lie groups are harmonically homogeneous, we obtain the following.

\begin{corollary}\label{cor6995a31c}
	Let $G$ and $H$ be sub-Riemannian Lie groups, $\Omega\subset G$ open, and $F:\Omega\to H$ a harmonic morphism.
	Then $F$ is $C^\infty$-smooth.
\end{corollary}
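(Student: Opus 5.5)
The corollary follows from Theorem~\ref{thm6a7ee21f} once its hypotheses are checked: its source must be a measured sub-Riemannian manifold and its target must be harmonically homogeneous.

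For the source, equip the open set $\Omega\subset G$ with the restricted structure $(\Omega,\tilde V(G)|_\Omega,\langle\cdot,\cdot\rangle_G,\vol_G|_\Omega)$. This is a measured sub-Riemannian manifold in the sense of Section~\ref{subs6a829499}. Indeed, $\tilde V(G)$ is a bracket-generating subbundle of constant rank, and these properties are local, so they pass to open subsets. Moreover, the sheaf of $\laplacian_G$-harmonic functions on $\Omega$ is just the restriction of the sheaf on $G$, because $\laplacian_G$ is a differential operator. Hence $F:\Omega\to H$ is a harmonic morphism in the sense of Section~\ref{subs699474ca} between measured sub-Riemannian manifolds.

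For the target, $H$ is harmonically homogeneous in the sense of Section~\ref{subs6a7eff81}. Take the Lie group $K=H$ acting on itself by left translations; this action is transitive. Each left translation $L_h$ is an isometry of the left-invariant scalar product on $\tilde V(H)$, since $\diff L_h$ maps $\tilde V(H)$ to itself isometrically by construction. Each $L_h$ also preserves the left Haar measure $\vol_H$, by~\eqref{eq6762c2f8}. Measure-preserving isometries commute with $\laplacian_H$, as recalled in Section~\ref{subs6a7eff81}. Therefore $L_h$ pulls back harmonic functions to harmonic functions, that is, it is a harmonic morphism.

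With both hypotheses verified, Theorem~\ref{thm6a7ee21f}, applied with $M=\Omega$ and $N=H$, gives that $F$ is $C^\infty$-smooth. The only step needing any care is the commutation of left translations with $\laplacian_H$. If one prefers not to cite the reference, this can be checked directly from Lemma~\ref{hh:lem:modular}\ref{hh:it:mod3}: by that formula $\laplacian_H$ is a polynomial in left-invariant vector fields, and left-invariant vector fields commute with left translations.
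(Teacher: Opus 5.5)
Your proposal is correct and follows the paper's own route. The paper deduces the corollary in one line from Theorem~\ref{thm6a7ee21f}, applied with $M=\Omega$ and $N=H$, using that $H$ is harmonically homogeneous because its left translations are measure-preserving isometries (Section~\ref{subs6a7eff81}); your extra details (restricting the structure to $\Omega$, and checking via Lemma~\ref{hh:lem:modular}\ref{hh:it:mod3} that left translations commute with $\laplacian_H$) are valid verifications that the paper leaves implicit.
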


\section{Proof of Theorem~\ref{thm699465f3}}\label{sec699472d5}
\newcommand{\coeff}{f}

\subsection{Pull back of differential operators}

A part of classical results for harmonic spaces,
Proposition~\ref{prop68fbbcbb}, which follows from Lemma~\ref{lem68fbae06}, covers the main step of the proof of Theorem~\ref{thm699465f3}.

\begin{lemma}\label{lem68fbae06}
	Let $V$ be a vector space over a field $\K$ and $\alpha,\beta\in\Lin_\K(V;\K)$.
	If $\ker(\alpha)\subset\ker(\beta)$, then there exists $\coeff\in \K$ such that
	\begin{equation}\label{eq68fbb9de}
		\beta v = \coeff \cdot \alpha v ,
		\qquad\forall v\in V .
	\end{equation}
\end{lemma}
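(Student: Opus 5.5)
We split into two cases according to whether $\alpha$ vanishes identically, and in the nontrivial case we pick a vector on which $\alpha$ equals $1$ and use it to decompose $V$.

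\emph{Case $\alpha=0$.} Then $\ker(\alpha)=V$, so the hypothesis $\ker(\alpha)\subset\ker(\beta)$ gives $\beta=0$. Any $\coeff\in\K$ works in~\eqref{eq68fbb9de}; for instance, take $\coeff=0$.

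\emph{Case $\alpha\neq0$.} Choose $v_0\in V$ with $\alpha v_0\neq0$. Rescaling $v_0$ by $(\alpha v_0)^{-1}$, which is allowed since $\K$ is a field, we may assume $\alpha v_0=1$. Set
\[
	\coeff:=\beta v_0 .
\]
For an arbitrary $v\in V$, write
\[
	v=(v-(\alpha v)\,v_0)+(\alpha v)\,v_0 .
\]
The first summand lies in $\ker(\alpha)$, because $\alpha(v-(\alpha v)v_0)=\alpha v-\alpha v\cdot\alpha v_0=0$. By hypothesis it therefore lies in $\ker(\beta)$. Linearity of $\beta$ then gives
\[
	\beta v=0+(\alpha v)\,\beta v_0=\coeff\cdot\alpha v ,
\]
which is~\eqref{eq68fbb9de}.

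Every step is elementary linear algebra. No dimension assumption is needed, because we use only the single vector $v_0$ and never a basis, so the argument holds for infinite-dimensional $V$ such as $C^\infty(N)$.
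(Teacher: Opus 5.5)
Your proof is correct and is essentially the paper's argument. The paper makes the same case split on $\alpha=0$, and in the nontrivial case it uses the same observation that the vector $(\alpha\hat v)v-(\alpha v)\hat v$ lies in $\ker(\alpha)\subset\ker(\beta)$, giving $f=\beta\hat v/\alpha\hat v$; your normalization $\alpha v_0=1$ is only a cosmetic difference.
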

\begin{proof}
	If $\alpha=0$, then $\beta=0$ as well and we can take $\coeff=0$.
	Suppose $\alpha\neq0$: then there is $\hat v\in V$ such that $\alpha \hat v\neq0$.
	If $v\in V$, then 
	$\alpha\left( (\alpha\hat v)v-(\alpha v)\hat v \right) = 0$ and the assumption $\ker(\alpha)\subset\ker(\beta)$ implies that
	$\beta\left( (\alpha\hat v)v-(\alpha v)\hat v \right) = 0$. It follows by linearity that $(\alpha\hat v)(\beta v)= (\alpha v)(\beta\hat v)$ 	which proves~\eqref{eq68fbb9de} with $\coeff= \beta\hat v / \alpha\hat v$.
\end{proof}

\begin{proposition}\label{prop68fbbcbb}
	Let $M,N$ be real manifolds and $F:M\to N$ a map of class $C^k$ for some $k\in\N$.
	Let $P_M$ and $P_N$ be differential operators of class $C^p$ and order $d$ on $M$ and $N$, respectively, with $p,d\le k$. 
	Assume that, for every $y\in N$, $P_N|_y\neq0$.
	Assume also that for every open set $U\subset N$ and $u\in C^d(U;\R)$, if $P_Nu\ge0$ on $U$, then $P_M(u\circ F)\ge0$ on $F^{-1}(U)$.
	
	Then there exists a function $\coeff:M\to [0,+\infty)$ of class $C^s$ with $s=\min\{k-d,p\}$ such that 
	\begin{equation}\label{eq68fbbe07}
		P_M\circ F^* = \coeff \cdot (F^*\circ P_N) .
	\end{equation}
	Equivalently, for every $U\subset N$ open and $u\in C^k(U)$,
	\begin{equation}
	P_M(u\circ F) = \coeff \cdot (P_Nu)\circ F
    \qquad\text{on }F^{-1}(U).
	\end{equation}
\end{proposition}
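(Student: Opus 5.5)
Fix $x\in M$ and set $y=F(x)$. The plan is to work pointwise: apply Lemma~\ref{lem68fbae06} to two linear functionals on the space $J$ of $d$-jets at $y$ of $C^k$ functions, obtain a coefficient $\coeff(x)$, and then study its sign and regularity. Since $P_N$ has order $d$, $\alpha(u):=(P_Nu)(y)$ depends only on the $d$-jet of $u$ at $y$. Since $F$ is $C^k$ with $k\ge d$, the chain rule (Faà di Bruno) shows that the $d$-jet of $u\circ F$ at $x$ depends only on the $d$-jet of $u$ at $y$. Hence $\beta(u):=P_M(u\circ F)(x)$ is also a linear functional on $J$. By hypothesis $\alpha\neq0$, so we may pick a polynomial $\hat u$ in local coordinates around $y$ with $\alpha(\hat u)\ne0$.

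The key step is to show that $\ker\alpha\subset\ker\beta$ using the sign hypothesis. Take $u$ with $(P_Nu)(y)=0$; without loss of generality $u$ is a polynomial in a chart, hence smooth. Since $P_N$ has continuous coefficients and $P_N\hat u(y)\neq0$, we may assume $P_N\hat u\ge c>0$ on a small neighbourhood $U$ of $y$, replacing $\hat u$ by $-\hat u$ if needed. For $\epsilon>0$, continuity of $P_Nu$ gives $|P_Nu|<\epsilon c$ on a smaller neighbourhood $U_\epsilon$. On $U_\epsilon$ we then have $P_N(u+\epsilon\hat u)\ge0$ and $P_N(-u+\epsilon\hat u)\ge0$. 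The hypothesis yields $P_M((\pm u+\epsilon\hat u)\circ F)\ge0$ on $F^{-1}(U_\epsilon)\ni x$, so $|\beta(u)|\le\epsilon\beta(\hat u)$. Letting $\epsilon\to0$ gives $\beta(u)=0$. Lemma~\ref{lem68fbae06} then provides $\coeff(x)=\beta(\hat u)/\alpha(\hat u)$ with $\beta=\coeff(x)\alpha$. The same argument with $\epsilon=0$ gives $\beta(\hat u)\ge0$ when $\alpha(\hat u)>0$, so $\coeff(x)\ge0$.

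Regularity is the main obstacle, because it has to be read off from the formula for $\coeff$. Near $x_0$, fix one jet $\hat u$ (a polynomial in a chart at $F(x_0)$) with $P_N\hat u\ne0$ on a neighbourhood $W$. Then $\coeff=P_M(\hat u\circ F)/\bigl((P_N\hat u)\circ F\bigr)$ on $F^{-1}(W)$.
\begin{itemize}
\item The numerator involves up to $d$ derivatives of $F\in C^k$ multiplied by $C^p$ coefficients, so it is $C^{\min\{k-d,p\}}$.
\item The denominator is a $C^p$ function composed with the $C^k$ map $F$, so it is $C^{\min\{p,k\}}$.
\end{itemize}
Hence $\coeff\in C^s$. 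Equation~\eqref{eq68fbbe07} then holds for all $u\in C^k(U)$, since both sides at $x$ depend only on the $d$-jet at $F(x)$, and every such jet is realized by a polynomial.
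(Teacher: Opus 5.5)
Your proof is correct and follows essentially the same route as the paper. It uses the same pointwise functionals $\alpha,\beta$, the same $\pm\epsilon$-perturbation by a function with $P_N\hat u>0$ near $y$ to get $\ker\alpha\subset\ker\beta$, then Lemma~\ref{lem68fbae06}, and the same quotient formula $f=P_M(\hat u\circ F)/\bigl((P_N\hat u)\circ F\bigr)$ for nonnegativity and $C^s$ regularity. The only difference is cosmetic: you let $\alpha,\beta$ act on $d$-jets realized by polynomials rather than on germs of $C^k$ functions, which is harmless and makes the ``equivalently'' clause immediate.
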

\begin{proof}
	Fix $x\in M$, put $y=Fx$, and let $C^k_y$ denote the set of germs of $C^k$ functions at $y$.
	Consider the two linear functionals $\alpha,\beta:C^k_y\to\R$ defined by:
	$\beta u = (P_M(u\circ F))x$ and $\alpha u = (P_Nu)y$.
	
	We claim that 
	\begin{equation}\label{eq6904f0e7}
		\ker(\alpha) \subset\ker(\beta) .
	\end{equation}
	Indeed, since $P_N|_y\neq0$, then there exists $w\in C^k_y$ such that $P_Nw(y)>0$.
	If $u\in C^k_y$ satisfies $\alpha u = 0$, then for every $\epsilon>0$, there is a  neighborhood of $y$ where $P_N(u+\epsilon w) = P_Nu + \epsilon P_Nw >0$. 
	It follows that for every $\epsilon>0$ there is a  neighborhood of $x$ where $P_M((u+ \epsilon w)\circ F)\ge0$ and
	we conclude that $P_M(u\circ F)x\ge0$.
	Similarly, for every $\epsilon>0$ there is a neighborhood of $y$ where $P_N(u-\epsilon w) = P_Nu - \epsilon P_Nw < 0$. 
	Hence for every $\epsilon>0$ there is a  neighborhood of $x$ where $P_M((u- \epsilon w)\circ F)\le 0$ and
	we conclude that $P_M(u\circ F)x\le0$.
	It follows that $\beta u =P_M(u\circ F)x=0$ which proves the claim at \eqref{eq6904f0e7}.
	
    By Lemma~\ref{lem68fbae06}, there exists $\coeff\in\R$ such that 
	$\beta=\coeff\alpha$.
	So, 
	there exists a function $\coeff:M\to\R$ such that, for every $U\subset N$ open and every $u\in C^k(U;\R)$, 
	$P_M(u\circ F) = \coeff \cdot ((P_Nu)\circ F) $,
	which proves \eqref{eq68fbbe07}.
	
	To check the regularity of $\coeff$ we begin by observing that if $x\in M$ then $P_N|_{F(x)}\neq0$,
	and so there is  a neighborhood $V$ of $F(x)$ in $N$ and $u\in C^k(V;\R)$ such that
	$P_Nu(y)>0$ for all $y\in V$.
	Therefore, $((P_Nu)\circ F)(z)>0$ for all $z\in F^{-1}(V)$, where $F^{-1}(V)$ is an open neighborhood of $x$ since $F$ is continuous.
	Identity~\eqref{eq68fbbe07} implies that, 
	\begin{equation}\label{eq6901eb82}
		\forall z\in F^{-1}(V),\qquad
		\coeff(z) = \frac{P_M(u\circ F)(z)}{ (P_Nu)\circ F (z)} \ge 0 .
	\end{equation}
	Set $s = \min\{k-d,p\}$.
	Notice that $P_Nu\in C^s(V;\R)$ and thus $(P_Nu)\circ F\in C^s(F^{-1}(V);\R)$.
	Moreover, $u\circ F\in C^k(F^{-1}(V);\R)$ and thus \[P_M(u\circ F) \in C^s(F^{-1}(V);\R).\]
	We conclude that $\coeff\in C^s(F^{-1}(V);\R)$.
\end{proof}

\subsection{Proof of Theorem~\ref{thm699465f3}}

	It is clear that $\ref{thm699465f3_item4}\THEN\ref{thm699465f3_item1}$.
	We need to show the reverse implication $\ref{thm699465f3_item1}\THEN\ref{thm699465f3_item4}$.
	
	First of all, we note that $M$ and $N$ are Brelot harmonic spaces by Proposition~\ref{prop69947463}.
	If $F$ is a harmonic morphism in the sense of Section~\ref{subs699474ca}, 
	then Proposition~\ref{prop6947b0c3} implies that $F$ pulls back superharmonic functions from $N$ to $M$.
	Proposition~\ref{prop69947448} ensures that the hypothesis of Proposition~\ref{prop68fbbcbb} are satisfied and thus we get~\eqref{eq699465b3} from~\eqref{eq68fbbe07}.
	
	Proposition~\ref{prop68fbbcbb} implies also that $f\ge0$ and that $f$ is of class $C^{k-2}$, which concludes the proof of Theorem~\ref{thm699465f3}.
\qed

\section{Harmonic morphisms of sub-Riemannian Lie groups}\label{sec6a7ee0f3}

In this section we prove Theorem~\ref{thm6a7ae38c}.
The main ingredients of the proof are Theorem~\ref{thm699465f3} and \cite[Theorem A]{2025arXiv250100576K}.
We will restate the latter in Proposition~\ref{prop6763089e} using the harmonic morphism operator defined in Section~\ref{subs6a7ee082}.

\subsection{The harmonic morphism operator}\label{subs6a7ee082}
Let $G$ and $H$ be sub-Riemannian Lie groups, $\Omega\subset G$ open, and $F:\Omega\to H$ a $C^2$ conformal submersion of factor $\lambda$.
The \emph{harmonic morphism operator} of $F$ is the function
\begin{equation}\label{hh:eq:T}
	\Tt(F):=\tr_G(\Diff^2F)-\Diff F[\mG]+\lambda^2\,\mH
	\ :\ \Omega\to V(H) ,
\end{equation}
where $\mG\in V(G)$ and $\mH\in V(H)$ are the modular vectors of $G$ and $H$.
By~\eqref{eq6a7ee0a1}, the operator $\Tt(F)$ depends only on $F$.
Equation~\eqref{eq6a7c3003} of Theorem~\ref{thm6a7ae38c} reads as $\Tt(F)=0$.

\subsection{Symmetries of the sub-Riemannian Laplacian}\label{subs6a8df50d}

We recall a result from~\cite{2025arXiv250100576K} and show a consequence we need for harmonic morphisms in Corollary~\ref{cor6a7ec445}.

\begin{proposition}[{\cite[Theorem A]{2025arXiv250100576K}}]\label{prop6763089e}
	Let $G$ and $H$ be sub-Riemannian Lie groups, $\Omega_G\subset G$ and $\Omega_H\subset H$ open,
	$F:\Omega_G\to \Omega_H$ a $C^2$-smooth map,
	and $\lambda:\Omega_G\to[0,+\infty)$, $b:\Omega_G\to V(H)$ and $c:\Omega_G\to\R$ continuous functions.
	The following statements are equivalent:
	\begin{enumerate}[label=(\roman*)]
	\item\label{thm6763089e_1}
	for every $v\in C^2(\Omega_H)$,
	\begin{equation}\label{eq6a7ee3a2}
		\laplacian_G(v\circ F) = \lambda^2 \cdot (\laplacian_Hv)\circ F
		+ \langle b , (\grad_Hv)\circ F \rangle_H + c \cdot (v\circ F) ;
	\end{equation}
	\item\label{thm6763089e_2}
	$F$ is a conformal submersion of factor $\lambda$, $c\equiv0$ and $b = \Tt(F)$.
	\end{enumerate}
\end{proposition}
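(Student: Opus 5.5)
The plan is a direct computation. We expand $\laplacian_G(v\circ F)$ with the left-invariant formula \eqref{hh:eq:laplacian} and compare it with the right-hand side of \eqref{eq6a7ee3a2}. Fix orthonormal bases $X_1,\dots,X_m$ of $V(G)$ and $Y_1,\dots,Y_n$ of $V(H)$, and complete the latter to a basis $Y_1,\dots,Y_N$ of $\frk h$. By the chain rule for Lie differentials we have $\tilde X_i(v\circ F)(p)=\Diff v(F(p))[\Diff F(p)[X_i]]$. Differentiating once more along $\tilde X_i$ gives
\[
\tilde X_i^2(v\circ F)=\Diff^2v(F)\bigl[\Diff F[X_i],\Diff F[X_i]\bigr]+\Diff v(F)\bigl[\tilde X_i(\Diff F[X_i])\bigr].
\]
Here one must check which slot of the non-symmetric $\Diff^2v$ receives which vector. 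Since both vectors are equal, the order of slots is irrelevant here. Writing $A_i=\Diff F[X_i]$ and using \eqref{eq6a7ee065}, we obtain
\[
\laplacian_G(v\circ F)=\sum_i\Diff^2v(F)[A_i,A_i]+\Diff v(F)\bigl[\tr_G(\Diff^2F)-\Diff F[\mG]\bigr].
\]
This holds for any $C^2$ map $F$, without the contact assumption, provided $\tr_G(\Diff^2F)$ is read as $\sum_i\tilde X_iA_i\in\frk h$.

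For (ii)$\Rightarrow$(i), assume $F$ is a conformal submersion of factor $\lambda$. Then $A_i\in V(H)$, so $\Diff^2v(F)$ acts as a bilinear form on $V(H)$. Taking $P$ to be the endomorphism of $V(H)$ associated with the bilinear form $(a,b)\mapsto\Diff^2v(F)[b,a]$ restricted to $V(H)$, \eqref{hh:eq:tighttrace} gives $\sum_i\Diff^2v(F)[A_i,A_i]=\lambda^2\sum_a\Diff^2v(F)[Y_a,Y_a]$. By \eqref{hh:eq:laplacian} on $H$, this equals $\lambda^2\bigl((\laplacian_Hv)\circ F+\Diff v(F)[\mH]\bigr)$. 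Since the vector $\Tt(F)$ lies in $V(H)$, $\Diff v(F)[\Tt(F)]=\langle\Tt(F),(\grad_Hv)\circ F\rangle_H$, and \eqref{eq6a7ee3a2} follows with $b=\Tt(F)$ and $c=0$. The claim that $\Tt(F)$ is $V(H)$-valued also uses the contact property, so that $\tilde X_iA_i\in V(H)$.

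For (i)$\Rightarrow$(ii), we test \eqref{eq6a7ee3a2} against well-chosen functions $v$ near a point $q=F(p)$, using coordinates of exponential type $v=\xi\circ\log(q^{-1}\cdot)$, whose Lie jets at $q$ can be prescribed freely.
\begin{itemize}
\item Constants $v\equiv1$ give $c\equiv0$.
\item Functions with vanishing second Lie jet at $q$ and $\Diff v(q)=\xi\in\frk h^*$ arbitrary give the first-order identity $\xi\bigl[\tr_G(\Diff^2F)-\Diff F[\mG]\bigr]=-\lambda^2\xi[\mH]+\langle b,\grad_H\xi\rangle$. Choosing $\xi$ that vanishes on $V(H)$ shows that the trace term has no $\frk h/V(H)$ component.
\item Quadratic functions $v=\tfrac12\eta^2$ with $\eta\in\frk h^*$ have $\Diff v(q)=0$ and symmetric second jet $\eta\otimes\eta$ (up to lower-order terms fixed by the previous step). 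They yield $\sum_i\eta(A_i)^2=\lambda^2\sum_a\eta(Y_a)^2$ for all $\eta\in\frk h^*$.
\end{itemize}
Taking $\eta$ vanishing on $V(H)$ forces $\eta(A_i)=0$, so $F$ is contact. Polarizing in $\eta$ then gives \eqref{hh:eq:tight}, so Lemma~\ref{hh:lem:tight} shows that $\Diff F|_{V(G)}$ is a homothetic projection of factor $\lambda$. With this established, the first-order identity reads $b=\Tt(F)$.

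The main obstacle is bookkeeping the second-order Lie jets. The second Lie differential of a function at a point is not symmetric: its antisymmetric part is $\Diff v[\,[\cdot,\cdot]\,]$. So in the quadratic test one must separate the symmetric part and verify that the antisymmetric contribution drops out, since it is traced against the symmetric tensor $\sum_iA_i\otimes A_i$. One must also ensure that the test functions are genuinely $C^2$ near $q$ with prescribed jets, which the exponential chart provides.
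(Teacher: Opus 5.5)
The paper contains no proof of this statement. It is quoted as \cite[Theorem A]{2025arXiv250100576K} and then used as a black box, for Corollary~\ref{cor6a7ec445} and Theorem~\ref{thm6a7ae38c}. Your proposal is a correct, self-contained proof, so the comparison is between a citation and a direct computation.

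Your first step is the identity
\[
\laplacian_G(v\circ F)=\sum_{i=1}^m\Diff^2v(F)[A_i,A_i]+\Diff v(F)\Bigl[\sum_{i=1}^m\tilde X_iA_i-\Diff F[\mG]\Bigr],
\]
valid for arbitrary $C^2$ maps $F$. Combined with Lemma~\ref{hh:lem:tight}, it gives (ii)$\Rightarrow$(i). In fact it gives the composition formula of Corollary~\ref{cor6a7ec445} directly, without passing through the equivalence.

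For (i)$\Rightarrow$(ii) you make explicit the $2$-jet reasoning that the paper uses only informally in the proof of Theorem~\ref{thm6a7ae38c}. The identity at $p$ is linear in the $2$-jet of $v$ at $F(p)$, and you separate it by order:
\begin{itemize}
\item constants force $c\equiv0$;
\item the symmetric second-order part forces the homothetic-projection condition, with contactness coming from $\eta\in\Ann(\Ee)$;
\item the first-order part forces $b=\Tt(F)$.
\end{itemize}
Your route relies only on Lemma~\ref{hh:lem:modular}, the formula $\tr_G(\Diff^2F)=\sum_i\tilde X_iA_i$ and Lemma~\ref{hh:lem:tight} from this paper. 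It also shows exactly which jet of $v$ is responsible for each condition in (ii). The citation buys brevity.

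Three details should be phrased more carefully.

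First, a function with $\Diff v(q)=\xi$ cannot have vanishing second Lie differential unless $\xi$ vanishes on $[\frk h,\frk h]$. This is because
\[
\Diff^2v(q)[w_1,w_2]-\Diff^2v(q)[w_2,w_1]=\xi([w_2,w_1]).
\]
What you need is a vanishing symmetric part, and $v=\xi\circ\log(q^{-1}\,\cdot\,)$ provides exactly that. By Baker--Campbell--Hausdorff, $\Diff^2v(q)[w_1,w_2]=\tfrac12\xi([w_2,w_1])$. This is invisible to the diagonal sums $\sum_i\Diff^2v(q)[A_i,A_i]$ and $\sum_a\Diff^2v(q)[Y_a,Y_a]$.

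Second, for $v=\tfrac12\bigl(\eta\circ\log(q^{-1}\,\cdot\,)\bigr)^2$ one gets exactly $\Diff v(q)=0$ and $\Diff^2v(q)[w_1,w_2]=\eta(w_1)\eta(w_2)$. So no lower-order correction is needed.

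Third, these test functions live only near $q$. Multiply them by a cutoff equal to $1$ near $q$ to obtain elements of $C^2(\Omega_H)$. This is harmless: since $F$ is continuous, both sides of~\eqref{eq6a7ee3a2} at $p$ depend only on the germ of $v$ at $F(p)$.
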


\begin{corollary}[Composition formula]\label{cor6a7ec445}
	Let $G$ and $H$ be sub-Riemannian Lie groups, $\Omega\subset G$ open,
	and $F:\Omega\to H$ a $C^2$ conformal submersion of factor $\lambda:\Omega\to[0,+\infty)$.
	Then, for every every $v\in C^2(H)$ and every $p\in\Omega$,
	\begin{equation}\label{hh:eq:composition}
		\laplacian_G(v\circ F)(p)
		=\lambda(p)^2\,(\laplacian_Hv)(F(p))
		+\Diff v(F(p))\bigl[\Tt(F)(p)\bigr] .
	\end{equation}
	Consequently, $F$ is a harmonic morphism if and only if $\Tt(F)\equiv0$.
\end{corollary}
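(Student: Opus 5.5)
The plan is to apply Proposition~\ref{prop6763089e} with the specific choices $b=\Tt(F)$ and $c\equiv0$, and then read off the harmonic-morphism criterion from the composition formula.

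\textbf{Composition formula.} Since $F$ is a $C^2$ conformal submersion of factor $\lambda$, the harmonic morphism operator $\Tt(F)$ of~\eqref{hh:eq:T} is a well-defined continuous map $\Omega\to V(H)$. Indeed, $\Diff^2F$ is continuous for $F\in C^2$, and $\lambda$ is continuous by~\eqref{eq6a7ee0a1}, because $\lambda^2=\frac1n\sum_i|A_i|_H^2$ with $A_i$ continuous. Set $b:=\Tt(F)$ and $c:\equiv0$. Then statement~\ref{thm6763089e_2} of Proposition~\ref{prop6763089e} holds, with $\Omega_G=\Omega$ and $\Omega_H=H$ (or any open set containing $F(\Omega)$). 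The implication \ref{thm6763089e_2}$\THEN$\ref{thm6763089e_1} then gives, for every $v\in C^2(H)$,
\[
\laplacian_G(v\circ F)=\lambda^2(\laplacian_Hv)\circ F+\langle \Tt(F),(\grad_Hv)\circ F\rangle_H .
\]
It remains to rewrite the last term. By definition of the horizontal gradient, $\langle w,\grad_Hv(q)\rangle_H=\Diff v(q)[w]$ for every $w\in V(H)$. Since $\Tt(F)(p)\in V(H)$, the last term equals $\Diff v(F(p))[\Tt(F)(p)]$, which is~\eqref{hh:eq:composition}.

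\textbf{The harmonic-morphism criterion.} Suppose first that $\Tt(F)\equiv0$. Then~\eqref{hh:eq:composition} shows that harmonic $v$ pull back to harmonic $v\circ F$. This needs to hold for $v$ defined only on open subsets $U\subset H$, not just on all of $H$. To get it, apply Proposition~\ref{prop6763089e} with $\Omega_H=U$ and $\Omega_G=F^{-1}(U)$.

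Conversely, suppose $F$ is a harmonic morphism. Fix $p\in\Omega$ and set $q=F(p)$. I need to show that the linear functional $w\mapsto\Diff v(q)[w]$ detects every $w\in V(H)$ using harmonic functions $v$ near $q$. Theorem~\ref{thm6a7ee20d} supplies harmonic coordinates $u_1,\dots,u_N$ at $q$, and their differentials span $T_q^*H$. For each $k$, applying~\eqref{hh:eq:composition} to $u_k$ gives $0=\laplacian_G(u_k\circ F)(p)=\Diff u_k(q)[\Tt(F)(p)]$. Since the $\Diff u_k(q)$ span $\frk h^*$, it follows that $\Tt(F)(p)=0$.

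\textbf{Main obstacle.} The only delicate point is this localization. Theorem~\ref{thm6a7ee20d} gives harmonic functions defined only near $q$, so~\eqref{hh:eq:composition} has to be used with $v$ defined on a neighbourhood rather than on all of $H$. I would handle this as above: invoke Proposition~\ref{prop6763089e} on the restricted domains $\Omega_H=U$, $\Omega_G=F^{-1}(U)$, which is allowed since the proposition is stated for arbitrary open sets. Alternatively, one can multiply $u_k$ by a cutoff, which does not change second derivatives at $q$; this is valid because~\eqref{hh:eq:composition} is pointwise in second-order jets of $v$ at $F(p)$.
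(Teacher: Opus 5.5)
Your proposal is correct. The derivation of \eqref{hh:eq:composition} via Proposition~\ref{prop6763089e} with $b=\Tt(F)$, $c\equiv0$, and the implication ``$\Tt(F)\equiv0$ $\THEN$ harmonic morphism'', follow the paper's argument. You are slightly more careful than the paper here: you check continuity of $\lambda$ through \eqref{eq6a7ee0a1}, and you localize to $\Omega_H=U$, $\Omega_G=F^{-1}(U)$, which the paper leaves implicit.

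For the converse you take a genuinely different route. The paper goes through Theorem~\ref{thm699465f3}. That is, it uses the potential-theoretic facts that harmonic morphisms pull back superharmonic functions (Proposition~\ref{prop6947b0c3}) and the characterization of $C^2$ superharmonic functions (Proposition~\ref{prop69947448}), and obtains $\laplacian_G(v\circ F)=f\,(\laplacian_Hv)\circ F$. It then applies the rigidity direction \ref{thm6763089e_1}$\THEN$\ref{thm6763089e_2} of Proposition~\ref{prop6763089e} with $(\sqrt f,0,0)$, and finally uses uniqueness of the conformal factor to get $\sqrt f=\lambda$. You instead test the composition formula you have just proved on the harmonic coordinates of Theorem~\ref{thm6a7ee20d}. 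The $\lambda^2$ term vanishes and the $\Diff u_k(q)$ span $\frk h^*$, so $\Tt(F)(p)=0$.

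For the statement as posed, your route is shorter:
\begin{itemize}
\item it needs only the computational direction \ref{thm6763089e_2}$\THEN$\ref{thm6763089e_1} of Proposition~\ref{prop6763089e};
\item it avoids the connectedness and non-constancy hypotheses of Proposition~\ref{prop6947b0c3}, which the paper passes over silently;
\item it needs no uniqueness-of-factor step, because conformality is already assumed.
\end{itemize}
The cost is a dependence on harmonic coordinates, hence on the harmonic homogeneity of $H$ and on solvability of the Dirichlet problem on regular domains.

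The paper's route, in turn, does not use conformality as an input to obtain $\Tt(F)=0$. It recovers both the conformal submersion property and the identification $f=\lambda^2$ from the harmonic-morphism property alone. This is the same mechanism used for \ref{thm6a7ae38c_item1}$\THEN$\ref{thm6a7ae38c_item3} in Theorem~\ref{thm6a7ae38c}.
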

%
\begin{proof}
	Apply Proposition~\ref{prop6763089e} to $F$ with $b:=\Tt(F)$, which is a continuous $V(H)$-valued function, and with $c\equiv0$:
	statement~\ref{thm6763089e_2} holds by assumption, and thus~\eqref{eq6a7ee3a2} holds.
	Since $\Tt(F)(p)\in V(H)$, the definition of the horizontal gradient gives
	$\langle \Tt(F)(p),\grad_Hv(F(p))\rangle_H = \Diff v(F(p))[\Tt(F)(p)]$,
	and~\eqref{hh:eq:composition} follows.

	If $\Tt(F)\equiv0$, then~\eqref{hh:eq:composition} shows that $v\circ F$ is harmonic whenever $v$ is harmonic, that is, $F$ is a harmonic morphism.
	Conversely, suppose that $F$ is a harmonic morphism.
	By Theorem~\ref{thm699465f3}, there is $\coeff:\Omega\to[0,+\infty)$ such that $\laplacian_G(v\circ F) = \coeff\cdot(\laplacian_Hv)\circ F$ for every $v\in C^2(H)$;
	the function $\coeff$ is continuous, and so is $\sqrt{\coeff}$.
	Hence statement~\ref{thm6763089e_1} of Proposition~\ref{prop6763089e} holds with $\sqrt{\coeff}$, $b=0$ and $c=0$ in place of $\lambda$, $b$ and $c$.
	Statement~\ref{thm6763089e_2} then implies that $F$ is a conformal submersion of factor $\sqrt f$ and that $\Tt(F)=0$.
	By~\eqref{eq6a7ee0a1}, the factor of a conformal submersion is unique, so that $\sqrt{\coeff}=\lambda$.
\end{proof}

\subsection{Proof of Theorem~\ref{thm6a7ae38c}}

	\ref{thm6a7ae38c_item1}$\THEN$\ref{thm6a7ae38c_item2}.
	By Corollary~\ref{cor6995a31c}, the map $F$ is $C^\infty$-smooth.
	Theorem~\ref{thm699465f3} then gives a function $\coeff:\Omega\to[0,+\infty)$ such that $\laplacian_G(v\circ F) = \coeff\cdot(\laplacian_Hv)\circ F$ for every $v\in C^\infty(H)$.
	Set $\lambda:=\sqrt{\coeff}$.

	\ref{thm6a7ae38c_item2}$\THEN$\ref{thm6a7ae38c_item3}.
	Both sides of~\eqref{eq6a7ae3d9} at a point $p\in\Omega$ depend only on the $2$-jet of $v$ at $F(p)$, and every $2$-jet at a point of $H$ is the $2$-jet of a function in $C^\infty(H)$.
	Therefore, statement~\ref{thm6a7ae38c_item2} implies statement~\ref{thm6763089e_1} of Proposition~\ref{prop6763089e} with $b=0$ and $c=0$, for every $\Omega_H\subset H$ open.
	The function $\lambda$ is continuous because $\lambda^2 = \laplacian_G(v\circ F)/(\laplacian_Hv)\circ F$ for any $v\in C^\infty(H)$ with $\laplacian_Hv>0$ near $F(p)$.
	Hence statement~\ref{thm6763089e_2} of Proposition~\ref{prop6763089e} holds, that is, $F$ is a conformal submersion of factor $\lambda$ and $\Tt(F)=0$, which is~\eqref{eq6a7c3003}.

	\ref{thm6a7ae38c_item3}$\THEN$\ref{thm6a7ae38c_item1}.
	This is the last statement of Corollary~\ref{cor6a7ec445}.
\qed

\section{Harmonicity of harmonic morphisms: the first variation for harmonic morphisms}\label{hh:sec}

The theorem of Fuglede~\cite{zbMATH03530589} and Ishihara~\cite{MR545705} states that a map between Riemannian manifolds is a harmonic morphism if and only if it is a conformal submersion (a.k.a.~semiconformal maps, see Section~\ref{sec670d0abe}) and a harmonic map.
In the following three sections we discuss the corresponding question in the setting of sub-Riemannian Lie groups:
\emph{is a harmonic morphism a harmonic map, i.e., a critical point of the horizontal energy~\eqref{hh:eq:energy}?}
Conjecturally, is the equation $\Tt(F)=0$ in Theorem~\ref{thm6a7ae38c} the Euler--Lagrange equation of the horizontal energy among contact maps?
Such Euler--Lagrange equations have been obtained by Grong and Markina in~\cite[Theorem 4.2]{zbMATH07887823}.

%

The answer is negative, and the obstruction is a linear form on the Lie algebra of the target alone, the modular mismatch $\chi$ of Section~\ref{hh:subs:chi}.
The crucial hinge to this section is the pointwise identity of Proposition~\ref{hh:prop:key}, which we prove in Section~\ref{hh:subs:variation}:
it computes the first variation of the horizontal energy along a conformal submersion in terms of $\Tt(F)$ and of $\chi$.
From it we deduce that harmonic morphisms are harmonic maps whenever $\chi=0$ for some complement of the polarization of the target (Section~\ref{hh:subs:positive}), which is the case for Riemannian targets and for Carnot targets, and that this fails in general (Section~\ref{hh:subs:counterexample}).
In Section~\ref{hh:subs:GM} we compare our identity with the Euler--Lagrange equations of~\cite{zbMATH07887823}.

\begin{remark}\label{hh:rem:why}
The reason for the discrepancy is already visible in the definitions.
The horizontal energy~\eqref{hh:eq:energy} does not see the measure of the target: only the scalar product on $V(H)$ enters it.
The harmonic morphism equation~\eqref{eq6a7c3003}, on the contrary, sees $\vol_H$ through the modular vector $\mH$, because $\laplacian_H$ does.
The two agree in particular when the modular data of $H$ are compatible with a splitting $\frk h=\Ee\oplus\Qq$, which is exactly the condition $\chi=0$.
In the Riemannian case there is no splitting to be compatible with;
in the Carnot case the group is unimodular \emph{and} the polarization is the bottom layer of a grading.
Both times the mismatch is invisible.
\end{remark}

\subsection{The modular mismatch}\label{hh:subs:chi}

\begin{definition}\label{hh:def:chi}
Let $H$ be a polarized Lie group with polarization $\Ee\subset\frk h$
and let $\Qq\subset\frk h$ be a complement of $\Ee$ in $\frk h$, that is,
\begin{equation}\label{hh:eq:splitting}
	\frk h=\Ee\oplus\Qq ,
\end{equation}
with projections $\pi_{\Ee}:\frk h\to\Ee$ and $\pi_{\Qq}:\frk h\to\Qq$.
The \emph{horizontal} and the \emph{vertical modular characters} of the splitting~\eqref{hh:eq:splitting} are the linear forms $\kae,\kaq\in\frk h^*$,
\begin{equation}\label{hh:eq:kappas}
	\kae(B):=\tr\bigl(\pi_{\Ee}\circ\ad_B|_{\Ee}\bigr),
	\qquad
	\kaq(B):=\tr\bigl(\pi_{\Qq}\circ\ad_B|_{\Qq}\bigr),
	\qquad \forall B\in\frk h ,
\end{equation}
so that $\kae+\kaq=\kappa_H$.
The \emph{modular mismatch} of the splitting~\eqref{hh:eq:splitting} is
\begin{equation}\label{hh:eq:chi}
	\chi:=\kappa_H\circ\pi_{\Ee}-\kae
	=\kaq\circ\pi_{\Ee}-\kae\circ\pi_{\Qq}
	\ \in\frk h^* .
\end{equation}
We say that the complement $\Qq$ is \emph{modularly balanced} if $\chi=0$, that is, if
\begin{equation}\label{hh:eq:balanced}
	\tr\bigl(\pi_{\Ee}\ad_Z|_{\Ee}\bigr)=0\quad\forall Z\in\Qq ,
	\quad\text{and}\quad
	\tr\bigl(\pi_{\Qq}\ad_B|_{\Qq}\bigr)=0\quad\forall B\in\Ee .
\end{equation}
\end{definition}

Notice that $\chi$ depends only on the polarized Lie algebra $(\frk h,\Ee)$ and on the choice of $\Qq$:
it involves neither a map $F$, nor a source group $G$, nor the scalar products.

\begin{proposition}\label{hh:prop:examples}
The following polarized Lie algebras $(\frk h,\Ee)$ admit a modularly balanced complement:
\begin{enumerate}[label=(\alph*)]
\item\label{hh:it:riem} \emph{Riemannian Lie groups}: if $\Ee=\frk h$;
\item\label{hh:it:ideal} if $\Ee$ admits a complement $\Qq$ that is an ideal of $\frk h$ with $\tr(\ad_B|_{\Qq})=0$ for every $B\in\Ee$;
\item\label{hh:it:carnot} \emph{Carnot groups}: if $\frk h$ is stratified by $\frk h=V_1\oplus\dots\oplus V_s$ and $\Ee=V_1$;
\item\label{hh:it:unimod} \emph{Unimodular Lie groups with trace-free horizontal action}: if $\kappa_H=0$ and $\kae=0$ for some complement $\Qq$.
\end{enumerate}
\end{proposition}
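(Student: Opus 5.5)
The plan is to verify, case by case, that the two conditions in~\eqref{hh:eq:balanced} hold for an explicit choice of $\Qq$. By~\eqref{hh:eq:chi}, $\chi=\kaq\circ\pi_{\Ee}-\kae\circ\pi_{\Qq}$, so $\chi=0$ is equivalent to $\kae|_{\Qq}=0$ together with $\kaq|_{\Ee}=0$. Each case then reduces to a trace computation with a block decomposition of $\ad_B$ with respect to $\frk h=\Ee\oplus\Qq$.

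\ref{hh:it:riem} Take $\Qq=\{0\}$. Then $\pi_{\Qq}=0$, so $\kaq=0$. Since $\Qq=0$, the first condition in~\eqref{hh:eq:balanced} is vacuous. Hence $\chi=0$.

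\ref{hh:it:ideal} Since $\Qq$ is an ideal, $\ad_B(\Qq)\subset\Qq$, so $\pi_{\Qq}\ad_B|_{\Qq}=\ad_B|_{\Qq}$, and its trace vanishes for $B\in\Ee$ by hypothesis. For $Z\in\Qq$ we have $\ad_Z(\frk h)\subset\Qq$, because $[Z,\cdot]$ lands in the ideal. Therefore $\pi_{\Ee}\ad_Z|_{\Ee}=0$ and its trace is zero. Both conditions hold.

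\ref{hh:it:carnot} Take $\Qq=V_2\oplus\dots\oplus V_s$. This is an ideal, since $[V_i,V_j]\subset V_{i+j}$ and $i+j\ge2$. For $B\in V_1$, $\ad_B$ maps $V_j$ into $V_{j+1}$, so it is strictly block-upper-shifting on $\Qq$ and therefore nilpotent. Its trace on $\Qq$ vanishes, and part~\ref{hh:it:ideal} applies.

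\ref{hh:it:unimod} The hypotheses give $\kappa_H=0$ and $\kae=0$ for the chosen $\Qq$. Then $\kaq=\kappa_H-\kae=0$ on all of $\frk h$, so in particular $\kaq\circ\pi_{\Ee}=0$ and $\kae\circ\pi_{\Qq}=0$. Alternatively, directly from the first expression in~\eqref{hh:eq:chi}, $\chi=\kappa_H\circ\pi_{\Ee}-\kae=0$.

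None of these steps is a real obstacle. The only point needing care is in~\ref{hh:it:ideal}: one must observe that the ideal property kills the off-diagonal block $\pi_{\Ee}\ad_Z|_{\Ee}$ for $Z\in\Qq$, and is not merely used to make $\Qq$ invariant under $\ad_B$. In~\ref{hh:it:carnot} one should check that $V_1$ is indeed a complement with the required trace property, which follows from the grading.
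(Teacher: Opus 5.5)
Your proof is correct and takes the same approach as the paper, case by case: $\Qq=\{0\}$ in the Riemannian case; the ideal property giving both $\pi_{\Ee}\ad_Z|_{\Ee}=0$ for $Z\in\Qq$ and $\kaq(B)=\tr(\ad_B|_{\Qq})$; the reduction of the Carnot case to (b) via nilpotency of $\ad_B|_{\Qq}$ for $\Qq=V_2\oplus\dots\oplus V_s$; and $\kaq=\kappa_H-\kae=0$ in the unimodular case.
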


\begin{proof}
\ref{hh:it:riem} Take $\Qq=\{0\}$: then $\pi_{\Qq}=0$ and $\kaq=0$, so $\chi=0$.

\ref{hh:it:ideal} If $\Qq$ is an ideal and $Z\in\Qq$, then $[Z,\Ee]\subseteq\Qq$, hence $\pi_{\Ee}\ad_Z|_{\Ee}=0$ and $\kae|_{\Qq}=0$.
Moreover $\ad_B(\Qq)\subseteq\Qq$ for every $B\in\frk h$, so $\kaq(B)=\tr(\ad_B|_{\Qq})$, which vanishes on $\Ee$ by assumption.

\ref{hh:it:carnot} Take $\Qq=V_2\oplus\dots\oplus V_s$, which is an ideal; for $B\in V_1$ the map $\ad_B$ sends $V_j$ into $V_{j+1}$, so $\ad_B|_{\Qq}$ is nilpotent and traceless.  Apply~\ref{hh:it:ideal}.

\ref{hh:it:unimod} $\kaq=\kappa_H-\kae=0$, so $\chi=0$ by~\eqref{hh:eq:chi}.
%
\end{proof}

\subsection{The first variation of the horizontal energy}\label{hh:subs:variation}

We first compute the derivative of the Lie differential along a variation.
Recall from Section~\ref{subs6a7ee0c8} that all the maps of a variation are contact maps.

\begin{lemma}\label{hh:lem:firstvar}
Let $G$ and $H$ be sub-Riemannian Lie groups, $\Omega\subset G$ open, $X_1,\dots,X_m$ an orthonormal basis of $V(G)$, and let $F:\Omega\to H$ be a smooth contact map with $A_1,\dots,A_m$ as in~\eqref{hh:eq:Ai}.
Let $(F_s)_{s\in(-\epsilon,\epsilon)}$  
be a variation of $F$ with variation field $\varphi$.
Then 
\begin{equation}\label{hh:eq:dalpha}
	\left.\frac{\diff}{\diff s}\right|_{s=0}\Diff F_s[v]
	=\tilde v\varphi+[\Diff F_s[v],\varphi] ,
	\qquad\forall v\in\frk g.
\end{equation}
This implies that 
\begin{equation}\label{hh:eq:admissible}
	\tilde v\varphi+[\Diff F_s[v],\varphi]\in\Ee,
	\qquad\forall v\in V(G),
\end{equation}
and
\begin{equation}\label{hh:eq:firstvar}
	\left.\frac{\diff}{\diff s}\right|_{s=0}\Energy(F_s)
	=\int_\Omega\sum_{i=1}^m\bigl\langle A_i,\tilde X_i\varphi+[A_i,\varphi]\bigr\rangle_H\did\vol_G .
\end{equation}
\end{lemma}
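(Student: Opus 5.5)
Write $\Phi(p,s):=F(p)^{-1}F_s(p)$, a smooth map $\Omega\times(-\epsilon,\epsilon)\to H$ with $\Phi(p,0)=1_H$ and $\partial_s\Phi(p,0)=\varphi(p)$. The plan is to express $\Diff F_s[v]$ through $\Diff F[v]$ and $\Phi$, and then differentiate at $s=0$. Since $F_s(p)=F(p)\Phi(p,s)$, we have
\[
	F_s(p)^{-1}F_s(p\exp(tv))=\Phi(p,s)^{-1}\,F(p)^{-1}F(p\exp(tv))\,\Phi(p\exp(tv),s).
\]
Differentiating in $t$ at $t=0$ with the product rule in $H$ gives
\[
	\Diff F_s(p)[v]=\Ad_{\Phi(p,s)^{-1}}\Diff F(p)[v]+\Diff\Phi(\cdot,s)(p)[v],
\]
where the last term is the Lie differential in $p$ of $p\mapsto\Phi(p,s)$ at fixed $s$, that is, $\diff L_{\Phi(p,s)^{-1}}$ applied to $\tilde v_p\Phi(\cdot,s)$.

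Next I differentiate each term in $s$ at $s=0$. For the first term, $\frac{\diff}{\diff s}\big|_{s=0}\Ad_{\Phi(p,s)^{-1}}=-\ad_{\varphi(p)}$, so it contributes $-[\varphi,\Diff F[v]]=[\Diff F[v],\varphi]$. For the second term, $\Phi(\cdot,0)\equiv1_H$, so $\tilde v\Phi(\cdot,0)=0$. Then
\[
	\partial_s\big|_{s=0}\Bigl(\diff L_{\Phi^{-1}}\,\tilde v\Phi\Bigr)=\tilde v\,\partial_s\Phi\big|_{s=0}=\tilde v\varphi,
\]
because the term involving the derivative of $\diff L_{\Phi^{-1}}$ multiplies $\tilde v\Phi(\cdot,0)=0$. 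The equality of mixed partials holds because $\Phi$ is smooth; one can check it in a chart or by viewing $H$ inside $GL$ locally. This yields~\eqref{hh:eq:dalpha}, with $\Diff F_s$ on the right-hand side evaluated at $s=0$, i.e.\ equal to $\Diff F$. The bookkeeping in this step, namely the signs in $\Ad$ and the vanishing of the cross term, is the only real obstacle. I will handle it by working with $\Phi$ as above rather than with $F_s$ directly.

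For~\eqref{hh:eq:admissible}, fix $v\in V(G)$. Each $F_s$ is a contact map, so $\Diff F_s(p)[v]\in\Ee$ for all $s$. The derivative of a curve in the fixed linear subspace $\Ee$ lies in $\Ee$, and the claim follows.

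For~\eqref{hh:eq:firstvar}, choose a compact $K$ outside of which $F_s=F$. The integrand $\frac12\sum_i|\Diff F_s[X_i]|_H^2$ is smooth in $(p,s)$, so differentiation under the integral over $K$ is justified. The derivative at $s=0$ is $\sum_i\langle A_i,\tfrac{\diff}{\diff s}\Diff F_s[X_i]\rangle_H$, which by~\eqref{hh:eq:dalpha} equals $\sum_i\langle A_i,\tilde X_i\varphi+[A_i,\varphi]\rangle_H$. This is well defined because, by~\eqref{hh:eq:admissible}, both entries lie in $\Ee$, where the scalar product is defined. The integrand vanishes outside $K$, since $\varphi$ and its derivatives vanish there, so integrating over $\Omega$ gives~\eqref{hh:eq:firstvar}.
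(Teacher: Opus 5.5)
Your proposal is correct and follows the paper's proof: the same factorization of $F_s(p)^{-1}F_s(p\exp(tv))$ into a conjugated curve times $c_s(p)^{-1}c_s(p\exp(tv))$ yields $\Diff F_s=\Ad(c_s^{-1})\Diff F+\Diff c_s$, which is then differentiated at $s=0$. The only cosmetic difference is that you obtain $\partial_s\Diff c_s|_{s=0}=\tilde v\varphi$ by commuting mixed partials and killing the cross term, whereas the paper differentiates in $s$ first at fixed $t$. You also correctly read the $\Diff F_s$ on the right-hand side of the identity as $\Diff F$, that is, as evaluated at $s=0$.
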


\begin{proof}
Set $c_s(p):=F(p)^{-1}F_s(p)$, so that $c_0\equiv1$ and $\left.\frac{\diff}{\diff s}\right|_{s=0}c_s=\varphi$.
For $v\in\frk g$ we have
\begin{equation}
\begin{aligned}
	F_s(p)^{-1}F_s(p\exp(tv))
	&= \Bigl( c_s(p)^{-1}\bigl(F(p)^{-1}F(p\exp(tv))\bigr)c_s(p) \Bigr)\\
	&\qquad\cdot \Bigl( c_s(p)^{-1}c_s(p\exp(tv)) \Bigr) ,
\end{aligned}
\end{equation}
which is a product of two curves through the identity of $H$ whose derivatives at $t=0$ therefore add:
\begin{equation}\label{hh:eq:DFs2}
	\Diff F_s(p)[v] = \Ad\bigl(c_s(p)^{-1}\bigr)\Diff F(p)[v] + \Diff c_s(p)[v] .
\end{equation}
We differentiate~\eqref{hh:eq:DFs2} at $s=0$.
On the one hand, $\left.\frac{\diff}{\diff s}\right|_{s=0}\Ad(c_s^{-1})=-\ad_\varphi$.
On the other hand, since $c_0\equiv1$, for every fixed $t$ the derivative at $s=0$ of $s\mapsto c_s(p)^{-1}c_s(p\exp(tv))$ is $\varphi(p\exp(tv))-\varphi(p)$, hence
$\left.\frac{\diff}{\diff s}\right|_{s=0}\Diff c_s(p)[v] = \tilde v\varphi(p)$.
We have obtained~\eqref{hh:eq:dalpha}.

Each $F_s$ is a contact map, so $\Diff F_s[v]\in\Ee$ for every $v\in V(G)$ and every $s$, and~\eqref{hh:eq:admissible} follows by differentiating at $s=0$.
Finally, all the $F_s$ agree outside a fixed compact subset of $\Omega$, so we may differentiate~\eqref{hh:eq:energy} under the integral sign, and~\eqref{hh:eq:firstvar} follows from~\eqref{hh:eq:dalpha}.
\end{proof}

The following identity contains all the results of this section.

\begin{proposition}[Key identity]\label{hh:prop:key}
Let $G$ and $H$ be sub-Riemannian Lie groups, $\Omega\subset G$ open, $X_1,\dots,X_m$ an orthonormal basis of $V(G)$, and $F:\Omega\to H$ a $C^2$-smooth conformal submersion of factor $\lambda$, with $A_1,\dots,A_m$ as in~\eqref{hh:eq:Ai}.
Let $\Qq$ be a complement of $\Ee$ in $\frk h$ and let $\chi$ be the modular mismatch of the splitting $\frk h=\Ee\oplus\Qq$.
Then, for every $\varphi\in C^\infty(\Omega;\frk h)$,
\begin{equation}\label{hh:eq:key}
\begin{aligned}
	\sum_{i=1}^m\bigl\langle A_i,\pi_{\Ee}\bigl(\tilde X_i\varphi+[A_i,\varphi]\bigr)\bigr\rangle_H
	&=\div_{\vol_G}\Bigl(\sum_{i=1}^m\langle A_i,\pi_{\Ee}\varphi\rangle_H\,\tilde X_i\Bigr)\\
	&\qquad-\bigl\langle \Tt(F),\pi_{\Ee}\varphi\bigr\rangle_H
	+\lambda^2\,\chi(\varphi) ,
\end{aligned}
\end{equation}
where $\Tt(F)$ is the harmonic morphism operator~\eqref{hh:eq:T}.
\end{proposition}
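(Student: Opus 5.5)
The plan is to verify the identity pointwise, splitting $\varphi$ into its horizontal and vertical parts. Set $\psi:=\pi_{\Ee}\varphi$, which lies in $C^\infty(\Omega;\Ee)$. Since $\pi_{\Ee}$ is a constant linear map on $\frk h$, it commutes with differentiation along $\tilde X_i$, so $\pi_{\Ee}(\tilde X_i\varphi)=\tilde X_i\psi$. The left-hand side of~\eqref{hh:eq:key} therefore splits into a derivative term and a bracket term:
\begin{equation}
	\sum_{i=1}^m\langle A_i,\tilde X_i\psi\rangle_H
	+\sum_{i=1}^m\langle A_i,\pi_{\Ee}[A_i,\varphi]\rangle_H .
\end{equation}
I would treat the two sums separately. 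The derivative term will produce the divergence and $\Tt(F)$. The bracket term will produce the correction to $\mH$ that turns it into $\chi$.

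For the derivative term, I expand the divergence using~\eqref{eq670d3315} and Lemma~\ref{hh:lem:modular}\ref{hh:it:mod2}, which gives $\div_{\vol_G}\tilde X_i=-\kappa_G(X_i)$. The scalar product on $V(H)$ is constant, so the Leibniz rule applies. This yields
\begin{equation}
	\div_{\vol_G}\Bigl(\sum_i\langle A_i,\psi\rangle_H\tilde X_i\Bigr)
	=\sum_i\langle\tilde X_iA_i,\psi\rangle_H+\sum_i\langle A_i,\tilde X_i\psi\rangle_H-\sum_i\kappa_G(X_i)\langle A_i,\psi\rangle_H .
\end{equation}
I then use three facts. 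First, $\sum_i\tilde X_iA_i=\tr_G(\Diff^2F)$ by~\eqref{eq6a7ee065}; this is valued in $\Ee$ because $F$ is contact. Second, $\sum_i\kappa_G(X_i)A_i=\Diff F[\mG]$ by~\eqref{eq6a7ebdf0} and linearity of $\Diff F(p)$. Third, the definition~\eqref{hh:eq:T} of $\Tt(F)$. Together they give
\begin{equation}
	\sum_i\langle A_i,\tilde X_i\psi\rangle_H=\div_{\vol_G}(\cdots)-\langle\Tt(F),\psi\rangle_H+\lambda^2\langle\mH,\psi\rangle_H .
\end{equation}
Finally, $\langle\mH,\psi\rangle_H=\kappa_H(\pi_{\Ee}\varphi)$ by~\eqref{hh:eq:mG}, since $\psi\in\Ee$.

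For the bracket term, I write $\pi_{\Ee}[A_i,\varphi]=-\pi_{\Ee}\ad_\varphi A_i=PA_i$ with $P:=-\pi_{\Ee}\circ\ad_\varphi|_{\Ee}:\Ee\to\Ee$. At each $p$, $L=\Diff F(p)|_{V(G)}$ is a homothetic projection of factor $\lambda(p)$ onto $V(H)$. Hence~\eqref{hh:eq:tighttrace} of Lemma~\ref{hh:lem:tight} gives
\begin{equation}
	\sum_i\langle PA_i,A_i\rangle_H=\lambda^2\tr(P)=-\lambda^2\kae(\varphi).
\end{equation}
This also covers the degenerate case $\lambda(p)=0$, where all $A_i(p)=0$. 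Adding the two pieces produces the factor
\begin{equation}
	\lambda^2\bigl(\kappa_H(\pi_{\Ee}\varphi)-\kae(\varphi)\bigr)=\lambda^2\chi(\varphi)
\end{equation}
by~\eqref{hh:eq:chi}, which is exactly~\eqref{hh:eq:key}.

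There is no conceptual obstacle; the main risk is bookkeeping. I would check three things with care. The sign of $[A_i,\varphi]$ against $\ad_\varphi$ must come out as $-\kae$ and not $+\kae$. The divergence sign from Lemma~\ref{hh:lem:modular}\ref{hh:it:mod2} must match the $-\Diff F[\mG]$ term in $\Tt(F)$. The trace in~\eqref{hh:eq:tighttrace} must be taken over $W=\Ee$, so that it is $\kae$ and not $\kappa_H$. This last point is precisely where the mismatch $\chi$ arises.
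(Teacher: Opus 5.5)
Your proposal is correct and follows essentially the same route as the paper. The bracket term is handled by \eqref{hh:eq:tighttrace}, applied to the endomorphism $\pi_{\Ee}\circ\ad_\varphi|_{\Ee}$ (up to sign), which yields $-\lambda^2\kae(\varphi)$. The derivative term is handled by the Leibniz rule, \eqref{eq6a7ee065}, \eqref{eq670d3315} and $\div_{\vol_G}\tilde X_i=-\kappa_G(X_i)$, which yield the divergence, $-\langle\Tt(F),\pi_{\Ee}\varphi\rangle_H$ and $\lambda^2\kappa_H(\pi_{\Ee}\varphi)$. The only differences are cosmetic: you expand the divergence forward instead of rewriting $\sum_i\tilde X_if_i$, and your $P$ has the opposite sign to the paper's $P_\varphi$.
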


\begin{proof}
We compute the two summands of the left-hand side of~\eqref{hh:eq:key}.

For the second one, observe that $P_\varphi:=\pi_{\Ee}\circ\ad_\varphi|_{\Ee}$ is an endomorphism of $\Ee$, that $\pi_{\Ee}[A_i,\varphi]=-P_\varphi A_i$, and that $A_i\in\Ee$; hence, by~\eqref{hh:eq:tighttrace},
\begin{equation}
	\sum_{i=1}^m\bigl\langle A_i,\pi_{\Ee}[A_i,\varphi]\bigr\rangle_H
	=-\sum_{i=1}^m\bigl\langle A_i,P_\varphi A_i\bigr\rangle_H
	=-\lambda^2\,\tr(P_\varphi)=-\lambda^2\,\kae(\varphi) .
\end{equation}

For the first summand, since $\pi_{\Ee}$ is a fixed linear map, the Leibniz rule and~\eqref{eq6a7ee065} give
\begin{equation}
	\sum_{i=1}^m\bigl\langle A_i,\pi_{\Ee}(\tilde X_i\varphi)\bigr\rangle_H
	=\sum_{i=1}^m\tilde X_i\langle A_i,\pi_{\Ee}\varphi\rangle_H
	-\bigl\langle \tr_G(\Diff^2F),\pi_{\Ee}\varphi\bigr\rangle_H .
\end{equation}
 By Lemma~\ref{hh:lem:modular}\ref{hh:it:mod2}, by~\eqref{eq6a7ebdf0} and by~\eqref{eq670d3315}, writing $f_i:=\langle A_i,\pi_{\Ee}\varphi\rangle_H$,
\begin{equation}
\begin{aligned}
	 \sum_{i=1}^m\tilde X_if_i
	&=\div_{\vol_G}\Bigl(\sum_{i=1}^mf_i\tilde X_i\Bigr)+\sum_{i=1}^m\kappa_G(X_i)f_i \\
	&=\div_{\vol_G}\Bigl(\sum_{i=1}^mf_i\tilde X_i\Bigr)+\bigl\langle \Diff F[\mG],\pi_{\Ee}\varphi\bigr\rangle_H ,
\end{aligned}
\end{equation}
where in the last equality we used the linearity of $\Diff F$. 

Adding the two summands, and using $\langle\mH,\pi_{\Ee}\varphi\rangle_H=\kappa_H(\pi_{\Ee}\varphi)$, which holds by~\eqref{hh:eq:mG} on $H$ because $\pi_{\Ee}\varphi\in\Ee$, we get that the left-hand side of~\eqref{hh:eq:key} equals
\begin{align*}
	&\div_{\vol_G}\Bigl(\sum_{i=1}^mf_i\tilde X_i\Bigr)
	-\bigl\langle \tr_G(\Diff^2F)-\Diff F[\mG],\pi_{\Ee}\varphi\bigr\rangle_H
	-\lambda^2\kae(\varphi)\\
	&\qquad=\div_{\vol_G}\Bigl(\sum_{i=1}^mf_i\tilde X_i\Bigr)
	-\bigl\langle \Tt(F),\pi_{\Ee}\varphi\bigr\rangle_H
	+\lambda^2\kappa_H(\pi_{\Ee}\varphi)-\lambda^2\kae(\varphi) ,
\end{align*}
which is~\eqref{hh:eq:key} by~\eqref{hh:eq:chi}.
\end{proof}

\begin{theorem}\label{hh:thm:firstvariation}
Let $G$ and $H$ be sub-Riemannian Lie groups, $\Omega\subset G$ open, and $F:\Omega\to H$ a $C^2$-smooth conformal submersion of factor $\lambda$.
Let $\Qq$ be a complement of $\Ee$ in $\frk h$ and let $\chi$ be the modular mismatch of the splitting $\frk h=\Ee\oplus\Qq$.
Then, for every variation $(F_s)_{s\in(-\epsilon,\epsilon)}$ of $F$ with variation field $\varphi$,
\begin{equation}\label{hh:eq:firstvariation}
	\left.\frac{\diff}{\diff s}\right|_{s=0}\Energy(F_s)
	=\int_\Omega\Bigl(\lambda^2\,\chi(\varphi)-\bigl\langle \Tt(F),\pi_{\Ee}\varphi\bigr\rangle_H\Bigr)\did\vol_G .
\end{equation}
In particular, the right-hand side of~\eqref{hh:eq:firstvariation} does not depend on the choice of $\Qq$, although both its summands do.
\end{theorem}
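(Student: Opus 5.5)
The argument combines Lemma~\ref{hh:lem:firstvar} with the key identity of Proposition~\ref{hh:prop:key}, and then integrates.

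First, by~\eqref{hh:eq:firstvar} the first variation equals $\int_\Omega\sum_i\langle A_i,\tilde X_i\varphi+[A_i,\varphi]\rangle_H\did\vol_G$. We may insert $\pi_{\Ee}$ in the second slot, because of two facts. Each $A_i\in\Ee$, and the pairing $\langle\cdot,\cdot\rangle_H$ is a scalar product on $\Ee$ only. More precisely, the quantity $\tilde X_i\varphi+[A_i,\varphi]$ lies in $\Ee$ by~\eqref{hh:eq:admissible}, since every $F_s$ is contact; note that at $s=0$ we have $\Diff F_s[X_i]=A_i$. Hence $\pi_{\Ee}$ acts as the identity on it. So the integrand is exactly the left-hand side of~\eqref{hh:eq:key}.

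Second, we apply Proposition~\ref{hh:prop:key}, which is valid for the $C^2$ conformal submersion $F$ and the smooth field $\varphi$. The integrand becomes
\[
\div_{\vol_G}\Bigl(\sum_i f_i\tilde X_i\Bigr)-\langle\Tt(F),\pi_{\Ee}\varphi\rangle_H+\lambda^2\chi(\varphi),
\qquad f_i=\langle A_i,\pi_{\Ee}\varphi\rangle_H .
\]

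Third, we integrate. The vector field $\sum_if_i\tilde X_i$ has compact support in $\Omega$, because $\varphi\in C^\infty_c(\Omega;\frk h)$. The compact support of $\varphi$ holds since $F_s=F$ off a compact set $K$, so $c_s\equiv1$ there. Testing~\eqref{eq6904c1b2} against a cutoff $\phi\equiv1$ on the support therefore gives zero integral for the divergence term. One technical point is that the $f_i$ are only $C^1$ when $F$ is $C^2$, while~\eqref{eq6904c1b2} is stated for smooth fields. It extends to compactly supported $C^1$ fields by approximation, or directly by~\eqref{eq670d3315} together with~\eqref{eq6a7f1a2b}: each $\int\tilde X_if_i=\kappa_G(X_i)\int f_i$, which exactly cancels the term $-\kappa_G(X_i)\int f_i$. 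This yields~\eqref{hh:eq:firstvariation}.

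Finally, the left-hand side of~\eqref{hh:eq:firstvariation} does not involve $\Qq$, so neither does the right-hand side. The two summands separately depend on $\Qq$ through $\chi$ and $\pi_{\Ee}$. The main care needed is in the first step, namely justifying the replacement by $\pi_{\Ee}$ via~\eqref{hh:eq:admissible}; the rest is bookkeeping.
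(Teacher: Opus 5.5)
Your proposal is correct and follows the paper's proof essentially verbatim: you insert $\pi_{\Ee}$ using~\eqref{hh:eq:admissible}, apply the key identity~\eqref{hh:eq:key}, and discard the divergence term because $\sum_i f_i\tilde X_i$ has compact support. Your remark that~\eqref{eq6904c1b2} and~\eqref{eq6a7f1a2b} must be extended to compactly supported $C^1$ fields when $F$ is only $C^2$ is a valid refinement that the paper leaves implicit.
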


\begin{proof}
By~\eqref{hh:eq:admissible} we have $\tilde X_i\varphi+[A_i,\varphi]\in\Ee$, and $A_i\in\Ee$, so that
$\langle A_i,\tilde X_i\varphi+[A_i,\varphi]\rangle_H
=\langle A_i,\pi_{\Ee}(\tilde X_i\varphi+[A_i,\varphi])\rangle_H$.
Hence~\eqref{hh:eq:firstvariation} follows by integrating~\eqref{hh:eq:key} over $\Omega$ and comparing with~\eqref{hh:eq:firstvar}:
indeed, the vector field $\sum_if_i\tilde X_i$ has compact support, because so does $\varphi$, and thus the integral of its divergence vanishes, by~\eqref{eq6904c1b2}.
Finally, the left-hand side of~\eqref{hh:eq:firstvariation} does not involve $\Qq$.
\end{proof}

Since harmonic morphisms are the conformal submersions with $\Tt(F)=0$, we obtain Theorem~\ref{thm6a7ee2b1}:

\begin{corollary}[Theorem~\ref{thm6a7ee2b1}]\label{hh:cor:firstvariation}
Let $G$ and $H$ be sub-Riemannian Lie groups, $\Omega\subset G$ open, and $F:\Omega\to H$ a harmonic morphism, with conformal factor $\lambda$ as in Theorem~\ref{thm6a7ae38c}.
Let $\Qq$ be a complement of $\Ee$ in $\frk h$ and let $\chi$ be the modular mismatch of the splitting $\frk h=\Ee\oplus\Qq$.
Then, for every variation $(F_s)_{s\in (\epsilon,\epsilon)}$
 of $F$ with variation field $\varphi$,
\begin{equation}\label{hh:eq:firstvariationHM}
	\left.\frac{\diff}{\diff s}\right|_{s=0}\Energy(F_s)
	=\int_\Omega \lambda^2\,\chi(\varphi)\did\vol_G .
\end{equation}
Consequently, $F$ is a harmonic map if and only if $\int_\Omega\lambda^2\chi(\varphi)\did\vol_G=0$ for every variation field $\varphi$ of $F$.
\end{corollary}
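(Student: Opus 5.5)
This corollary follows directly from Theorem~\ref{hh:thm:firstvariation} combined with the characterization of harmonic morphisms in Theorem~\ref{thm6a7ae38c}.

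First, since $F$ is a harmonic morphism between sub-Riemannian Lie groups, Theorem~\ref{thm6a7ae38c} (implication \ref{thm6a7ae38c_item1}$\THEN$\ref{thm6a7ae38c_item3}) gives the following facts. The map $F$ is $C^\infty$-smooth, in particular $C^2$. It is a conformal submersion of factor $\lambda$, hence a contact map. And it satisfies $\Tt(F)\equiv0$. By~\eqref{eq6a7ee0a1}, the factor $\lambda$ is uniquely determined by $F$, so it coincides with the conformal factor mentioned in the statement. Consequently $F$ meets every hypothesis of Theorem~\ref{hh:thm:firstvariation}, and that theorem may be applied to any contact variation $(F_s)_s$ of $F$ with variation field $\varphi$.

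Second, we substitute $\Tt(F)=0$ into~\eqref{hh:eq:firstvariation}. The term $\langle\Tt(F),\pi_{\Ee}\varphi\rangle_H$ then vanishes identically on $\Omega$, and what remains is exactly~\eqref{hh:eq:firstvariationHM}. The integral is finite because $\varphi$ has compact support in $\Omega$ and $\lambda$ is continuous.

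Third, for the ``consequently'' part we recall the definition in Section~\ref{subs6a7ee0c8}: $F$ is a harmonic map exactly when $\left.\frac{\diff}{\diff s}\right|_{s=0}\Energy(F_s)=0$ for every contact variation. By~\eqref{hh:eq:firstvariationHM}, this derivative equals $\int_\Omega\lambda^2\chi(\varphi)\did\vol_G$, so $F$ is harmonic if and only if this integral vanishes for every variation field $\varphi$ arising from a contact variation. Here ``variation field of $F$'' should be read as a field $\varphi$ coming from an actual contact variation, which is the reading the statement intends.

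The argument has no real obstacle. The only point needing care is that Theorem~\ref{hh:thm:firstvariation} requires the given variation to consist of contact maps, as the definition of variation prescribes. That assumption is what places the integrand of~\eqref{hh:eq:firstvar} in $\Ee$ via~\eqref{hh:eq:admissible}, and it is already built into the hypotheses.
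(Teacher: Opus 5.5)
Your proposal is correct and follows the paper's own proof. You use Theorem~\ref{thm6a7ae38c} to conclude that $F$ is a smooth conformal submersion of factor $\lambda$ with $\Tt(F)=0$, and then substitute this into Theorem~\ref{hh:thm:firstvariation}, with the ``consequently'' part read off from the definition of harmonic map.
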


\begin{proof}
By Theorem~\ref{thm6a7ae38c}, $F$ is a smooth conformal submersion of factor $\lambda$, and $\Tt(F)=0$.
Apply Theorem~\ref{hh:thm:firstvariation}.
\end{proof}

\subsection{When harmonic morphisms are harmonic maps}\label{hh:subs:positive}

\begin{theorem}\label{hh:thm:balanced}
Let $G$ and $H$ be sub-Riemannian Lie groups, $\Omega\subset G$ open, and $F:\Omega\to H$ a harmonic morphism.
If the polarized Lie algebra $(\frk h,\Ee)$ admits a modularly balanced complement, then $F$ is a harmonic map.
\end{theorem}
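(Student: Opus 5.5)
The plan is to derive the statement directly from Corollary~\ref{hh:cor:firstvariation}, which already contains all the analytic work. Since $(\frk h,\Ee)$ admits a modularly balanced complement, I fix such a complement $\Qq$, so that the modular mismatch $\chi$ of the splitting $\frk h=\Ee\oplus\Qq$ vanishes identically as an element of $\frk h^*$.

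Next, I take an arbitrary contact variation $(F_s)_s$ of $F$ with variation field $\varphi$. Two facts come from earlier results:
\begin{enumerate}[label=(\arabic*)]
\item By Theorem~\ref{thm6a7ae38c}, $F$ is a $C^\infty$-smooth conformal submersion of some factor $\lambda$ with $\Tt(F)=0$.
\item By \eqref{hh:eq:firstvariationHM}, this gives
\[
\left.\frac{\diff}{\diff s}\right|_{s=0}\Energy(F_s)=\int_\Omega\lambda^2\,\chi(\varphi)\did\vol_G .
\]
\end{enumerate}
Since $\chi=0$, the integrand is identically zero. The first variation therefore vanishes for every contact variation, which is exactly the definition of a harmonic map in Section~\ref{subs6a7ee0c8}.

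The only point that needs care is that Corollary~\ref{hh:cor:firstvariation} holds for \emph{any} complement $\Qq$. The freedom to choose $\Qq$ is guaranteed by Theorem~\ref{hh:thm:firstvariation}, whose left-hand side does not depend on $\Qq$, so we may use the balanced one.

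Beyond this there is no real obstacle. The substantive work was done in the key identity, Proposition~\ref{hh:prop:key}, together with the fact that the divergence term integrates to zero because $\varphi$ has compact support.
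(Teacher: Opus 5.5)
Your proposal is correct and coincides with the paper's proof, which is the one-line observation that Corollary~\ref{hh:cor:firstvariation}, applied with a modularly balanced complement $\Qq$ (so $\chi=0$), makes the first variation of $\Energy$ vanish along every contact variation. Your appeal to the $\Qq$-independence in Theorem~\ref{hh:thm:firstvariation} is harmless but unnecessary, since the corollary is already stated for an arbitrary complement.
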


\begin{proof}
Immediate from Corollary~\ref{hh:cor:firstvariation} with $\chi=0$.
\end{proof}

\begin{corollary}\label{hh:cor:examples}
Let $G$ and $H$ be sub-Riemannian Lie groups, $\Omega\subset G$ open, and $F:\Omega\to H$ a harmonic morphism.
If $H$ is Riemannian, or if $H$ is a Carnot group, or, more generally, if the polarized Lie algebra $(\frk h,\Ee)$ is as in Proposition~\ref{hh:prop:examples}, then $F$ is a harmonic map.
\end{corollary}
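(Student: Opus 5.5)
The plan is to reduce Corollary~\ref{hh:cor:examples} directly to Theorem~\ref{hh:thm:balanced}. Since $F$ is a harmonic morphism, Theorem~\ref{hh:thm:balanced} says it is a harmonic map as soon as $(\frk h,\Ee)$ admits a modularly balanced complement $\Qq$. It therefore suffices to check, in each of the listed cases, that such a complement exists. This is exactly the content of Proposition~\ref{hh:prop:examples}, so the corollary follows by combining the two results.

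Concretely, I would proceed case by case.
\begin{itemize}
\item If $H$ is Riemannian, then $\Ee=\frk h$. Case~\ref{hh:it:riem} of Proposition~\ref{hh:prop:examples} gives $\Qq=\{0\}$ with $\chi=0$.
\item If $H$ is a Carnot group, the polarization is the first layer $V_1$ of a stratification. Case~\ref{hh:it:carnot} provides the balanced complement $\Qq=V_2\oplus\dots\oplus V_s$.
\item In the remaining situations, namely cases~\ref{hh:it:ideal} and~\ref{hh:it:unimod}, the hypothesis itself supplies $\Qq$, and the proposition shows $\chi=0$.
\end{itemize}
In every case we then apply Theorem~\ref{hh:thm:balanced}. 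Equivalently, Corollary~\ref{hh:cor:firstvariation} gives $\frac{\diff}{\diff s}|_{s=0}\Energy(F_s)=\int_\Omega\lambda^2\chi(\varphi)\did\vol_G=0$ for every contact variation, so $F$ is a harmonic map.

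There is essentially no obstacle here. The only point deserving a check is the Carnot case, specifically that the standard meaning of ``Carnot group'' matches case~\ref{hh:it:carnot}. A Carnot group's polarization is by definition the first stratum $V_1$, and the complement $\Qq=V_2\oplus\dots\oplus V_s$ is an ideal. Moreover, $\ad_B$ for $B\in V_1$ is nilpotent on $\Qq$, so it is traceless. This is already verified in the proof of Proposition~\ref{hh:prop:examples}, so the corollary is immediate.
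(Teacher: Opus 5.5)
Your proposal is correct and follows the paper's own route. The corollary is obtained by combining Proposition~\ref{hh:prop:examples}, which supplies a modularly balanced complement in each listed case, with Theorem~\ref{hh:thm:balanced}, which is Corollary~\ref{hh:cor:firstvariation} with $\chi=0$.
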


\section{A harmonic morphism that is not a harmonic map}\label{sec6a8ec490}

\subsection{A harmonic morphism that is not a harmonic map}\label{hh:subs:counterexample}

\begin{definition}\label{hh:def:example}
Let $\frk h$ be the $4$-dimensional real Lie algebra with basis $Y_1,Y_2,Y_3,Z$ and brackets
\begin{equation}\label{hh:eq:brackets}
	[Y_1,Y_2]=Z,
	\qquad
	[Y_3,Y_1]=Y_1,
	\qquad
	[Y_3,Y_2]=Y_2,
	\qquad
	[Y_3,Z]=2Z ,
\end{equation}
all other brackets of basis vectors being zero.
Let $H$ be the corresponding connected and simply connected Lie group, polarized by
\begin{equation}
	\Ee:=\langle Y_1,Y_2,Y_3\rangle ,
\end{equation}
with $Y_1,Y_2,Y_3$ declared orthonormal, and endowed with a left Haar measure $\vol_H$.
\end{definition}

\begin{lemma}\label{hh:lem:example}
The algebra $\frk h$ of Definition~\ref{hh:def:example} is solvable and isomorphic to $\R Y_3\ltimes\frk h_1$, where $\frk h_1=\langle Y_1,Y_2,Z\rangle$ is the $3$-dimensional Heisenberg algebra and $\ad_{Y_3}|_{\frk h_1}$ is the derivation $\operatorname{diag}(1,1,2)$;
equivalently, $\frk h$ is the Iwasawa algebra $\frk a\oplus\frk n$ of $\frk{su}(2,1)$, that is, the Lie algebra of the group of the complex hyperbolic plane $\C H^2$.
The subspace $\Ee$ is a bracket-generating hyperplane, $H$ is not unimodular, and, for the complement $\Qq:=\R Z$,
\begin{equation}\label{hh:eq:examplechi}
	\kappa_H=4Y_3^* ,
	\qquad
	\mH=4Y_3 ,
	\qquad
	\kae=\kaq=2Y_3^* ,
	\qquad
	\chi=2Y_3^*\neq0 ,
\end{equation}
where $Y_3^*$ is the dual basis vector of $Y_3$.
\end{lemma}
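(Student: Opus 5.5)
The proof is a direct computation from the brackets \eqref{hh:eq:brackets}, done in the following order.

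\emph{Structure.} The span $\frk h_1=\langle Y_1,Y_2,Z\rangle$ is closed under brackets, and its only nonzero bracket is $[Y_1,Y_2]=Z$, so it is the Heisenberg algebra. It is an ideal, because $\ad_{Y_3}$ maps it into itself. Hence $\frk h=\R Y_3\ltimes\frk h_1$, where $\ad_{Y_3}|_{\frk h_1}=\operatorname{diag}(1,1,2)$ in the basis $Y_1,Y_2,Z$. This is a derivation: $[Y_3,[Y_1,Y_2]]=2Z=[Y_1,Y_2]+[Y_1,Y_2]$, so the Jacobi identity holds. Solvability follows because $[\frk h,\frk h]\subseteq\frk h_1$ and $\frk h_1$ is nilpotent. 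For the identification with the Iwasawa algebra $\frk a\oplus\frk n$ of $\frk{su}(2,1)$, I will cite the standard description: $\frk n$ is the Heisenberg algebra, and $\frk a=\R H_0$ acts with weights $\alpha$ on the root space $\frk g_\alpha=\frk n_1$ and $2\alpha$ on $\frk g_{2\alpha}=\frk n_2$. After normalizing $\alpha(H_0)=1$, this is exactly $\operatorname{diag}(1,1,2)$. The identification is not needed for the later numerical claims.

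\emph{Bracket generation and non-unimodularity.} The hyperplane $\Ee$ contains $Y_1,Y_2$, and $[Y_1,Y_2]=Z$, so $\Ee+[\Ee,\Ee]=\frk h$. Next I compute $\kappa_H$ on the basis. The operator $\ad_{Y_3}$ is diagonal with eigenvalues $(1,1,0,2)$ on $(Y_1,Y_2,Y_3,Z)$, so $\tr\ad_{Y_3}=4$. The operators $\ad_{Y_1}$, $\ad_{Y_2}$ and $\ad_Z$ are nilpotent: for example, $\ad_{Y_1}$ sends $Y_2\mapsto Z$, $Y_3\mapsto -Y_1$, $Y_1\mapsto 0$, $Z\mapsto 0$, which is strictly triangular for a suitable ordering of the basis. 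Their traces therefore vanish. Thus $\kappa_H=4Y_3^*\neq0$, and $H$ is not unimodular. Since $Y_1,Y_2,Y_3$ are orthonormal, \eqref{eq6a7ebdf0} gives $\mH=\sum_a\kappa_H(Y_a)Y_a=4Y_3$.

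\emph{Split characters.} Take $\Qq=\R Z$, and compute $\kaq(B)$ as the $Z$-coefficient of $[B,Z]$. Since $Z$ is central in $\frk h_1$ and $[Y_3,Z]=2Z$, this gives $\kaq=2Y_3^*$. Then $\kae=\kappa_H-\kaq=2Y_3^*$, which can also be checked directly: $\pi_{\Ee}\ad_{Y_3}|_{\Ee}=\operatorname{diag}(1,1,0)$, and for $B=Y_1,Y_2,Z$ the map $\pi_{\Ee}\ad_B|_{\Ee}$ is nilpotent or zero. Finally, by \eqref{hh:eq:chi}, $\chi=\kaq\circ\pi_{\Ee}-\kae\circ\pi_{\Qq}$. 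The second term vanishes, because $\kae(Z)=0$. The first term is $2Y_3^*\circ\pi_{\Ee}=2Y_3^*$, because $Y_3^*$ kills $Z$. Hence $\chi=2Y_3^*\neq0$.

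None of these steps is a real obstacle. The only place to be careful is the sign and index bookkeeping: $\kaq$ must be evaluated on $\pi_{\Ee}$, not on the full vector, and the nilpotency of the $\ad_{Y_i}$ must be confirmed so that no trace is hidden.
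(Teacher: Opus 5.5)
Your proposal is correct and follows the paper's proof essentially step for step. You check that $\operatorname{diag}(1,1,2)$ is a derivation of the Heisenberg algebra, read off $\kappa_H=4Y_3^*$ from $\ad_{Y_3}$ and the traceless $\ad_{Y_1},\ad_{Y_2},\ad_Z$, and compute $\kaq$, $\kae$ and $\chi$ directly for $\Qq=\R Z$; your explicit arguments for solvability and for the Iwasawa identification (via the restricted roots $\alpha,2\alpha$ of $\frk{su}(2,1)$) add justification for two points that the paper states without proof.
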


\begin{proof}
The endomorphism $\delta:=\operatorname{diag}(1,1,2)$ of $\frk h_1$ is a derivation, because
$\delta[Y_1,Y_2]=2Z=[\delta Y_1,Y_2]+[Y_1,\delta Y_2]$;
hence the semidirect product $\R Y_3\ltimes_\delta\frk h_1$ is a Lie algebra with the brackets~\eqref{hh:eq:brackets}.
The subspace $\Ee$ is bracket generating because $[Y_1,Y_2]=Z$.
In the basis $(Y_1,Y_2,Y_3,Z)$ the matrices of $\ad_{Y_1},\ad_{Y_2},\ad_Z$ have zero diagonal, while $\ad_{Y_3}=\operatorname{diag}(1,1,0,2)$; therefore $\kappa_H=4Y_3^*$ and, by~\eqref{hh:eq:mG} on $H$, $\mH=4Y_3$.
With $\Qq=\R Z$ we have $\pi_{\Ee}\ad_{Y_3}|_{\Ee}=\operatorname{diag}(1,1,0)$, and $\pi_{\Ee}\ad_B|_{\Ee}$ has zero diagonal for $B\in\{Y_1,Y_2\}$, while $\ad_ZY_1=\ad_ZY_2=0$ and $\pi_{\Ee}\ad_ZY_3=\pi_{\Ee}(-2Z)=0$; hence $\kae=2Y_3^*$.
Similarly $\kaq(B)$ is the coefficient of $Z$ in $[B,Z]$, so $\kaq=2Y_3^*$.
Finally $\chi=\kaq\circ\pi_{\Ee}-\kae\circ\pi_{\Qq}=2Y_3^*$, because $\kae(Z)=0$.
\end{proof}

\begin{theorem}\label{hh:thm:counterexample}
Let $H$ be the sub-Riemannian Lie group of Definition~\ref{hh:def:example}, let $\Omega\subset H$ be a bounded open set and let $F:=\Id|_\Omega:\Omega\to H$.
Then:
\begin{enumerate}[label=(\roman*)]
\item\label{hh:it:cex1}
	$F$ is a harmonic morphism, it is a conformal submersion of factor $\lambda\equiv1$, and $\Tt(F)=0$;
\item\label{hh:it:cex2}
	$F$ is \emph{not} a harmonic map: there are variations of $F$ along which the horizontal energy strictly decreases.
\end{enumerate}
\end{theorem}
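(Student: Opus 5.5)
The plan is to verify part \ref{hh:it:cex1} directly from Theorem~\ref{thm6a7ae38c}, and then to build an explicit contact variation of the identity whose variation field is a compactly supported multiple of $Y_3$. Corollary~\ref{hh:cor:firstvariation} then turns the first variation into $\int_\Omega 2f\did\vol_H$, with the sign of $f$ ours to choose.

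\textbf{Part \ref{hh:it:cex1}.} We take $G=H$ with the same polarization, scalar product and Haar measure, and $F=\Id|_\Omega$. From \eqref{DFdef1} we get $\Diff F(p)[v]=v$ for every $p$. Hence $F$ is a contact map and $\Diff F(p)|_{\Ee}=\Id_{\Ee}$, which is a homothetic projection of factor $\lambda\equiv1$. Since $\Diff F$ is constant, $\Diff^2F=0$ by \eqref{eq67684517}. Therefore
\[
\Tt(F)=0-\mH+1^2\cdot\mH=0,
\]
using $\mG=\mH$. Theorem~\ref{thm6a7ae38c} (or the last statement of Corollary~\ref{cor6a7ec445}) shows that $F$ is a harmonic morphism. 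This is also obvious directly, since $\laplacian_H(v\circ\Id)=\laplacian_Hv$.

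\textbf{Part \ref{hh:it:cex2}: the infinitesimal symmetry.} The key structural observation is that $[Y_3,\Ee]\subseteq\Ee$ by \eqref{hh:eq:brackets}. Fix $f\in C^\infty_c(\Omega)$ and put $\tilde W:=f\,\tilde Y_3$. For any horizontal field $\tilde X=\sum_j g_j\tilde Y_j$ (with $j=1,2,3$) we compute
\[
[\tilde W,\tilde X]=f[\tilde Y_3,\tilde X]-(\tilde Xf)\tilde Y_3 .
\]
Both terms are horizontal: the first because $[\tilde Y_3,\tilde Y_j]$ is horizontal for $j=1,2,3$, and the second because $\tilde Y_3$ is horizontal. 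Thus $\mathcal L_{\tilde W}\Gamma(\tilde V(H))\subseteq\Gamma(\tilde V(H))$. The flow $\Phi_s$ of $\tilde W$ is complete, since $\tilde W$ has compact support. By the standard fact that a vector field whose Lie derivative preserves a distribution has flow preserving it, each $\Phi_s$ is a contact diffeomorphism of $H$, equal to the identity outside $\operatorname{supp}f$.

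\textbf{Part \ref{hh:it:cex2}: the variation and its sign.} Set $F_s:=\Phi_s|_\Omega$. Its variation field is
\[
\varphi(p)=\left.\frac{\diff}{\diff s}\right|_{s=0}p^{-1}\Phi_s(p)=\diff L_{p^{-1}}[\tilde W(p)]=f(p)\,Y_3 .
\]
Since $\Omega$ is bounded, the energy is finite, although the derivative is well defined regardless. Corollary~\ref{hh:cor:firstvariation}, with $\lambda\equiv1$ and $\chi=2Y_3^*$ from Lemma~\ref{hh:lem:example}, gives
\[
\left.\frac{\diff}{\diff s}\right|_{s=0}\Energy(F_s)=\int_\Omega 2f\did\vol_H .
\]
Choosing $f\le0$ with $f\not\equiv0$ makes this strictly negative. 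So the energy strictly decreases for small $s>0$, and $F$ is not a harmonic map.

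The main point to be careful about is the claim that the flow of $f\tilde Y_3$ consists of contact maps in the sense of Section~\ref{subs6a7ee050}, that is, that $\Diff\Phi_s(p)[\Ee]\subseteq\Ee$. We would justify this by the ODE argument: for a horizontal frame $\tilde Y_j$, the pushforward $(\Phi_{-s})_*\tilde Y_j$ satisfies a linear ODE in $s$ whose right-hand side is a horizontal combination. This is exactly what the bracket computation above provides, and it is where $[Y_3,\Ee]\subseteq\Ee$ is used.
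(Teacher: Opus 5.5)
Your proposal is correct. Part (i) coincides with the paper's argument; part (ii) takes a different route.

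\textbf{The paper's route.} The paper uses the explicit variation $F_s(p)=p\exp(s\varphi_3(p)Y_3)$. It gets contactness from the closed formula $\Diff F_s[v]=\Ad(\exp(-\psi Y_3))v+(\tilde v\psi)Y_3$. It then writes the energy density $2e^{-2\psi}+(1+\tilde Y_3\psi)^2+(\tilde Y_1\psi)^2+(\tilde Y_2\psi)^2$ for every $s$ and differentiates it by hand, using only \eqref{eq6a7f1a2b} with $\kappa_H(Y_3)=4$.

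\textbf{Your route.} You flow along $f\tilde Y_3$. You derive contactness from the same two algebraic facts, $[Y_3,\Ee]\subseteq\Ee$ and $Y_3\in\Ee$, through the infinitesimal-symmetry argument. You then read off the first variation from Corollary~\ref{hh:cor:firstvariation} with $\chi=2Y_3^*$. Each step holds:
\begin{enumerate}
\item The ODE closes because $[f\tilde Y_3,\tilde Y_j]=\sum_k c_{jk}\tilde Y_k$ with smooth $c_{jk}$. So $(\Phi_{-s})_*\tilde Y_j(p)$ solves a linear system in these very vectors and stays in the span of the $\tilde Y_k(p)$. Your phrase ``a horizontal combination'' should be sharpened to this.
\item The flow is the identity off $\operatorname{supp} f$.
\item Its variation field is $fY_3$.
\end{enumerate}

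\textbf{What each route buys.} Your route is shorter and isolates the geometric reason the example works: for any $B\in\Ee$ normalizing $\Ee$, the fields $f\tilde B$ are infinitesimal contact symmetries. However, it leans on Proposition~\ref{hh:prop:key} and Theorem~\ref{hh:thm:firstvariation}. The paper's computation is elementary and independent of that machinery. It therefore also serves as a consistency check of Corollary~\ref{hh:cor:firstvariation}, which is the point of Remark~\ref{hh:rem:cex}(a), and it gives $\Energy(F_s)$ for all $s$, not only its derivative at $s=0$.

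You could also bypass the key identity entirely. Since the first variation depends only on $\varphi$, Lemma~\ref{hh:lem:firstvar} with $A_i=Y_i$ and $\varphi=fY_3$ gives the integrand $\tilde Y_3f-2f$ directly. By \eqref{eq6a7f1a2b} its integral is $2\int_\Omega f\did\vol_H$.
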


\begin{proof}
\ref{hh:it:cex1}
The identity is trivially a harmonic morphism.
Moreover $\Diff F(p)=\Id_{\frk h}$ for every $p$, so $\Diff F(p)|_{\Ee}$ is a homothetic projection of factor $1$, and $\Diff^2F=0$;
since the source and the target coincide, $\Tt(F)=0-\mH+\mH=0$, in accordance with Corollary~\ref{cor6a7ec445}.

\ref{hh:it:cex2}
Fix $\varphi_3\in C^\infty_c(\Omega;\R)$ with $\varphi_3\geq0$ and $\varphi_3\not\equiv0$, and define
\begin{equation}\label{hh:eq:variation}
	F_s(p):=p\exp\bigl(s\,\varphi_3(p)Y_3\bigr),
	\qquad s\in\R .
\end{equation}
Each $F_s$ is smooth and agrees with $F$ outside the support of $\varphi_3$.
Write $\psi:=s\varphi_3$ and $c(p):=\exp(\psi(p)Y_3)$.
For $v\in\frk h$ we have
\begin{equation}
	F_s(p)^{-1}F_s(p\exp(tv))
	=\bigl(c(p)^{-1}\exp(tv)c(p)\bigr)\cdot\bigl(c(p)^{-1}c(p\exp(tv))\bigr) ,
\end{equation}
a product of two curves through the identity, whose derivatives at $t=0$ therefore add.
Since $c(p)^{-1}c(p\exp(tv))=\exp\bigl((\psi(p\exp(tv))-\psi(p))Y_3\bigr)$, both factors lying in the one-parameter subgroup generated by $Y_3$, we get
\begin{equation}\label{hh:eq:DFs}
	\Diff F_s(p)[v]=\Ad\bigl(\exp(-\psi(p)Y_3)\bigr)v+(\tilde v\psi)(p)\,Y_3 .
\end{equation}
The map $\Ad(\exp(-\psi Y_3))=e^{-\psi\ad_{Y_3}}$ acts as $e^{-\psi}$ on $\langle Y_1,Y_2\rangle$, as the identity on $Y_3$ and as $e^{-2\psi}$ on $Z$; in particular it preserves $\Ee$.
As also $Y_3\in\Ee$, formula~\eqref{hh:eq:DFs} shows that \emph{$F_s$ is a contact map for every $s$}, so that $(F_s)_s$ is a variation of $F$ in the sense of Section~\ref{subs6a7ee0c8}.
Explicitly,
\begin{equation}
	\Diff F_s[Y_j]=e^{-\psi}Y_j+(\tilde Y_j\psi)Y_3 \quad (j=1,2),
	\qquad
	\Diff F_s[Y_3]=(1+\tilde Y_3\psi)Y_3 ,
\end{equation}
so that the energy density in~\eqref{hh:eq:energy} is
\begin{equation}
	\sum_{i=1}^3\bigl|\Diff F_s[Y_i]\bigr|^2_H
	=2e^{-2\psi}+(1+\tilde Y_3\psi)^2+(\tilde Y_1\psi)^2+(\tilde Y_2\psi)^2 .
\end{equation}
We differentiate at $s=0$, where $\psi=0$ and $\frac{\de\psi}{\de s}=\varphi_3$; the two squares of first derivatives do not contribute, and we obtain
\begin{equation}
	\left.\frac{\diff}{\diff s}\right|_{s=0}\Energy(F_s)
	=\frac12\int_\Omega\bigl(-4\varphi_3+2\,\tilde Y_3\varphi_3\bigr)\did\vol_H
	=\int_\Omega\bigl(\tilde Y_3\varphi_3-2\varphi_3\bigr)\did\vol_H .
\end{equation}
By~\eqref{eq6a7f1a2b} and Lemma~\ref{hh:lem:example},
$\int_\Omega\tilde Y_3\varphi_3\did\vol_H=\kappa_H(Y_3)\int_\Omega\varphi_3\did\vol_H$, and $\kappa_H(Y_3)=4$.
Therefore
\begin{equation}\label{hh:eq:cexvariation}
	\left.\frac{\diff}{\diff s}\right|_{s=0}\Energy(F_s)
	=2\int_\Omega\varphi_3\did\vol_H>0 ,
\end{equation}
in accordance with Corollary~\ref{hh:cor:firstvariation} and $\chi=2Y_3^*$.
Hence $\Energy(F_s)<\Energy(F)$ for $s<0$ small, and $F$ is not a harmonic map.
\end{proof}

\begin{remark}\label{hh:rem:cex}
Some comments on Theorem~\ref{hh:thm:counterexample}.
\begin{enumerate}[label=(\alph*)]
\item
	The variation~\eqref{hh:eq:variation} is explicit and elementary, and the proof is independent of the theory of~\cite{zbMATH07887823}:
	in particular it does not use the compactness and simple connectedness assumptions, nor the dichotomy between regular and singular points.
	It is a statement about the horizontal energy alone.
	This is relevant, because Theorem~\ref{hh:thm:counterexample} rules out the abnormal equation of~\cite[Theorem 4.2(a)]{zbMATH07887823} as well:
	a map that is not a critical point of the energy is not a harmonic map, whichever equation one writes.
\item
	By Corollary~\ref{hh:cor:examples}, no such example exists with a Riemannian, or Carnot, or unimodular target with $\kae=0$.
	It is the interplay between the non-unimodularity of $H$ and the polarization $V(H)$ that produces the phenomenon, and not the non-unimodularity alone.
\item
	The map $F=\Id$ is an isometry preserving the measure;
	the example shows in particular that isometries of sub-Riemannian Lie groups need not be critical points of the horizontal energy.
\end{enumerate}
\end{remark}

\section{The normal equation for harmonic morphisms}\label{sec6a8ec4e4}

\subsection{Comparison with the Grong--Markina equations}\label{hh:subs:GM}

The critical points of the horizontal energy among contact maps are studied in~\cite[Theorem 4.2]{zbMATH07887823} by means of Lagrange multipliers for the constraint~\eqref{hh:eq:admissible}.
In this section we recast the resulting \emph{normal equation} in our notation and we compare it with the key identity~\eqref{hh:eq:key}.

Let $G$ and $H$ be sub-Riemannian Lie groups, $\Omega\subset G$ open, $X_1,\dots,X_m$ an orthonormal basis of $V(G)$, and $F:\Omega\to H$ a smooth contact map with $A_1,\dots,A_m$ as in~\eqref{hh:eq:Ai}.
For $\varphi\in C^\infty(\Omega;\frk h)$ we set
\begin{equation}\label{hh:eq:Lalpha}
	L_F\varphi:\Omega\to\frk h\otimes V(G)^* ,
	\qquad
	(L_F\varphi)[v]:=\tilde v\varphi+[\Diff F[v],\varphi] ,
	\quad\forall v\in V(G),
\end{equation}
which is the derivative~\eqref{hh:eq:dalpha} of $\Diff F$ along a variation with field $\varphi$,
and, for $\eta:\Omega\to\frk h^*\otimes V(G)^*$, writing $\eta_i:=\eta[X_i]$,
\begin{equation}\label{hh:eq:Lstarexplicit}
	(L_F^*\eta)(B)
	:=-\sum_{i=1}^m\tilde X_i\bigl(\eta_i(B)\bigr)
	+\sum_{i=1}^m\kappa_G(X_i)\,\eta_i(B)
	+\sum_{i=1}^m\eta_i\bigl([A_i,B]\bigr) ,
	\qquad \forall B\in\frk h .
\end{equation}


\begin{lemma}\label{hh:lem:adjoint}
The operator $L^*_F$ is the formal adjoint of $L_F$:
for every $\eta:\Omega\to\frk h^*\otimes V(G)^*$ of class $C^1$ and every $\varphi\in C^\infty_c(\Omega;\frk h)$,
\begin{equation}\label{hh:eq:adjointness}
	\int_\Omega\sum_{i=1}^m\eta_i\bigl((L_F\varphi)[X_i]\bigr)\did\vol_G
	=\int_\Omega(L^*_F\eta)(\varphi)\did\vol_G .
\end{equation}
\end{lemma}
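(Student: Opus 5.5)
We expand the left-hand side of \eqref{hh:eq:adjointness} using the definition \eqref{hh:eq:Lalpha}: for each $i$,
\[
	\eta_i\bigl((L_F\varphi)[X_i]\bigr)=\eta_i\bigl(\tilde X_i\varphi\bigr)+\eta_i\bigl([A_i,\varphi]\bigr),
\]
where $A_i=\Diff F[X_i]$ as in \eqref{hh:eq:Ai}. The second term already matches the last sum in \eqref{hh:eq:Lstarexplicit} evaluated at $B=\varphi$, pointwise. So only the first term needs work, and it is handled by an integration by parts along the left-invariant fields $\tilde X_i$.

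For the first term, fix a basis $B_1,\dots,B_N$ of $\frk h$ and write $\varphi=\sum_k\varphi^kB_k$ with $\varphi^k\in C^\infty_c(\Omega)$. By linearity of $\eta_i(p)$ we have $\eta_i(\tilde X_i\varphi)=\sum_k\eta_i(B_k)\,\tilde X_i\varphi^k$. The Leibniz rule for the vector field $\tilde X_i$ then gives
\[
	\eta_i(B_k)\,\tilde X_i\varphi^k=\tilde X_i\bigl(\eta_i(B_k)\varphi^k\bigr)-\tilde X_i\bigl(\eta_i(B_k)\bigr)\varphi^k .
\]
The function $g:=\eta_i(B_k)\varphi^k$ is $C^1$ with compact support in $\Omega$. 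We then need the identity
\[
	\int_\Omega\tilde X_i g\did\vol_G=\kappa_G(X_i)\int_\Omega g\did\vol_G .
\]
This is \eqref{eq6a7f1a2b} of Lemma~\ref{hh:lem:modular}\ref{hh:it:mod2}, stated there for $C^\infty_c$ functions. It extends to $C^1_c$ functions, either by mollification or directly from $\div_{\vol_G}\tilde X_i=-\kappa_G(X_i)$ and the divergence theorem, since $\div(g\tilde X_i)=\tilde X_ig-\kappa_G(X_i)g$ has zero integral. Summing over $k$ and $i$ yields
\[
	\int_\Omega\sum_i\eta_i(\tilde X_i\varphi)\did\vol_G=\int_\Omega\sum_i\Bigl(-\tilde X_i\bigl(\eta_i(\varphi)\bigr)\big|_{\varphi\text{ frozen}}+\kappa_G(X_i)\eta_i(\varphi)\Bigr)\did\vol_G .
\]
Here ``frozen'' means $\sum_k\tilde X_i(\eta_i(B_k))\varphi^k$. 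This is exactly the first two sums of \eqref{hh:eq:Lstarexplicit} at $B=\varphi(p)$, with the convention, implicit in \eqref{hh:eq:Lstarexplicit}, that $L_F^*\eta$ is a pointwise linear form on $\frk h$ evaluated at constant $B$ and then at $B=\varphi(p)$.

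Adding the two contributions gives \eqref{hh:eq:adjointness}. The only point needing care is this interpretation of $(L_F^*\eta)(\varphi)$: the derivative $\tilde X_i$ in \eqref{hh:eq:Lstarexplicit} acts on $\eta_i(B)$ for fixed $B$, not on $\varphi$. Beyond that, the regularity extension of \eqref{eq6a7f1a2b} to $C^1_c$ is the only minor obstacle.
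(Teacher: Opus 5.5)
Your proof is correct and is essentially the paper's argument: the bracket term $\eta_i([A_i,\varphi])$ matches pointwise, and $\eta_i(\tilde X_i\varphi)$ is integrated by parts using the Leibniz rule and \eqref{eq6a7f1a2b}. Your ``frozen'' reading is what the paper writes as $(\tilde X_i\eta_i)(\varphi)$, and your remark on extending \eqref{eq6a7f1a2b} to $C^1_c$ functions (extended by zero from $\Omega$ to $G$) fills in a detail the paper leaves implicit.
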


\begin{proof}
By the Leibniz rule and~\eqref{eq6a7f1a2b},
\begin{equation}
\begin{aligned}
	\int_\Omega\eta_i(\tilde X_i\varphi)\did\vol_G
	&=\int_\Omega\Bigl(\tilde X_i\bigl(\eta_i(\varphi)\bigr)-(\tilde X_i\eta_i)(\varphi)\Bigr)\did\vol_G\\
	&=\int_\Omega\Bigl(\kappa_G(X_i)\,\eta_i-\tilde X_i\eta_i\Bigr)(\varphi)\did\vol_G .
\end{aligned}
\end{equation}
Summing over $i$ and adding $\sum_i\eta_i([A_i,\varphi])$ gives~\eqref{hh:eq:adjointness}.
\end{proof}

\begin{definition}\label{hh:def:normal}
A smooth contact map $F:\Omega\to H$ is a \emph{normal harmonic map} if there exists $\eta:\Omega\to\frk h^*\otimes V(G)^*$ of class $C^1$, called a \emph{multiplier}, such that
\begin{equation}\label{hh:eq:GMnormal}
	\eta_i(B)=\langle A_i,B\rangle_H
	\quad\forall B\in\Ee,\ i=1,\dots,m,
	\qquad\text{and}\qquad
	L_F^*\eta=0 .
\end{equation}
\end{definition}

\begin{lemma}\label{hh:lem:normalimplieshm}
A normal harmonic map is a harmonic map.
\end{lemma}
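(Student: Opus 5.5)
The plan is to show that, for every contact variation $(F_s)_s$ of $F$ with variation field $\varphi$, the first variation $\left.\frac{\diff}{\diff s}\right|_{s=0}\Energy(F_s)$ equals $\int_\Omega(L_F^*\eta)(\varphi)\did\vol_G$. Since $L_F^*\eta=0$, this integral vanishes, and $F$ is then a harmonic map by the definition in Section~\ref{subs6a7ee0c8}.

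The first step starts from~\eqref{hh:eq:firstvar} of Lemma~\ref{hh:lem:firstvar}:
\begin{equation}
	\left.\frac{\diff}{\diff s}\right|_{s=0}\Energy(F_s)
	=\int_\Omega\sum_{i=1}^m\bigl\langle A_i,(L_F\varphi)[X_i]\bigr\rangle_H\did\vol_G ,
\end{equation}
where $(L_F\varphi)[X_i]=\tilde X_i\varphi+[A_i,\varphi]$ is exactly the operator~\eqref{hh:eq:Lalpha} evaluated at $v=X_i$.

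The second step uses the admissibility condition~\eqref{hh:eq:admissible}, which holds because every $F_s$ is contact. It gives $(L_F\varphi)[X_i]\in\Ee$ pointwise. The first condition in~\eqref{hh:eq:GMnormal} then lets us replace $\langle A_i,(L_F\varphi)[X_i]\rangle_H$ by $\eta_i\bigl((L_F\varphi)[X_i]\bigr)$. This is the only place where the constraint enters, and it is the reason $\eta$ only needs to agree with $\langle A_i,\cdot\rangle_H$ on $\Ee$ and may be arbitrary on $\Qq$.

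The third step is to note that $\varphi\in C^\infty_c(\Omega;\frk h)$, because the $F_s$ agree with $F$ outside a fixed compact set. The adjointness identity~\eqref{hh:eq:adjointness} of Lemma~\ref{hh:lem:adjoint} therefore applies and converts the integral into $\int_\Omega(L_F^*\eta)(\varphi)\did\vol_G$, which is $0$ by the second condition in~\eqref{hh:eq:GMnormal}.

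There is no real obstacle. The only points to check are that $\eta$ is $C^1$, so that Lemma~\ref{hh:lem:adjoint} applies, and that the variation field has compact support, both of which hold by hypothesis. The definition of $\left.\frac{\diff}{\diff s}\right|_{s=0}\Energy(F_s)$ when $\Energy(F)=+\infty$ uses the integral over a fixed compact set, which is consistent with the integrals above.
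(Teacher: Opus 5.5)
Your proposal is correct and matches the paper's proof essentially step for step. Like the paper, you start from the first-variation formula \eqref{hh:eq:firstvar}, use the admissibility condition \eqref{hh:eq:admissible} to replace $\langle A_i,(L_F\varphi)[X_i]\rangle_H$ by $\eta_i\bigl((L_F\varphi)[X_i]\bigr)$, and then apply the adjointness identity \eqref{hh:eq:adjointness} together with $L_F^*\eta=0$.
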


\begin{proof}
Let $\eta$ be a multiplier as in~\eqref{hh:eq:GMnormal} and let $\varphi$ be the variation field of a variation of $F$.
By~\eqref{hh:eq:admissible} we have $(L_F\varphi)[X_i]\in\Ee$, hence
$\langle A_i,(L_F\varphi)[X_i]\rangle_H=\eta_i\bigl((L_F\varphi)[X_i]\bigr)$.
By~\eqref{hh:eq:firstvar} and~\eqref{hh:eq:adjointness},
\begin{equation}
	\left.\frac{\diff}{\diff s}\right|_{s=0}\Energy(F_s)
	=\int_\Omega\sum_{i=1}^m\eta_i\bigl((L_F\varphi)[X_i]\bigr)\did\vol_G
	=\int_\Omega(L^*_F\eta)(\varphi)\did\vol_G=0 . 
\end{equation}
\end{proof}

Given a complement $\Qq$ of $\Ee$ in $\frk h$, we write
\begin{equation}\label{hh:eq:flat}
	A^\flat:=\langle A,\pi_{\Ee}(\,\cdot\,)\rangle_H\in\frk h^* ,
	\qquad A\in\Ee ,
\end{equation}
so that $A^\flat|_{\Ee}=\langle A,\cdot\rangle_H$ and $A^\flat|_{\Qq}=0$.
If $\Ann(\Ee)$ 
denotes the space of linear forms on $\frk h$ that vanish on $\Ee$,
the first condition in~\eqref{hh:eq:GMnormal} holds if and only if
\begin{equation}\label{hh:eq:multipliers}
	\eta_i=A_i^\flat+\nu_i
	\qquad\text{with}\qquad
	\nu_i:\Omega\to\Ann(\Ee),\quad i=1,\dots,m ,
\end{equation}
and the $\nu_i$ are uniquely determined by $\eta$.

\begin{corollary}\label{hh:cor:key}
Let $G$ and $H$ be sub-Riemannian Lie groups, $\Omega\subset G$ open, $F:\Omega\to H$ a smooth conformal submersion of factor $\lambda$, let $\Qq$ be a complement of $\Ee$ in $\frk h$ with modular mismatch $\chi$, and let $\eta$ be as in~\eqref{hh:eq:multipliers}.
Then, for every $B\in\frk h$,
\begin{equation}\label{hh:eq:keyGM}
	(L^*_F\eta)(B)
	=-\bigl\langle \Tt(F),\pi_{\Ee}B\bigr\rangle_H
	+\lambda^2\,\chi(B)
	+(L^*_F\nu)(B) .
\end{equation}
\end{corollary}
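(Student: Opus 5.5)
By linearity of $L^*_F$ in $\eta$ (visible from~\eqref{hh:eq:Lstarexplicit}, where every term is linear in $\eta_i$), we have $L^*_F\eta=L^*_F(A^\flat)+L^*_F\nu$, writing $A^\flat$ for the tuple $(A_1^\flat,\dots,A_m^\flat)$. So it suffices to prove the pointwise identity
\begin{equation}
	(L^*_F A^\flat)(B)=-\bigl\langle \Tt(F),\pi_{\Ee}B\bigr\rangle_H+\lambda^2\chi(B),
	\qquad\forall B\in\frk h .
\end{equation}
We fix a constant vector $B\in\frk h$ and evaluate the three sums in~\eqref{hh:eq:Lstarexplicit} with $\eta_i=A_i^\flat$, so that $\eta_i(B)=\langle A_i,\pi_{\Ee}B\rangle_H$.

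For the first sum, $\pi_{\Ee}B$ is constant, so $\tilde X_i\langle A_i,\pi_{\Ee}B\rangle_H=\langle \tilde X_iA_i,\pi_{\Ee}B\rangle_H$. By~\eqref{eq6a7ee065}, minus the first sum equals $-\langle\tr_G(\Diff^2F),\pi_{\Ee}B\rangle_H$. For the second sum, linearity of $\Diff F$ together with~\eqref{eq6a7ebdf0} gives $\sum_i\kappa_G(X_i)\langle A_i,\pi_{\Ee}B\rangle_H=\langle \Diff F[\mG],\pi_{\Ee}B\rangle_H$.

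For the third sum we have $\sum_i\langle A_i,\pi_{\Ee}[A_i,B]\rangle_H=-\sum_i\langle A_i,P_BA_i\rangle_H$ with $P_B:=\pi_{\Ee}\ad_B|_{\Ee}$. Since $F$ is a conformal submersion of factor $\lambda$, Lemma~\ref{hh:lem:tight}, formula~\eqref{hh:eq:tighttrace}, turns this into $-\lambda^2\tr(P_B)=-\lambda^2\kae(B)$. This is exactly the computation done in the proof of Proposition~\ref{hh:prop:key}, with the function $\varphi$ replaced by the constant $B$.

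Summing the three pieces gives $-\langle\tr_G(\Diff^2F)-\Diff F[\mG],\pi_{\Ee}B\rangle_H-\lambda^2\kae(B)$. We then add and subtract $\lambda^2\langle\mH,\pi_{\Ee}B\rangle_H=\lambda^2\kappa_H(\pi_{\Ee}B)$, using~\eqref{hh:eq:mG} on $H$ since $\pi_{\Ee}B\in\Ee$. This produces $-\langle\Tt(F),\pi_{\Ee}B\rangle_H+\lambda^2(\kappa_H\circ\pi_{\Ee}-\kae)(B)$, and the last bracket is $\chi(B)$ by~\eqref{hh:eq:chi}. No step is a real obstacle; the only points needing care are the sign conventions in $L^*_F$ (the minus sign in front of $\tilde X_i$ and the plus sign on the $\kappa_G$ term) and the fact that $B$ is constant, so no divergence term appears, unlike in~\eqref{hh:eq:key}.
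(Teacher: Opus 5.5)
Your proof is correct and is essentially the paper's argument. The paper also reduces to $\nu=0$ by linearity, but then applies the key identity~\eqref{hh:eq:key} to the constant field $\varphi\equiv B$ and rewrites its divergence term as $-(L^*_F\eta)(B)+\sum_iA_i^\flat([A_i,B])$ using Lemma~\ref{hh:lem:modular}\ref{hh:it:mod2}; you instead expand the three sums of~\eqref{hh:eq:Lstarexplicit} directly, which repeats the same computation without the detour through the divergence.
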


\begin{proof}
By the linearity of $L^*_F$, it is enough to prove~\eqref{hh:eq:keyGM} for $\nu=0$, that is, for $\eta_i=A_i^\flat$.
Apply the key identity~\eqref{hh:eq:key} to the constant function $\varphi\equiv B$, and observe that
$\langle A_i,\pi_{\Ee}\varphi\rangle_H=A_i^\flat(B)$ and $\tilde X_i\varphi=0$, so that the left-hand side of~\eqref{hh:eq:key} is $\sum_iA_i^\flat([A_i,B])$, while
\begin{equation}
\begin{aligned}
	\div_{\vol_G}\Bigl(\sum_{i=1}^mA_i^\flat(B)\tilde X_i\Bigr)
	&=\sum_{i=1}^m\Bigl(\tilde X_i\bigl(A_i^\flat(B)\bigr)-\kappa_G(X_i)A_i^\flat(B)\Bigr)\\
	&=-(L^*_F\eta)(B)+\sum_{i=1}^mA_i^\flat\bigl([A_i,B]\bigr) ,
\end{aligned}
\end{equation}
by Lemma~\ref{hh:lem:modular}\ref{hh:it:mod2} and~\eqref{hh:eq:Lstarexplicit}.
\end{proof}

\begin{theorem}\label{hh:thm:balancednormal}
Let $G$ and $H$ be sub-Riemannian Lie groups, $\Omega\subset G$ open, and $F:\Omega\to H$ a smooth conformal submersion.
Assume that the polarized Lie algebra $(\frk h,\Ee)$ admits a modularly balanced complement $\Qq$.
Then the following statements are equivalent:
\begin{enumerate}[label=(\roman*)]
\item\label{hh:it:bal1} $F$ is a harmonic morphism;
\item\label{hh:it:bal2} $\Tt(F)=0$, that is, $F$ satisfies~\eqref{eq6a7c3003};
\item\label{hh:it:bal3} $F$ is a normal harmonic map, with the multiplier $\eta$ given by $\eta_i=A_i^\flat$.
\end{enumerate}
\end{theorem}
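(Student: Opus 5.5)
The equivalence of \ref{hh:it:bal1} and \ref{hh:it:bal2} needs nothing new. A smooth conformal submersion is in particular $C^2$, so Corollary~\ref{cor6a7ec445} applies and says that $F$ is a harmonic morphism if and only if $\Tt(F)\equiv0$. This step does not use the balanced complement. The plan is therefore to prove \ref{hh:it:bal2}$\Leftrightarrow$\ref{hh:it:bal3}, and the tool for this is Corollary~\ref{hh:cor:key}.

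We fix the modularly balanced complement $\Qq$, so that $\chi=0$. We use this same $\Qq$ to define $A_i^\flat$ in~\eqref{hh:eq:flat}, and we take the multiplier $\eta_i:=A_i^\flat$. This corresponds to $\nu_i\equiv0$ in~\eqref{hh:eq:multipliers}. Since $F$ is smooth, $\eta$ is smooth, so it is of class $C^1$. By~\eqref{hh:eq:multipliers}, $\eta$ automatically satisfies the first condition of~\eqref{hh:eq:GMnormal}. Corollary~\ref{hh:cor:key} with $\nu=0$ and $\chi=0$ then gives, for every $B\in\frk h$ and pointwise on $\Omega$,
\[
	(L^*_F\eta)(B)=-\bigl\langle \Tt(F),\pi_{\Ee}B\bigr\rangle_H .
\]

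For \ref{hh:it:bal2}$\Rightarrow$\ref{hh:it:bal3}: if $\Tt(F)=0$, the right-hand side vanishes for every $B$. Hence $L_F^*\eta=0$, and $F$ is a normal harmonic map with the multiplier $\eta_i=A_i^\flat$.

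For \ref{hh:it:bal3}$\Rightarrow$\ref{hh:it:bal2}: if $L_F^*\eta=0$ with this $\eta$, then $\langle\Tt(F)(p),\pi_{\Ee}B\rangle_H=0$ for every $B\in\frk h$ and every $p\in\Omega$. Since $\pi_{\Ee}$ maps $\frk h$ onto $\Ee$, and $\Tt(F)(p)\in\Ee$ by~\eqref{hh:eq:T}, we can take $B=\Tt(F)(p)$ and conclude that $\Tt(F)(p)=0$.

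The only delicate point is bookkeeping. The flat map $A^\flat$ in~\eqref{hh:eq:flat}, and hence the multiplier in \ref{hh:it:bal3}, depends on the chosen complement. So \ref{hh:it:bal3} must be read with $\Qq$ equal to the balanced complement. That is exactly the choice under which the $\chi$ term in~\eqref{hh:eq:keyGM} drops out, so there is no real obstacle beyond keeping this choice consistent.
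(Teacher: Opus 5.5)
Your proposal is correct and follows the paper's proof essentially step for step: (i)$\Leftrightarrow$(ii) by Corollary~\ref{cor6a7ec445}, and (ii)$\Leftrightarrow$(iii) by Corollary~\ref{hh:cor:key} with $\nu=0$ and $\chi=0$. For the converse you use, as the paper does, that $\pi_{\Ee}$ is onto $\Ee$ and $\Tt(F)\in\Ee$. Your remark that $A_i^\flat$ must be built from the balanced complement $\Qq$ is the right reading of (iii), and the paper makes the same choice implicitly.
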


\begin{proof}
\ref{hh:it:bal1}$\iff$\ref{hh:it:bal2} is Corollary~\ref{cor6a7ec445}.
For $\eta_i=A_i^\flat$, Corollary~\ref{hh:cor:key} with $\nu=0$ and $\chi=0$ gives
$(L^*_F\eta)(B)=-\langle\Tt(F),\pi_{\Ee}B\rangle_H$ for every $B\in\frk h$.
If $\Tt(F)=0$, this vanishes and $\eta$ is a multiplier, because $A_i^\flat|_{\Ee}=\langle A_i,\cdot\rangle_H$ by~\eqref{hh:eq:flat};
conversely, if it vanishes for every $B\in\frk h$, then $\Tt(F)=0$, because $\pi_{\Ee}$ is onto $\Ee$ and $\Tt(F)\in\Ee$.
\end{proof}

The next result shows that the obstruction to the existence of a multiplier is pointwise and purely Lie-algebraic.
Given a complement $\Qq$ of $\Ee$ in $\frk h$, we denote by
$\scr B(A,B):=\pi_{\Qq}[A,B]$, for $A,B\in\Ee$, the Levi form of $(\frk h,\Ee)$.

\begin{lemma}\label{hh:lem:psi}
Let $G$ and $H$ be sub-Riemannian Lie groups, $\Omega\subset G$ open, $F:\Omega\to H$ a smooth conformal submersion of factor $\lambda$, and $\Qq$ a complement of $\Ee$ in $\frk h$.
Let $p\in\Omega$ with $\lambda(p)\neq0$, and let $\nu_1,\dots,\nu_m\in\Ann(\Ee)$.
The following statements are equivalent:
\begin{enumerate}[label=(\roman*)]
\item\label{hh:it:psi1}
	$\displaystyle\sum_{i=1}^m\nu_i\bigl(\scr B(A_i(p),B)\bigr)=-\lambda(p)^2\,\kaq(B)$ for every $B\in\Ee$;
\item\label{hh:it:psi2}
	the linear map $\Psi:\Qq\to\Ee$, $\Psi(Z):=\lambda(p)^{-2}\sum_i\nu_i(Z)A_i(p)$, satisfies
	\begin{equation}\label{hh:eq:ALG}
		\tr\Bigl(\Ee\ni A\longmapsto\Psi\bigl(\scr B(A,B)\bigr)\Bigr)=-\kaq(B)
		\qquad\forall B\in\Ee .
	\end{equation}
\end{enumerate}
Moreover, for $\Psi\in\Hom(\Qq,\Ee)$ the subspace $\Qq_\Psi:=\{Z-\Psi Z:Z\in\Qq\}$ is a complement of $\Ee$ in $\frk h$ with
\begin{equation}\label{hh:eq:kappaPsi}
	\kappa_{\Qq_\Psi}(B)=\kaq(B)+\tr\Bigl(A\mapsto\Psi\bigl(\scr B(A,B)\bigr)\Bigr),
	\qquad B\in\Ee ,
\end{equation}
so that~\eqref{hh:eq:ALG} holds if and only if $\kappa_{\Qq_\Psi}|_{\Ee}=0$.
\end{lemma}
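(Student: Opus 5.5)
The lemma has two parts. The equivalence of (i) and (ii) is a direct rewriting using the tight-frame identity of Lemma~\ref{hh:lem:tight}. The formula~\eqref{hh:eq:kappaPsi} comes from computing $\pi_{\Ee'}\ad_B$ for the new splitting.

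\textbf{Equivalence of (i) and (ii).} Fix $B\in\Ee$. Since $\nu_i\in\Ann(\Ee)$, we have $\nu_i(\scr B(A_i,B))=\nu_i(\pi_{\Qq}[A_i,B])$. Define the endomorphism $P_B:\Ee\to\Ee$ by $P_B(A):=\Psi(\scr B(A,B))$. By~\eqref{hh:eq:tighttrace} applied to $L=\Diff F(p)|_{V(G)}$, a homothetic projection of factor $\lambda(p)$,
\[
\lambda(p)^2\tr(P_B)=\sum_i\langle P_BA_i,A_i\rangle_H=\lambda(p)^{-2}\sum_{i,j}\nu_j\bigl(\scr B(A_i,B)\bigr)\langle A_j,A_i\rangle_H .
\]
I then need $\sum_i\langle A_j,A_i\rangle_H A_i=\lambda^2A_j$, which is~\eqref{hh:eq:tight} with $w=A_j$. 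Using it, the right-hand side collapses to $\sum_j\nu_j(\scr B(A_j,B))$. So (i) reads $\lambda^2\tr(P_B)=-\lambda^2\kaq(B)$, and dividing by $\lambda(p)^2\neq0$ gives (ii).

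\textbf{The formula~\eqref{hh:eq:kappaPsi}.} First, $\Qq_\Psi$ is a complement of $\Ee$: it is the graph of $-\Psi$ over $\Qq$, and $\Ee\oplus\Qq=\frk h$. The projection onto $\Qq_\Psi$ along $\Ee$ is $\pi'(X)=\pi_{\Qq}X-\Psi\pi_{\Qq}X$, and the linear iso $\iota:\Qq\to\Qq_\Psi$, $Z\mapsto Z-\Psi Z$, has inverse $\pi_{\Qq}|_{\Qq_\Psi}$. Hence
\[
\kappa_{\Qq_\Psi}(B)=\tr\bigl(\pi_{\Qq}\ad_B\iota\bigr)\quad\text{on }\Qq,
\]
and for $Z\in\Qq$ this gives $\pi_{\Qq}[B,Z]-\pi_{\Qq}[B,\Psi Z]$. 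The first term contributes $\kaq(B)$. The second is $Z\mapsto \scr B(\Psi Z,B)$, because $\Psi Z\in\Ee$ and $B\in\Ee$; its trace on $\Qq$ equals the trace of $A\mapsto\Psi(\scr B(A,B))$ on $\Ee$, since $\tr(\scr B(\cdot,B)\circ\Psi)=\tr(\Psi\circ\scr B(\cdot,B))$. This yields~\eqref{hh:eq:kappaPsi}, and the final equivalence with $\kappa_{\Qq_\Psi}|_{\Ee}=0$ is immediate.

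\textbf{Main obstacle.} The delicate step is the sign bookkeeping in $\pi_{\Qq}[B,\Psi Z]=-\scr B(\Psi Z,B)$ against the stated plus sign. I will first check whether $\kappa_{\Qq}$ is meant as $\tr(\pi_{\Qq}\ad_B|_{\Qq})$ with $\ad_B=[B,\cdot]$. If the sign disagrees, I will redo the computation with the equivalent description of the complement as $\{Z+\Psi' Z\}$, which may be how the convention is intended. I will also verify that $\ker\pi'=\Ee$ so that $\pi'$ is indeed the projection.
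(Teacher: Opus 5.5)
Your proof is correct. The ``Moreover'' part follows the paper's argument. You identify $\Qq_\Psi$ with $\Qq$ via $Z\mapsto Z-\Psi Z$, use that $\pi_{\Qq}$ composed with the projection onto $\Qq_\Psi$ along $\Ee$ is again $\pi_{\Qq}$, and conclude with $\tr_{\Qq}(S\Psi)=\tr_{\Ee}(\Psi S)$ for $S=\scr B(\cdot,B)$.

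The sign you flag as the main obstacle is not an issue. $\kaq$ is defined in \eqref{hh:eq:kappas} with $\ad_B=[B,\cdot\,]$, and your own computation already has the right sign. The second term is $-\pi_{\Qq}[B,\Psi Z]=\pi_{\Qq}[\Psi Z,B]=\scr B(\Psi Z,B)$, which contributes $+\tr\bigl(A\mapsto\Psi(\scr B(A,B))\bigr)$. So there is no need to re-parametrize the complement.

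For (i)$\Leftrightarrow$(ii) your route differs from the paper's.
\begin{itemize}
\item The paper notes that $A\mapsto\Psi(\scr B(A,B))=\lambda(p)^{-2}\sum_i\nu_i(\scr B(A,B))\,A_i(p)$ is a sum of rank-one maps $c_i\otimes A_i(p)$ with $c_i=\nu_i(\scr B(\cdot,B))\in\Ee^*$. Its trace is therefore $\lambda(p)^{-2}\sum_i c_i(A_i(p))$, and the equivalence is just multiplication by $\lambda(p)^2$.
\item You instead apply \eqref{hh:eq:tighttrace} and then \eqref{hh:eq:tight} with $w=A_j$. The two factors of $\lambda(p)^2$ produced by the homothetic-projection property cancel.
\end{itemize}
That cancellation shows conformality is not actually needed. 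The trace identity holds for arbitrary vectors $A_i(p)$; the hypotheses matter only through $\lambda(p)\neq0$, which is required to define $\Psi$. Your argument is valid under the stated assumptions, but the paper's is shorter and does not depend on $F$ being a conformal submersion.
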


\begin{proof}
\ref{hh:it:psi2}$\THEN$\ref{hh:it:psi1}:
the map $A\mapsto\Psi(\scr B(A,B))=\lambda^{-2}\sum_i\nu_i(\scr B(A,B))A_i$ is a sum of rank-one maps $c_i\otimes A_i$ with $c_i:=\nu_i(\scr B(\,\cdot\,,B))\in\Ee^*$, whose trace is $c_i(A_i)$; hence its trace is $\lambda^{-2}\sum_i\nu_i(\scr B(A_i,B))$.
The converse implication is the same computation.

For~\eqref{hh:eq:kappaPsi}, identify $\Qq_\Psi$ with $\Qq$ by $Z\mapsto Z-\Psi Z$ and observe that, modulo $\Ee$, the projections onto $\Qq$ and onto $\Qq_\Psi$ agree; therefore, for $B\in\Ee$,
\begin{equation}
	\kappa_{\Qq_\Psi}(B)
	=\tr\bigl(Z\mapsto\pi_{\Qq}[B,Z-\Psi Z]\bigr)
	=\kaq(B)-\tr\bigl(\pi_{\Qq}\circ\ad_B\circ\Psi\bigr) .
\end{equation}
On the other hand, writing $S:\Ee\to\Qq$, $S(A):=\scr B(A,B)$, and using $\tr_{\Ee}(\Psi S)=\tr_{\Qq}(S\Psi)$,
\begin{align}
	-\tr\bigl(\pi_{\Qq}\circ\ad_B\circ\Psi\bigr)
	&= \tr\bigl(Z\mapsto\pi_{\Qq}[\Psi Z,B]\bigr) \\
	&= \tr_{\Qq}(S\Psi) 
	= \tr_{\Ee}(\Psi S) 
	= \tr\Bigl(A\mapsto\Psi\bigl(\scr B(A,B)\bigr)\Bigr) ,
\end{align}
and~\eqref{hh:eq:kappaPsi} follows.
\end{proof}

\begin{theorem}\label{hh:thm:obstruction}
Let $G$ and $H$ be sub-Riemannian Lie groups, $\Omega\subset G$ open, and $F:\Omega\to H$ a harmonic morphism with conformal factor $\lambda$ as in Theorem~\ref{thm6a7ae38c}, with $\lambda(p)\neq0$ for some $p\in\Omega$.
If $F$ is a normal harmonic map, then $\Ee$ admits a complement $\Qq'$ in $\frk h$ such that
\begin{equation}\label{hh:eq:necessary}
	\kappa_{\Qq'}|_{\Ee}=0,
	\qquad\text{that is,}\qquad
	\tr\bigl(\pi_{\Qq'}\ad_B|_{\Qq'}\bigr)=0\quad\forall B\in\Ee .
\end{equation}
Condition~\eqref{hh:eq:necessary} depends only on the polarized Lie algebra $(\frk h,\Ee)$: it involves neither $F$, nor $G$, nor the scalar products.
\end{theorem}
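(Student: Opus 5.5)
The plan is to combine Corollary~\ref{hh:cor:key}, evaluated at the point $p$ where $\lambda(p)\neq0$, with Lemma~\ref{hh:lem:psi}.

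Fix any complement $\Qq$ of $\Ee$, let $\eta$ be a multiplier for $F$, and decompose it as in~\eqref{hh:eq:multipliers}: $\eta_i=A_i^\flat+\nu_i$ with $\nu_i$ valued in $\Ann(\Ee)$. Since $F$ is a harmonic morphism, $\Tt(F)=0$. Then $L_F^*\eta=0$ and Corollary~\ref{hh:cor:key} give, on all of $\Omega$,
\begin{equation}
	(L^*_F\nu)(B)=-\lambda^2\,\chi(B),\qquad\forall B\in\frk h .
\end{equation}
I restrict this to $B\in\Ee$ and expand $(L_F^*\nu)(B)$ using~\eqref{hh:eq:Lstarexplicit}. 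Because $\nu_i$ vanishes on $\Ee$, the terms $\tilde X_i(\nu_i(B))$ and $\kappa_G(X_i)\nu_i(B)$ are zero for constant $B\in\Ee$. What remains is $\sum_i\nu_i([A_i,B])=\sum_i\nu_i(\scr B(A_i,B))$, since $\nu_i$ only sees the $\Qq$-component. On the other side, for $B\in\Ee$ we have $\pi_{\Ee}B=B$ and $\pi_{\Qq}B=0$, so~\eqref{hh:eq:chi} gives $\chi(B)=\kaq(B)$. Hence at $p$,
\begin{equation}
	\sum_{i=1}^m\nu_i(p)\bigl(\scr B(A_i(p),B)\bigr)=-\lambda(p)^2\kaq(B)\qquad\forall B\in\Ee,
\end{equation}
which is statement~\ref{hh:it:psi1} of Lemma~\ref{hh:lem:psi}.

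Since $\lambda(p)\neq0$, Lemma~\ref{hh:lem:psi} applies. It produces $\Psi\in\Hom(\Qq,\Ee)$ satisfying~\eqref{hh:eq:ALG}, and hence, by~\eqref{hh:eq:kappaPsi}, the complement $\Qq':=\Qq_\Psi$ has $\kappa_{\Qq'}|_{\Ee}=0$. This is~\eqref{hh:eq:necessary}. The final sentence of the theorem holds because condition~\eqref{hh:eq:necessary} is stated purely in terms of $\frk h$, $\Ee$ and $\Qq'$.

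The step that needs the most care is the pointwise evaluation. The vanishing of the derivative terms in $L_F^*\nu$ uses that $B$ is a constant element of $\Ee$ and that $\nu_i(B)\equiv0$ identically, not just at $p$. I will also check that the sign convention $\scr B(A,B)=\pi_{\Qq}[A,B]$ matches $[A_i,B]$ in~\eqref{hh:eq:Lstarexplicit}; it does, since both have $A_i$ in the first slot. The regularity of $\eta$ ($C^1$) is enough for~\eqref{hh:eq:Lstarexplicit} to make sense pointwise.
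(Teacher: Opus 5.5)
Your proposal is correct and follows the paper's proof essentially step by step. You evaluate the identity of Corollary~\ref{hh:cor:key} on $B\in\Ee$, kill the derivative terms of $L_F^*\nu$ because $\nu_i(B)\equiv0$ on $\Omega$, use $\Tt(F)=0$ and $\chi|_{\Ee}=\kaq|_{\Ee}$ to reach statement~\ref{hh:it:psi1} of Lemma~\ref{hh:lem:psi} at $p$, and conclude with $\Qq'=\Qq_\Psi$.
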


\begin{proof}
Fix a complement $\Qq$ of $\Ee$ in $\frk h$, let $\eta$ be a multiplier, written as in~\eqref{hh:eq:multipliers}, and evaluate the identity~\eqref{hh:eq:keyGM} at $p$ on a vector $B\in\Ee$.
The functions $\nu_i(B)$ vanish identically, because $\nu_i\in\Ann(\Ee)$; hence the first two summands of $(L^*_F\nu)(B)$ in~\eqref{hh:eq:Lstarexplicit} vanish, and the third one is $\sum_i\nu_i([A_i,B])=\sum_i\nu_i(\scr B(A_i,B))$, again because $\nu_i$ annihilates $\Ee$.
Since $\Tt(F)=0$ and $\chi|_{\Ee}=\kaq|_{\Ee}$, the equation $L^*_F\eta=0$ gives
\begin{equation}
	0=\lambda(p)^2\kaq(B)+\sum_{i=1}^m\nu_i\bigl(\scr B(A_i(p),B)\bigr)
	\qquad\forall B\in\Ee ,
\end{equation}
which is statement~\ref{hh:it:psi1} of Lemma~\ref{hh:lem:psi}.
The lemma then produces $\Psi\in\Hom(\Qq,\Ee)$ with $\kappa_{\Qq_\Psi}|_{\Ee}=0$.
\end{proof}

\begin{corollary}\label{hh:cor:noPsi}
For the polarized Lie algebra $(\frk h,\Ee)$ of Definition~\ref{hh:def:example} there is \emph{no} complement $\Qq'$ of $\Ee$ in $\frk h$ with $\kappa_{\Qq'}|_{\Ee}=0$.
Consequently, the map $F$ of Theorem~\ref{hh:thm:counterexample} is not a normal harmonic map, and no multiplier as in~\eqref{hh:eq:GMnormal} exists for it.
\end{corollary}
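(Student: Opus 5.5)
The plan is to parametrize all complements of $\Ee$ in $\frk h$ explicitly and then evaluate $\kappa_{\Qq'}$ on the single vector $Y_3$. Since $\Ee=\langle Y_1,Y_2,Y_3\rangle$ is a hyperplane, every complement $\Qq'$ is a line not contained in $\Ee$. After rescaling, such a line is spanned by a vector $Z-W$ with $W=aY_1+bY_2+cY_3\in\Ee$ for some $a,b,c\in\R$. This is exactly the family $\Qq_\Psi$ of Lemma~\ref{hh:lem:psi}, with $\Qq=\R Z$ and $\Psi(Z)=W$.

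For a one-dimensional complement, $\kappa_{\Qq'}(B)=\tr\bigl(\pi_{\Qq'}\ad_B|_{\Qq'}\bigr)$ is just the $\Qq'$-coordinate of $[B,Z-W]$ in the splitting $\frk h=\Ee\oplus\Qq'$. Writing a vector $X=\xi Z+e$ with $e\in\Ee$ as $X=\xi(Z-W)+(e+\xi W)$, this coordinate equals the $Z$-coefficient $\xi$ of $X$ in the basis $(Y_1,Y_2,Y_3,Z)$; this is the remark in the proof of Lemma~\ref{hh:lem:psi} that $\pi_{\Qq}$ and $\pi_{\Qq_\Psi}$ agree modulo $\Ee$.

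Take $B=Y_3\in\Ee$. By~\eqref{hh:eq:brackets},
\[
	[Y_3,Z-W]=2Z-aY_1-bY_2 .
\]
Its $Z$-coefficient is $2$, so $\kappa_{\Qq'}(Y_3)=2\neq0$ for every choice of $a,b,c$. As a cross-check with~\eqref{hh:eq:kappaPsi}: $\kaq(Y_3)=2$ by Lemma~\ref{hh:lem:example}, and the correction term $\tr\bigl(A\mapsto\Psi(\scr B(A,Y_3))\bigr)$ vanishes. Indeed $\scr B(A,Y_3)=\pi_{\Qq}[A,Y_3]=0$ for every $A\in\Ee$, because $[Y_1,Y_3]=-Y_1$, $[Y_2,Y_3]=-Y_2$ and $[Y_3,Y_3]=0$ all lie in $\Ee$. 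Hence no complement satisfies~\eqref{hh:eq:necessary}.

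For the consequence, recall from Theorem~\ref{hh:thm:counterexample}\ref{hh:it:cex1} that $F=\Id|_\Omega$ is a harmonic morphism with conformal factor $\lambda\equiv1$, so $\lambda(p)\neq0$ at every point. If $F$ were a normal harmonic map, Theorem~\ref{hh:thm:obstruction} would produce a complement $\Qq'$ with $\kappa_{\Qq'}|_{\Ee}=0$, contradicting what we just showed. Therefore no multiplier $\eta$ as in~\eqref{hh:eq:GMnormal} exists for $F$.

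I do not expect a real obstacle here. The only point requiring care is to check that the line parametrization $\R(Z-W)$ covers all complements, which holds because $\dim\Qq'=1$ and $\Qq'\cap\Ee=0$ forces a nonzero $Z$-component.
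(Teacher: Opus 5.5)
Your proof is correct and follows essentially the paper's route. Like you, the paper writes every complement as a graph $\Qq_\Psi$ over $\Qq=\R Z$ via Lemma~\ref{hh:lem:psi}, evaluates at $B=Y_3$, and concludes from the fact that $Y_3$ lies in the radical of the Levi form, so the correction term in~\eqref{hh:eq:kappaPsi} vanishes and $\kappa_{\Qq'}(Y_3)=\kaq(Y_3)=2\neq0$. This is exactly your cross-check; your direct computation of the $Z$-coefficient of $[Y_3,Z-W]$ is just a hands-on version of it, and you deduce the consequence from Theorem~\ref{hh:thm:obstruction} with $\lambda\equiv1$, as the paper does (the paper also notes it follows from Lemma~\ref{hh:lem:normalimplieshm} and Theorem~\ref{hh:thm:counterexample}\ref{hh:it:cex2}).
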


\begin{proof}
By Lemma~\ref{hh:lem:psi} it suffices to show that~\eqref{hh:eq:ALG} has no solution $\Psi\in\Hom(\Qq,\Ee)$, where $\Qq=\R Z$.
Set $P:=\Psi(Z)\in\Ee$ and write $\scr B(A,B)=\beta(A,B)Z$, where $\beta$ is the skew-symmetric bilinear form on $\Ee$ with $\beta(Y_1,Y_2)=1$ and $\beta(Y_3,\cdot)=0$.
The map $A\mapsto\Psi(\scr B(A,B))=\beta(A,B)P$ has trace $\beta(P,B)$, so~\eqref{hh:eq:ALG} reads
\begin{equation}
	\beta(P,B)=-\kaq(B)=-2Y_3^*(B)
	\qquad\forall B\in\Ee .
\end{equation}
Taking $B=Y_3$ gives $\beta(P,Y_3)=-2$, which is impossible because $Y_3$ belongs to the radical of $\beta$, so that $\beta(P,Y_3)=0$.
The last statement follows from Theorem~\ref{hh:thm:obstruction}, or directly from Lemma~\ref{hh:lem:normalimplieshm} and Theorem~\ref{hh:thm:counterexample}\ref{hh:it:cex2}.
\end{proof}

%
\subsection{Two questions}\label{hh:subs:questions}

The first question concerns the gap between the sufficient condition of Theorem~\ref{hh:thm:balancednormal}, that is $\chi=0$, and the necessary condition~\eqref{hh:eq:necessary} of Theorem~\ref{hh:thm:obstruction}, which is the first half of~\eqref{hh:eq:balanced} only.
Notice that in this intermediate case the residual unknown $\nu$ of~\eqref{hh:eq:multipliers} has to solve the underdetermined first-order linear system $(L^*_F\nu)|_{\Qq}=-\lambda^2\chi|_{\Qq}$.

\begin{question}\label{hh:q:gap}
Are there polarized Lie algebras $(\frk h,\Ee)$ that admit a complement $\Qq$ with $\kaq|_{\Ee}=0$ but no modularly balanced complement?
If so, are harmonic morphisms into the corresponding sub-Riemannian Lie groups normal harmonic maps?
\end{question}

The second question is suggested by Remark~\ref{hh:rem:why}: the horizontal energy~\eqref{hh:eq:energy} ignores the measure of the target, while harmonic morphisms do not.

\begin{question}\label{hh:q:functional}
Is there a natural modification of the horizontal energy, involving the Haar measure of the target, whose critical points among contact maps are exactly the maps satisfying $\Tt(F)=0$?
By~\eqref{hh:eq:firstvariationHM}, the required correction of the first variation is $-\int_\Omega\lambda^2\chi(\varphi)\did\vol_G$.
\end{question}

\appendix
\section{Sub-Riemannian submersions}\label{sec6a8e71d5}
\newcommand{\Span}{\operatorname{Span}}

The following lemma is a simple technical result in differential geometry.
\begin{lemma}\label{lem6a7d950c}
	Let $M,N$ be smooth manifolds, and let $X_1,\dots,X_m\in\Gamma(TM)$ and $Y_1,\dots,Y_n\in\Gamma(TN)$ be two families of vector fields 
	(not necessarily linearly independent).
	Let $\tilde V\subset TM$ and $\tilde W\subset TN$ be the sets
	\begin{equation}
		\tilde V = \bigsqcup_{x\in M} \Span_\R\{X_1(x),\dots,X_m(x)\}
		\text{ and }
		\tilde W = \bigsqcup_{y\in M} \Span_\R\{Y_1(y),\dots,Y_n(y)\} .
	\end{equation}
	Let $F:M\to N$ be a $C^\infty$-smooth map with
	\begin{equation}\label{eq6a7d9506}
		\diff F(x)[\tilde V|_x] = \tilde W|_{F(x)}
		\qquad\forall x\in M.
	\end{equation}
	If $\tilde W$ is a bracket generating subbundle of $TM$, 
	then $F$ is a submersion, that is
	\begin{equation}\label{eq6a8dba73}
		\diff F(p)[T_pM] = T_{F(p)}N
		\qquad\forall p\in M.
	\end{equation}
\end{lemma}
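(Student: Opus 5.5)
The plan is to show that the image of $\diff F(p)$ contains all iterated brackets of the $Y_a$ at $F(p)$. Since $\tilde W$ is bracket generating, these brackets span $T_{F(p)}N$. (The hypothesis should read that $\tilde W$ is bracket generating in $TN$.) Fix $p\in M$ and set $q=F(p)$. Let $\mathcal D\subset\Gamma(TN)$ be the set of vector fields $Y$ on $N$ with the following property: for every $x$ in a neighborhood of $p$, $Y(F(x))\in\diff F(x)[T_xM]$. More precisely, we want $Y$ to be \emph{locally $F$-related to a lift}: there is a smooth vector field $\hat Y$ near $p$ on $M$ with $\diff F(x)[\hat Y(x)]=Y(F(x))$ for all $x$ near $p$. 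If $Y,Y'$ admit lifts $\hat Y,\hat Y'$, then by naturality of the Lie bracket under $F$-relatedness, $[\hat Y,\hat Y']$ is $F$-related to $[Y,Y']$. So the set of fields admitting lifts near $p$ is closed under brackets. It is also a $C^\infty(N)$-module, since $(g\circ F)\hat Y$ lifts $gY$.

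The key step is constructing smooth lifts of the $Y_a$. By \eqref{eq6a7d9506}, for each $x$ the linear map $\R^m\to\tilde W|_{F(x)}$, $c\mapsto \sum_i c_i\,\diff F(x)[X_i(x)]$, is onto. Write $\diff F(x)[X_i(x)]=\sum_a g_{ia}(x)Y_a(F(x))$. Here I want smooth coefficients. If $\tilde W$ is a genuine subbundle of rank $r$, choose a local frame $W_1,\dots,W_r$ of $\tilde W$ near $q$ (a subset of the $Y_a$ at $q$, which remains a frame nearby). Then the coefficients $g_{ib}$ with respect to this frame are smooth near $p$, and the $r\times m$ matrix $g(x)$ has full rank $r$. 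A smooth right inverse $h(x)=g^T(gg^T)^{-1}$ exists near $p$, and $\hat W_b:=\sum_i h_{ib}X_i$ lifts $W_b$. Every $Y_a$ is a smooth combination of the $W_b$ near $q$, hence also lifts. This full-rank/right-inverse step is the one needing care: it is where smoothness of $F$ and constancy of rank enter. If $\tilde W$ is not of constant rank, I would instead work directly with brackets of the $W_b$ chosen at $q$ and argue at the single point, but the subbundle hypothesis makes this unnecessary.

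Finally, by closure under brackets, every iterated bracket $[W_{b_1},[\dots,W_{b_k}]]$ admits a lift near $p$. Evaluating at $x=p$ gives $[W_{b_1},[\dots]](q)\in\diff F(p)[T_pM]$. Since $\tilde W$ is bracket generating, these vectors span $T_qN$, which proves \eqref{eq6a8dba73}. The main obstacle is the smooth lifting step; the rest is the standard fact that $F$-relatedness is compatible with Lie brackets.
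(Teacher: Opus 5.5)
Your proof is correct and is essentially the paper's argument: you take a local frame of $\tilde W$ near $F(p)$ among the $Y_a$, build smooth $F$-related lifts as combinations of the $X_i$, and conclude from the compatibility of $F$-relatedness with brackets together with bracket generation; the only difference is the construction of the lifts, where you use the right inverse $g^T(gg^T)^{-1}$ while the paper selects $n$ of the $X_i$ whose images form a basis of $\tilde W|_{F(p)}$ and inverts a square system. Drop your aside about the non-constant-rank case, though, since the statement is false there, as the example $F(x,y)=(x,x^2y)$ in Remark~\ref{rem6a8e65a2} shows.
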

\begin{proof}
	Fix $p\in M$. 
	Up to taking an open neighborhood of $F(p)$ in $N$ and selecting a subset of $\{Y_1(y),\dots,Y_n(y)\}$, we assume that $Y_1(y),\dots,Y_n(y)$ are linearly independent for every $y\in N$.
	By~\eqref{eq6a7d9506}, up to reshuffling, we assume
	\begin{equation}\label{eq6a8e6752}
		\tilde W|_{F(p)} = \Span_{\R} \{\diff F(p)[X_1(p)],\dots,\diff F(p)[X_n(p)]\} .
	\end{equation}
	This implies that $\{X_1,\dots,X_n\}$ are linearly independent in a neighborhood of $p$, and, up to shrinking $M$, we assume this neighborhood is $M$.
	By~\eqref{eq6a7d9506}, there are $A_i^k\in C^\infty(M)$ so that
	\begin{equation}\label{eq6a8def7c}
		Y_i(F(x)) = \sum_{k=1}^n A_i^k(x) \diff F(x)[X_k(x)] 
		= \diff F(x)\left[ \sum_{k=1}^n A_i^k(x) X_k(x) \right] 
		\qquad\forall i\in\{1,\dots,n\}.
	\end{equation}
	Let $A_i := \sum_{k=1}^m A_i^k X_k \in\Gamma(TM)$ for all $i\in\{1,\dots,n\}$.
	The identity~\eqref{eq6a8def7c} means that $A_i$ and $Y_i$ are \emph{$F$-related}, as in~\cite[p.~119]{MR1666820} or~\cite[p.~182]{MR2954043}.
	
	
	Let $\frk f_n$ be the free Lie algebra with $n$ generators $f_1,\dots,f_n$.
	Then we have two morphisms of Lie algebras $\phi_M:\frk f_n\to\Gamma(TM)$ and $\phi_N:\frk f_n\to\Gamma(TN)$ so that $\phi_M(f_j)=A_j$ and $\phi_N(f_j) = Y_j$, for every $j\in\{1,\dots,n\}$.
	By~\cite[p~.119]{MR1666820} or~\cite[Prop.~8.30]{MR2954043}, for every $v\in\frk f_n$, $\phi_Mv$ and $\phi_Nv$ are $F$-related,
	i.e.,
	\begin{equation}\label{eq6a8e6bde}
		(\phi_Nv)(F(x)) = \diff F(x)[(\phi_Mv)(x)] 
		\qquad\forall x\in M,\ \forall v\in\frk f_n.
	\end{equation}
	
	Since $\tilde W$ is bracket-generating, $\phi_N(\frk f_n)|_{F(p)} = T_{F(p)}N$. 
	From \eqref{eq6a8e6bde}, we conclude $\diff F(p)[T_pM] = T_{F(p)}N$.

\end{proof}

\begin{remark}\label{rem6a8e65a2}
	Lemma~\ref{lem6a7d950c} requires $\tilde W$ to be a subbundle, that is, $y\mapsto\dim(\tilde W|_y)$ is constant.
	This hypothesis is necessary, as the following example shows.
	Consider $M=N=\R^2$, with the vector fields
	\begin{equation}
	\begin{aligned}
		X_1 = \de_x , &&&X_2 = \de_y ,\\
		Y_1 = \de_x, &&&Y_2 = x\de_y .
	\end{aligned}
	\end{equation}
	Notice that $\{Y_1,Y_2\}$ is bracket generating.
	The map $F:M\to N$, $F(x,y) := (x,x^2y)$, does satisfy~\eqref{eq6a7d9506}, but not~\eqref{eq6a8dba73} on $\{x=0\}$.
\end{remark}

\begin{remark}
	The $C^\infty$ smoothness of $F$ is required in~\eqref{eq6a8e6bde}.
	If $\tilde W$ has step $s$, we can use~\eqref{eq6a8e6bde} only for $v\in\frk f_n$ up to step $s$,
	and thus we only need $F$ to be $C^s$-smooth.
\end{remark}

\printbibliography
\end{document}